\newcommand{\rt}{\rightarrow}
\newcommand{\lrt}{\longrightarrow}
\newcommand{\st}{\stackrel}
\newcommand{\La}{\Lambda}
\newcommand{\CA}{\mathcal{A} }
\newcommand{\CG}{\mathcal{G} }
\newcommand{\CN}{\mathcal{N} }
\newcommand{\CP}{\mathcal{P} }
\newcommand{\CQ}{\mathcal{Q} }
\newcommand{\CS}{\mathcal{S} }
\newcommand{\CT}{\mathcal{T} }
\newcommand{\CX}{\mathcal{X} }
\newcommand{\CY}{\mathcal{Y} }
\newcommand{\CB}{\mathcal{B} }
\newcommand{\Mod}{{\rm{Mod\mbox{-}}}}
\newcommand{\mmod}{{\rm{{mod\mbox{-}}}}}
\newcommand{\mmodd}{{\rm{mod}}_0\mbox{-}}
\newcommand{\Prj}{{\rm{Prj}\mbox{-}}}
\newcommand{\prj}{{\rm{prj}\mbox{-}}}
\newcommand{\GPrj}{{\GP}\mbox{-}}
\newcommand{\Gprj}{{\Gp\mbox{-}}}
\newcommand{\GInj}{{\GI \mbox{-}}}
\newcommand{\op}{{\rm{op}}}
\newcommand{\GP}{{\rm{GPrj}}}
\newcommand{\Gp}{{\rm{Gprj}}}
\newcommand{\GI}{{\rm{GInj}}}
\newcommand{\Coker}{{\rm{Coker}}}
\newcommand{\Ker}{{\rm{Ker}}}
\newcommand{\Tor}{{\rm{Tor}}}
\newcommand{\Hom}{{\rm{Hom}}}
\newcommand{\Ext}{{\rm{Ext}}}
\theoremstyle{plain}
\newtheorem{theorem}{Theorem}[section]
\newtheorem{corollary}[theorem]{Corollary}
\newtheorem{lemma}[theorem]{Lemma}
\newtheorem{proposition}[theorem]{Proposition}
\newtheorem{notation}[theorem]{Notation}
\theoremstyle{definition}
\newtheorem{definition}[theorem]{Definition}
\newtheorem{example}[theorem]{Example}
\newtheorem{construction}[theorem]{Construction}
\newtheorem{remark}[theorem]{Remark}
\theoremstyle{plain}
\theoremstyle{definition}
\numberwithin{equation}{section}
\begin{document}

\title[ Auslander-Reiten duality for subcategories]{ Auslander-Reiten duality for subcategories}

\author[Rasool Hafezi ]{Rasool Hafezi }

\address{School of Mathematics, Institute for Research in Fundamental Sciences (IPM), P.O.Box: 19395-5746, Tehran, Iran}
\email{hafezi@ipm.ir}

\subjclass[2010]{ 18A25, 16G10,  16B50, 16G50, 18G25}

\keywords{finitely presenetd functor, stable category, Auslander-Reiten duality, (co)resolving subcategory, Gorenstein projective module}


\begin{abstract}

Auslander-Reiten duality for module categories is generalized to some  sufficiently nice subcategories. In particular, our consideration works for $\mathcal{P}^{< \infty}(\Lambda)$, the subcategory consisting of finitely generated modules with finite projective dimension over an artin algebra $\Lambda$, and also, the subcategory of Gorenstein projective modules of $\rm{mod}\mbox{-} \Lambda$, denoted by $\rm{Gprj}\mbox{-} \Lambda.$ In this paper, we give a method to compute the Auslander-Reiten translation in $\mathcal{P}^{< \infty}(\Lambda)$ whenever $\Lambda$ is a $1$-Gorenstein algebra. In addition, we characterize when the Auslander-Reiten translation in $\rm{Gprj}\mbox{-} \Lambda$ is the first syzygy and provide many algebras having such property.  

\end{abstract}

\maketitle
\section{Introduction}
The classical Auslander-Reiten duality for modules over an artin algebra $\La$ says that
\[D\underline{\Hom}_{\La}(X, Y)\simeq \Ext_{\La}^1(Y, D\rm{Tr}(X))\]
for $M$ and $N$ in $\mmod \La$. Here, $D$ denotes the duality over a fixed commutative artin  ground ring $k$ and $\rm{Tr}$ denotes the transpose construction. It was established by Auslander and Reiten \cite{AR2}  and later generalized to modules
over arbitrary rings \cite{Au}. Recently, Auslander-Reiten duality for module categories has been  generalized by Henning Krause to Grothendieck abelian categories that have a sufficient supply of finitely presented objects \cite{K}. Furthermore,   an Auslander-Reiten formula for the setting of functorially finite subcategories coming from cotilting modules was given by  K. Erdmann et al \cite{EMM}.

The aim of this paper is to establish  an Auslander-Reiten duality for  sufficiently nice subcategories of $\mmod \La.$ More precisely, we prove the following results in this paper. 

\begin{theorem}\label{IndTheorem}
	\hspace{-1mm}
\begin{itemize}	
	\item[$(i)$]Let $\CX \subseteq \mmod \La$ be resolving-coresolving,  functorially finite and closed under direct summand. Then $\CX$ has  Auslander-Reiten duality, i.e, there exist an equivalence functor $\tau_{\CX}:\underline{\CX} \rt \overline{\CX}$  such that for any $X$ and $Y$ in $\CX$, we have the following natural isomorphism in both variables;
	\[D\underline{\Hom}_{\La}(X, Y)\simeq \Ext_{\La}^1(Y, \tau_{\CX}(X)).\]
	
	\item[$(ii)$] Assume that $\rm{Gprj}\mbox{-} \La$ is contravariently finite subcategory in $\mmod \La.$ Then $\rm{Gprj}\mbox{-} \La$ has Auslander-Reiten duality,  i.e. there exist  an equivalence functor $\tau_{\mathcal{G}}:\underline{\rm{Gprj}}\mbox{-} \La \rt \underline{\rm{Gprj}}\mbox{-} \La$  such that for any Gorenstein projective modules $G$ and $G'$, we have the following natural isomorphism in both variables;
	\[D\underline{\Hom}_{\La}(G, G')\simeq \Ext_{\La}^1(G', \tau_{\mathcal{G}}(G)).\]
	
\end{itemize}
\end{theorem}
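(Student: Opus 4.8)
The plan is to deduce the duality from the existence of almost split sequences inside $\CX$, together with the Auslander--Reiten functional, all carried out relative to the ambient category $\mmod\La$. \emph{First}, I would extract the structural consequences of the hypotheses. Since $\CX$ is resolving it contains $\mathrm{proj}\,\La$ and is closed under kernels of epimorphisms, and a one-line argument (apply $\Ext^1_{\La}(P,-)$ to $0 \to \Omega P \to Q \to P \to 0$ with $Q$ projective and $\Omega P \in \CX$) shows that the $\Ext$-projective objects of $\CX$ are exactly the projectives; dually, using coresolving, the $\Ext$-injectives are exactly the injectives. Hence $\underline{\CX}$ and $\overline{\CX}$ are genuinely the quotients of $\CX$ modulo maps factoring through $\mathrm{proj}\,\La$ and through $\mathrm{inj}\,\La$, and closure under summands makes $\CX$ a Krull--Schmidt category. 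Being functorially finite and extension-closed, $\CX$ then has almost split sequences by the theorem of Auslander--Smal\o{}: for every indecomposable non-projective $X \in \CX$ there is an almost split sequence $0 \to Z \to E \to X \to 0$ with all terms in $\CX$. I define $\tau_{\CX}(X) := Z$ on indecomposables and extend additively.

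\emph{Second}, I would verify that $\tau_{\CX}$ is a well-defined equivalence $\underline{\CX} \to \overline{\CX}$. This is the usual bookkeeping of Auslander--Reiten theory transported to $\CX$: the left-hand terms of almost split sequences are exactly the indecomposable non-injective objects, so $\tau_{\CX}$ is a bijection on indecomposables, the almost split sequences on the injective side furnish a quasi-inverse $\tau_{\CX}^{-1}$, and functoriality follows because each morphism lifts uniquely modulo the relevant radical. The real content is the formula, which I would obtain by producing a natural perfect pairing. Fix an indecomposable non-projective $X$ with almost split sequence $\eta_X \in \Ext^1_{\La}(X,\tau_{\CX}X)$, and for $Y \in \CX$ consider
\[\underline{\Hom}_{\La}(X,Y) \times \Ext^1_{\La}(Y,\tau_{\CX}X) \longrightarrow k,\qquad (g,\xi) \longmapsto \gamma\big(g^{*}\xi\big),\]
where $g^{*}\xi \in \Ext^1_{\La}(X,\tau_{\CX}X)$ is the pullback of $\xi$ along $g$ and $\gamma$ is the Auslander--Reiten functional, the canonical $k$-linear form on $\Ext^1_{\La}(X,\tau_{\CX}X)$ induced by the local ring $\End_{\La}(X)$ that is nonzero precisely on the class of $\eta_X$. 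Note that this descends to $\underline{\Hom}_{\La}(X,Y)$, since a map $g$ factoring through a projective yields $g^{*}\xi=0$.

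\emph{Third}, it remains to check that the resulting natural transformation $\theta_Y \colon \Ext^1_{\La}(Y,\tau_{\CX}X) \to D\underline{\Hom}_{\La}(X,Y)$, $\theta_Y(\xi)(g)=\gamma(g^{*}\xi)$, is a natural isomorphism. Naturality in $Y$ is formal from functoriality of pullback, while naturality in $X$ requires transporting $\gamma$ coherently along $\tau_{\CX}$ and is where the equivalence of the previous step enters. Bijectivity is the crux, and I would reduce it to the defining universal property of $\eta_X$: that $\tau_{\CX}X \to E$ is left almost split and $E \to X$ is right almost split, which is exactly what makes $\gamma(g^{*}(-))$ detect the nonzero classes in $\underline{\Hom}_{\La}(X,Y)$ and conversely forces a $\gamma$-trivial extension to split. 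Concretely I would check $\theta$ is an isomorphism by evaluating it on the almost split sequence itself and then propagating to arbitrary $Y$ using functorial finiteness; reformulating in the functor category $\mmod\,\CX$ streamlines this, since there $\eta_X$ is the minimal projective presentation of the simple functor $S_X$ and both sides become the $k$-dual of $S_X$, so bijectivity is the statement that minimal projective presentations compute $\Ext^1$. \textbf{The main obstacle} I anticipate is precisely the compatibility question: ensuring that $\Ext^1_{\La}$ computed in the ambient module category agrees with the relative structure on $\CX$ throughout, so that $\gamma$ and the pairing stay perfect after passing to the subcategory.

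\emph{Finally}, part $(ii)$ is \emph{not} a special case of $(i)$, since $\Gp\mbox{-}\La$ need not be coresolving, so I would treat it through its Frobenius structure. As $\Ext^{>0}_{\La}(G,\La)=0$ for Gorenstein projective $G$, the projectives are simultaneously the relative projectives and the relative injectives of $\Gp\mbox{-}\La$, so this is a Frobenius exact category, $\underline{\Gp}\mbox{-}\La = \overline{\Gp}\mbox{-}\La$ is triangulated with shift $\Omega^{-1}$, and there is a natural identification $\Ext^1_{\La}(G',H)\cong \underline{\Hom}_{\La}(G',\Omega^{-1}H)$. Under contravariant finiteness the approximations needed to build almost split triangles exist, and the asserted isomorphism becomes the statement that $\tau_{\mathcal{G}}$ shifted by $\Omega^{-1}$ is a Serre functor on the Hom-finite Krull--Schmidt triangulated category $\underline{\Gp}\mbox{-}\La$; the pairing argument of the previous paragraphs then applies almost verbatim, with pullbacks of extensions replaced by the connecting maps of triangles.
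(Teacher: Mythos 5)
Your strategy is genuinely different from the paper's and, as written, has a gap at the step you yourself call ``the crux.'' The paper never uses almost split sequences in part $(i)$: it composes two explicit dualities of functor categories, $D\cong\Phi_1\colon \underline{\CX}\mbox{-}\rm{mod}\to\mmod\underline{\CX}$ (from the dualizing $k$-variety structure of $\underline{\CX}$, Theorem \ref{StableDualizing} and Construction \ref{Constructionsec}) and $\Phi\colon\mmod\underline{\CX}\to\overline{\CX}\mbox{-}\rm{mod}$ (from the submodule-category equivalences of Section 3), and reads off $\tau_{\CX}$ on representables. In your pairing argument the almost split sequence $\eta_X$ only gives non-degeneracy in the \emph{extension} variable: if $\xi\in\Ext^1_{\La}(Y,\tau_{\CX}X)$ is non-split, its middle term lies in $\CX$ by extension closure, so the left almost split map $\tau_{\CX}X\to E$ produces $g$ with $g^{*}\xi=\eta_X$ and $\gamma(g^{*}\xi)\neq0$; this yields an injection $\Ext^1_{\La}(Y,\tau_{\CX}X)\hookrightarrow D\underline{\Hom}_{\La}(X,Y)$, i.e.\ one inequality of lengths. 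Non-degeneracy in the other variable --- that every $g$ not factoring through a projective is detected by some $\xi$ --- does \emph{not} follow from the almost split sequence. It is equivalent to $\Ext^1_{\La}(-,\tau_{\CX}X)\vert_{\CX}$ being an injective object of $\mmod\underline{\CX}$ with simple socle, and that injectivity is exactly what the paper's dualities supply. Your proposed fix (``both sides become the $k$-dual of $S_X$'') is not correct: the almost split sequence exhibits the simple functor $S_X$ as a subfunctor of $\Ext^1_{\La}(-,\tau_{\CX}X)\vert_{\CX}$ and as the socle of the injective functor $D\underline{\Hom}_{\La}(X,-)$, but identifying the two ambient functors is the whole content of the theorem; this is precisely why Lenzing--Zuazua (cited in the paper) treat existence of almost split sequences and existence of the duality as distinct conditions. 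A secondary gap: functoriality of $X\mapsto\tau_{\CX}X$ as an equivalence $\underline{\CX}\to\overline{\CX}$ is asserted via ``unique lifting modulo the radical,'' but uniqueness modulo injectives of the induced maps also needs proof and is normally deduced \emph{from} the duality, not conversely.

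Two remarks on repair. For $(i)$, a viable almost-split route goes through approximations rather than the pairing: take a minimal right $\CX$-approximation of $\tau_{\La}(M)$ and transport the classical formula $\Ext^1_{\La}(-,\tau_{\La}M)\cong D\underline{\Hom}_{\La}(M,-)$ to $\CX$ via Lemma~\ref{Applic}; this is what the paper does in Theorem~\ref{1-Gortrans} for $\CP^{<\infty}(\La)$, but it presupposes the classical Auslander--Reiten duality in $\mmod\La$ rather than reproving it. For $(ii)$, your reduction to a Serre-functor statement on the triangulated category $\underline{\rm{Gprj}}\mbox{-}\La$ is the right idea and matches the paper, but ``the pairing argument applies almost verbatim'' inherits the same gap; you would need either Reiten--Van den Bergh's theorem that Auslander--Reiten triangles yield a Serre functor, or, as the paper does, Chen's generalized Serre duality for the dualizing $k$-variety $\underline{\rm{Gprj}}\mbox{-}\La$ obtained from Theorem~\ref{StableDualizing}.
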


Note that here stabe-Hom (modulo projective modules) and  $\Ext^1$ are taken in the original category, that is, $\mmod \La.$ The uniquely determined up to isomorphism equivalence for the corresponding subcategory in above theorem is called relative Auslander-Reiten translation functor in the  subcategory. We refer to Theorem \ref{AR-duality} and Theorem \ref{tranGprj} for the proof of the theorem.
 
In Example \ref{Examplefirst}, we provide many subcategories satisfying the requested  condition in the first part of theorem. One of the such important   examples is the subcategory of modules with finite projective dimension over artin algebra $\La$,  $\CP^{<\infty}(\La)$. In fact, the subcategory of $\mmod \La$ consisting of  Gorenstein projectove modules, $\rm{Gprj}\mbox{-} \La,$ see \ref{gproj} for definition, is related to $\CP^{<\infty}(\La)$ via a cotorsion theory whenever $\La$ is a Gorenstein algebra. So, in some special cases we can see some connection between subcategories appeared in the first part and the second part of the theorem.

 It is worth noting that there is a close relation between the existence of almost split sequence and the existence of the Auslander-Reiten duality. For abelian case, we refer to  a paper by Lenzing and Zuazua \cite{ LZ}. Recently, in \cite{LNP} a local version of the main result stated in \cite{LZ} was given in the setting of exact categories. There are several works to investigate the existence of almost split sequences in subcategories, but, not much works about the existence of Auslander-Reiten dulaity for subcatgories. This paper is an attempt for this missing part.
  
To prove the first part of above theorem, we in more general setting extend an equivalence due to Claus M.Ringel and Pu Zhang \cite{RZ} for the case of $\La=K[x]/(x^n)$, and recently generalized  by \"{O}gmundur Eiriksson \cite{E} to self injective algebras of finite type.

\begin{theorem}\label{Zeroequivalence}
Let $\CA$ be an abelian category with enough projectives.
Let $\CX$ be a subcategory of $\CA$ including $\rm{Proj} \mbox{-}\CA$, being contravariantly finite and closed under kernel of epimorphisms. Consider the full subcategory $\mathcal{V}$ of $S_{\CX}(\CA)$ formed by  finite direct sums of   objects in the form of $(X \st{\rm{Id}}\rt X )$ or $ (0 \rt X )$, that $X$ runs through  all of objects in $\CX$. Then the functor $\Psi$, defined in Construction \ref{FirstCoonstr}, induces an equivalence of categories
$$ S_{\CX}(\CA)/ \mathcal{V} \simeq \mmod \underline{\CX}$$
\end{theorem}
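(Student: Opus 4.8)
The plan is to exhibit $\Psi$ as a functor that is dense, full, and whose induced map on morphism groups has kernel exactly the ideal of maps factoring through $\mathcal{V}$; then the induced functor $\overline{\Psi}: S_{\CX}(\CA)/\mathcal{V}\rt \mmod\underline{\CX}$ is fully faithful and essentially surjective, hence an equivalence. Recall that on an object of $S_{\CX}(\CA)$, presented by its defining short exact sequence $0\rt A\rt B\st{\pi}{\rt} C\rt 0$ with $A,B\in\CX$, the functor $\Psi$ returns the cokernel of the natural transformation $\underline{\Hom}_{\CA}(-,B)\rt \underline{\Hom}_{\CA}(-,C)$ of functors on $\underline{\CX}$. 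The first thing I would check is that this actually lands in $\mmod\underline{\CX}$. Since $\CX$ is contravariantly finite, $C$ admits a right $\CX$-approximation $X_0\rt C$, and this is an epimorphism because $\mathrm{Proj}\mbox{-}\CA\subseteq\CX$ forces every projective cover of $C$ to factor through it; consequently $\underline{\Hom}(-,X_0)\twoheadrightarrow \underline{\Hom}(-,C)|_{\underline{\CX}}$ and $\Psi(A\hookrightarrow B)$ is a quotient of a representable functor. Applying the same approximation argument to the relevant kernel exhibits a finite presentation of $\Psi(A\hookrightarrow B)$ by representables, so it is genuinely finitely presented.

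Next I would record that $\Psi$ annihilates the generators of $\mathcal{V}$: for $(X\st{\mathrm{Id}}{\rt}X)$ the cokernel $C$ is $0$, and for $(0\rt X)$ the map $B\rt C$ is the identity $\mathrm{Id}_X$, so in both cases $\underline{\Hom}(-,B)\rt \underline{\Hom}(-,C)$ is respectively the zero or an isomorphism and its cokernel vanishes. Hence $\Psi$ factors through the quotient, yielding $\overline{\Psi}$. For density, start from an arbitrary $F\in\mmod\underline{\CX}$ with a presentation $\underline{\Hom}(-,X_1)\st{g_*}{\rt}\underline{\Hom}(-,X_0)\rt F\rt 0$; lift the stable map to a morphism $g: X_1\rt X_0$ in $\CA$. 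Using $\mathrm{Proj}\mbox{-}\CA\subseteq\CX$, choose a projective epimorphism $P\twoheadrightarrow X_0$ and replace $g$ by $(g,p): X_1\oplus P\rt X_0$; since $\underline{\Hom}(-,P)=0$ this does not change the induced map of functors, so we may assume $g$ is an epimorphism. Closure of $\CX$ under kernels of epimorphisms then gives $A:=\Ker g\in\CX$, and $(A\hookrightarrow X_1\oplus P)$ is an object of $S_{\CX}(\CA)$ with $\Psi(A\hookrightarrow X_1\oplus P)\cong F$. This step is where all three structural hypotheses on $\CX$ enter simultaneously.

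For fullness I would take a natural transformation $\phi:\Psi(\eta)\rt \Psi(\eta')$ and lift it. Using the representable presentations of source and target constructed above, projectivity of the representable functors lets me lift $\phi$ to a map of presentations; by the Yoneda lemma the lifted data are (stable, hence liftable) morphisms in $\CA$ between the terms $X_i$, and I would reassemble these into an honest morphism of short exact sequences $\eta\rt \eta'$, correcting the ambiguity coming from maps through projectives (which is harmless after passing to $\underline{\Hom}$). Tracking the construction shows $\Psi$ of this morphism recovers $\phi$, so $\Psi$, and therefore $\overline{\Psi}$, is full.

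The crux, and the step I expect to be the main obstacle, is faithfulness of $\overline{\Psi}$, i.e. showing that a morphism $f:\eta\rt \eta'$ with $\Psi(f)=0$ necessarily factors through an object of $\mathcal{V}$. Here one must characterize the kernel of $\Psi$ on Hom-groups precisely: the vanishing of the induced map on cokernel functors translates, via Yoneda applied to the representable presentation, into the statement that the cokernel component $C\rt C'$ of $f$ becomes null after composing with $B'\twoheadrightarrow C'$ modulo projectives. From this I would build an explicit factorization of $f$ through a finite direct sum of copies of $(X\st{\mathrm{Id}}{\rt}X)$ and $(0\rt X)$, using the lifting of the trivialising homotopy along $B'\rt C'$ together with closure of $\CX$ under summands to keep all the intermediate objects inside $\CX$. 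The delicacy is entirely in managing the stable structure (maps factoring through projectives) consistently across the two distinct types of trivial objects; once this factorization is produced, $\overline{\Psi}$ is fully faithful and dense, and the asserted equivalence $S_{\CX}(\CA)/\mathcal{V}\simeq \mmod\underline{\CX}$ follows.
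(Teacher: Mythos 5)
Your proposal follows essentially the same route as the paper's proof: you verify that $\Psi$ is dense, full, and that a morphism killed by $\Psi$ factors through $\mathcal{V}$ (the paper's ``objectivity''), and the data you propose to extract from $\Psi(\sigma)=0$ by Yoneda at $\mathrm{id}_{\rm{Coker}(f)}$ --- a lift $t\colon \rm{Coker}(f)\rt B'$ of $\sigma_3$ through $g'$, from which $s\colon B\rt A'$ with $\sigma_2=f's+tg$ and $\sigma_1=sf$ follows --- is exactly the null-homotopy the paper reads off from the chain map of projective resolutions, yielding the same factorization through $(A'\st{\rm{Id}}\rt A')\oplus(0\rt B')$. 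The only caveats are cosmetic: your appeal to closure of $\CX$ under direct summands is not among the hypotheses and is not needed (the intermediate object is built from $A'$ and $B'$, which already lie in $\CX$), and the finite-presentation check in your first paragraph is automatic because $\rm{Coker}(f)\in\CX$ by the definition of $S_{\CX}(\CA)$, so contravariant finiteness is only needed to make $\mmod\CX$ abelian, not to present $\Psi(f)$ by representables.
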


For the definition of $S_{\CX}(\CA)$ and  proof of  the above theorem, we refer to the subsection  \ref{Firstconstuction}.\\
Among application of such generalization to prove one part of Theorem \ref{IndTheorem}, it has other interesting applications. For instance, as a immediate consequence of this generalization together with results  \cite[Theorem 3.6 ]{Cr} due to Simon Crawford, we can obtain the following  connections between
Kleinian singularities  and preprojective algebras, 
$$\Gprj T_2(R_Q)/ \mathcal{V} \simeq \mmod \Pi(\CQ)$$
where $R_{\CQ}$ is the coordinate ring of the corresponding Kleinian singularity for the Dynkin quiver $\CQ$, and $\Pi(\CQ)$ is the preprojective algebra of $\CQ.$
Also, as an another interesting application
 we consider a relative version of the following conjecture posted by Auslander:\\
{\bf Conjecture:} Suppose $\CA$ is an abelian category of enough projectives, $A$ is an object of $\CA,$ and $F$ is a direct summand of $\Ext^1(A, -)$. Then the question is whether $F$ is of the form $\Ext^1(B, -)$ for some $B$ in $\CA.$ \\
 In Theorem \ref{AusConj} and Proposition \ref{AUSGPrj} are given  a relative version of the above conjecture for the subcategories appeared in Theorem \ref{IndTheorem}.
  
In spite of our results for the  existence of Auslander-Reiten duality for subcategories, we will then try to compute the uniquely determined equivalence (up to isomorphism), $\tau_{\CX}$, for some certain cases, as follows. 
\begin{itemize}
\item[$\mbox{-}$] Let $\La$ be a $1$-Gorenstein algebra. Then $\tau_{\CP^{<\infty}(\La)}$ is computed in Theorem \ref{1-Gortrans} by the help of a result due to C.M Ringel and M. Schmidmeier  \cite[Proposition 2.4]{RS3}.  
\item[$\mbox{-}$] In Theorem \ref{CHArecterzationSemi}, we investigate when $\tau_{\mathcal{G}}$ is isomorphic to the first  syzygy functor. In addition, we collect in Example \ref{SgTran} some computations for $\tau_{\CG}$ has been done by others. 
\end{itemize}

Recently studding $1$-Gorenstein algebras has been attracted more attention in representation theory of artin algebras. In particular, the class of $1$-Gorenstein algebras attached to symmetrizable cartan matrices defined in \cite{GLS}. In such $1$-Gorenstein algebras, the  modules with finite projective dimension, also called locally free modules in there, play an important role.

This paper is organized as follows. In section 2, we recall some basic definitions and facts, and introduce the notations used throughout the paper. 
 The section 3 are divided into three subsection. In this section we will prove several   equivalences between  the quotient categories coming from morphism categories and the categories of finitely presented functors on the stable categories, including the  proof of Theorem \ref{Zeroequivalence}. Moreover, in there, a relative version  of the Auslander's conjecture will be given. Section 4 is devoted to give a proof for the first part of Theorem \ref{IndTheorem} and also to compute $\tau_{\CP^{<\infty}(\La)}$ when $\La$ is a $1$-Gorenstein algebra. In section 5  we will restrict our attention  to the subcategory $\Gprj \La$  because of  their importance in the homological and representation-theoretic structure of an artin algebra.  In this section, by applying our  functorial approach, we  characterize  when $\mmod (\underline{\rm{Gprj}} \mbox{-} \La)$ is a semisimple abelian category in terms of the Auslander-Reiten translation $\tau_{\mathcal{G}}$ in $\Gprj \La$. In particular we show it happens if  $\tau_{\mathcal{G}}$  is the first syzygy functor and providing many algebras having this property.
 
\begin{notation} If $R$ is  a ring with unity, then  $\Mod R$ denotes the category of all right $R$-modules and $\mmod R$ denotes its full
	subcategory consisting of all finitely presented modules. Moreover, $\Prj R$, resp. $\prj R$, denotes the full subcategory of $\Mod R$, resp. $\mmod R$, consisting of projective, resp. finitely generated projective, modules. Throughout the paper, $\Lambda$ denotes an artin algebra over a commutative artinian ring $k.$
\end{notation}

\section{Preliminaries}
We now recall some of the definitions and results that we will make use of throughout this
paper.
\subsection{Functor category}
Let $\CA$ be an  additive skeletally small category. The Hom sets will be denoted either by $\Hom_{\CA}( - , - )$, $\CA( - , - )$ or even just $( - , - )$, if there is no risk of ambiguity. Let $\CX$ be a full subcategory of $\CA$. In this paper we assume all  subcategories is full, closed under isomorphic of objects, i.e. if $X \in \CX$ and $X \simeq Y$ in $\CA$, then $Y \in \CX,$ and closed under finite direct sums. By definition, a (right) $\CX$-module is an additive functor $F:\CX^{\op} \rt \CA b$, where $\CA b$ denotes the category of abelian groups and $\CX^{\op}$ is the opposite category of $\CX$. The $\CX$-modules and natural transformations between them, forms an abelian category denoted by $\Mod\CX$ or sometimes $(\CX^{\op},\CA b)$. In the literature an object in $\rm{Mod}\mbox{-} \CX$ is also called a contravariant functor from $\CX$ to $\CA b $.  An $\CX$-module $F$ is called finitely presented if there exists an exact sequence $$\CX( - ,X) \rt \CX( - ,X') \rt F \rt 0,$$ with $X$ and $X'$ in $\CX$. All finitely presented $\CX$-modules form a full subcategory of $\Mod  \CX$, denoted by $\mmod \CX$ or sometimes $\rm{f.p.}(\CX^{op}, \CA b)$. A left $\CX$-module defined as a right $\CX^{\op}$-module, or a (covariant) functor from $\CX$ to $\CA b$. The category of left $\CX$-modules, or right $\CX^{\op}$-module,  and its  full subcategory consisting of finitely presented (left) $\CX$-modules, or  finitely presented right $\CX^{\op}$-module, will be denoted by $\CX$-Mod and $\CX$-${\rm mod}$, respectively. We sometimes use  notations $\Mod \CX^{\op}$ and  $\mmod \CX^{op}$, respectively to show them. Note that when we only  deal with $\mmod \CA$, there is no need to assume that $\CA$ is skeletally small category as there is no worry about set theoretic problem.

The Yoneda embedding $\CX \hookrightarrow \mmod\CX$, sending each $X \in \CX$ to $\CX( - ,X):=\CA( - ,X)\vert_{\CX}$,
is a fully faithful functor. Note that for each $X \in \CX$, $\CX( - ,X)$ is a projective object of $\mmod\CX$. Moreover, if $\CX$ has split idempotents, then every projective object of $\mmod\CX$ is  as  $\CX( - ,X)$, for some $X \in \CX$. 

A morphism $X \rt Y$ is a weak kernel of a morphism $Y \rt Z$ in $\CX$ if the induced sequence
$$( - , X)  \lrt ( - , Y)  \lrt ( - , Z)$$
is exact on $\CX$. It is known that $\mmod\CX$ is an abelian category if and only if $\CX$ admits
weak kernels, see e.g. \cite[Chapter III, \S 2]{Au2}.\\
Let $A \in \CA$. A morphism $\varphi : X \rt A$ with $X \in \CX$ is called a right $\CX$-approximation of $A$ if $\CA( - , X)\vert_{\CX} \lrt \CA( - ,A)\vert_{\CX} \lrt 0$ is exact, where
$\CA( - ,A) \vert_{\CX}$ is the functor $\CA( - ,A)$ restricted to $\CX$. Hence $A$ has a right $\CX$-approximation if and only if $( - ,A)\vert_{\CX}$ is a finitely generated objects of $\Mod\CX$. $\CX$ is called contravariantly finite if every object of $\CA$ admits a right $\CX$-approximation. Dually, one can define the notion of left $\CX$-approximations and covariantly finite subcategories. $\CX$ is called functorially finite, if it is both covariantly and contravariantly finite.

It is obvious that if $\CX$ is contravariantly finite, then it admits weak kernels and hence $\mmod\CX$ is an abelian category.

\subsection{Stable categories}\label{stableC}
Let $\CA$ be an abelian category with enough projective objects. Let $\CX$ be a subcategory
of $\CA$ containing $\Prj\CA$.
The stable category of $\CX$ denoted by $\underline{\CX}$ is a category whose objects are the same as those of
$\CX$, but the hom-set ${\underline{\CX}} (\underline{X}, \underline{X}')$ of $X, X' \in \CX$
is defined as $\underline{\CX}(\underline{X}, \underline{X}'):= \frac{{\CA}(X, X')}{\CP(X, X')}$, where $\CP(X, X')$
consists of all morphisms from $X$ to $X'$ that factor through a projective object. we usually use underline for objects to emphasize that we are  considering  them as an object in the  stable category. We have the canonical functor $\pi:\CX \rt \underline{\CX}$,
defined as identity on the objects but morphism $f:X\rt Y$ will be sent to the residue class $\underline{f}:=f+{\CP}(X, X').$\\

As defined above, let $\pi:\CX \rt \underline{\CX}$ be the canonical functor. It then induces an exact functor $\mathfrak{F}: \Mod\underline{\CX} \rt \Mod\CX$. It is not hard to see that $\mathfrak{F}$ in turn induces an equivalence between $\Mod\underline{\CX}$ and whose image $\rm{Im}\mathfrak{F}$ which is the full subcategory of $\Mod\CX$ consisting of those functors vanishing on $\Prj\CA$. We denote the image of $\mathfrak{F}$ in $\Mod \CX$ by $\rm{Mod}_0 \mbox{-} \CX$, and moreover  $\mmod_0 \CX=\rm{Im}\mathfrak{F}\cap \mmod \CX$.

\begin{lemma}\label{stabel=mmodd}
Let $\CA$ be an abelian category with enough projective objects and $\CX$ be a subcategory of $\CA$ containing $\Prj\CA$. Then we have the following commutative diagram
\[\xymatrix{\Mod\underline{\CX} \ar[r]^{\mathfrak{F} }& \rm{Mod}_0 \mbox{-}\CX \\ \mmod\underline{\CX} \ar@{^(->}[u] \ar[r]^{\mathfrak{F}{\vert}} &\mmodd\CX ,\ar@{^(->}[u] }\]
such that the  rows are equivalence and the others are inclusions. If furthermore $\CX$ is contravariantly finite, then $\mmod\underline{\CX}$ is an abelian category with enough projective objects.
\end{lemma}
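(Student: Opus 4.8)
The plan is to first establish the equivalence of the top row, namely that $\mathfrak{F}: \Mod\underline{\CX} \rt \rm{Mod}_0\mbox{-}\CX$ is an equivalence, and then restrict to finitely presented objects to obtain the bottom row. The key observation is that the canonical functor $\pi:\CX \rt \underline{\CX}$ is the identity on objects and surjective on morphisms (every morphism in $\underline{\CX}$ lifts to one in $\CX$). This means precomposition $\mathfrak{F} = (-)\circ \pi$ sends a functor $F:\underline{\CX}^{\op}\rt \CA b$ to the functor $X \mapsto F(\underline{X})$, which kills all maps factoring through projectives. Concretely, a functor $G \in \Mod\CX$ lies in the image of $\mathfrak{F}$ precisely when $G$ vanishes on $\Prj\CA$, since any such $G$ factors uniquely through $\pi$ (the factorization is forced because $G$ sends the residue-class relation to zero); this is exactly the defining property of $\rm{Mod}_0\mbox{-}\CX$. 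Faithfulness and fullness of $\mathfrak{F}$ onto its image then follow from the surjectivity of $\pi$ on morphisms together with the universal property of the quotient, so the top row is an equivalence as claimed in the paragraph preceding the lemma.

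Next I would check that $\mathfrak{F}$ restricts correctly to the finitely presented subcategories, giving the bottom row. Since $\mathfrak{F}$ is exact and $\pi$ sends the representable $\underline{\CX}(-,\underline{X})$ to the functor whose value at $X'$ is $\underline{\CX}(\underline{X}',\underline{X})$, a projective presentation of $F$ in $\Mod\underline{\CX}$ maps under $\mathfrak{F}$ to an exact sequence in $\Mod\CX$ of the same shape. Thus $\mathfrak{F}$ carries $\mmod\underline{\CX}$ into $\mmodd\CX$, and by definition $\mmodd\CX = \rm{Im}\mathfrak{F} \cap \mmod\CX$. The inverse equivalence takes a finitely presented functor vanishing on $\Prj\CA$ back through $\pi$; one must verify that a finite presentation in $\Mod\CX$ by representables $\CX(-,X)$ pushes to a finite presentation in $\Mod\underline{\CX}$, which again uses that $\pi$ is full and identity on objects. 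The commutativity of the square and the fact that the vertical maps are the obvious inclusions are then immediate.

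For the final assertion, suppose in addition that $\CX$ is contravariantly finite. By the discussion in the Functor category subsection, contravariant finiteness implies $\CX$ admits weak kernels, hence $\mmod\CX$ is abelian. The point is to transfer abelianness along the equivalence: since $\mmod\underline{\CX} \simeq \mmodd\CX$ is a full subcategory of the abelian category $\mmod\CX$, I must argue that $\mmodd\CX$ is closed under the relevant kernels and cokernels, i.e. that it is an abelian (Serre or at least exact abelian) subcategory. Closure under cokernels is clear because vanishing on $\Prj\CA$ is preserved by quotients; the subtler direction is closure under kernels, which is where I expect the main obstacle to lie. Here one should use that $\underline{\CX}$ itself admits weak kernels (the weak kernels of $\CX$ descend to $\underline{\CX}$ because $\Prj\CA \subseteq \CX$ and the stabilization is compatible with the approximation structure), so that $\mmod\underline{\CX}$ is abelian directly by the same criterion \cite[Chapter III, \S 2]{Au2}. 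Finally, the representables $\underline{\CX}(-,\underline{X})$ are projective generators, so $\mmod\underline{\CX}$ has enough projectives, completing the proof. The genuine difficulty throughout is bookkeeping the interaction between the quotient by projectives and the weak-kernel structure; once weak kernels are shown to descend to $\underline{\CX}$, the abelian and enough-projectives claims are formal.
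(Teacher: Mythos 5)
The paper gives no proof of this lemma: it simply cites Proposition~4.1 of \cite{AHK}. So your argument can only be judged on its own merits, and on those it is essentially correct and follows the standard route: functors on $\CX$ that vanish on $\Prj\CA$ are exactly those killing the ideal of maps factoring through projectives (since $\mathrm{id}_P$ itself factors through a projective), hence exactly those factoring through the full, object-bijective functor $\pi$, which gives the top equivalence; the bottom row is the restriction; and abelianness of $\mmod\underline{\CX}$ comes from $\underline{\CX}$ having weak kernels. Two steps should be made explicit, since both are where the hypotheses ``enough projectives'' and $\Prj\CA\subseteq\CX$ actually enter. First, when you say a projective presentation of $F$ in $\Mod\underline{\CX}$ maps to ``an exact sequence of the same shape'' and conclude $\mathfrak{F}(F)\in\mmod\CX$: the terms $\mathfrak{F}(\underline{\CX}(-,\underline{X}))$ are not representable over $\CX$, so you need the observation that choosing an epimorphism $P\rt X$ with $P$ projective (hence $P\in\CX$) yields an exact sequence $\CX(-,P)\rt\CX(-,X)\rt\underline{\CX}(-,\underline{X})\vert_{\CX}\rt 0$ (any map to $X$ factoring through some projective factors through $P$), so these terms are finitely presented and a two-term presentation of $F$ splices to a finite presentation of $\mathfrak{F}(F)$. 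Second, the descent of weak kernels to $\underline{\CX}$ is not literally ``the weak kernels of $\CX$ descend'': if $\underline{f}\,\underline{h}=0$ then $fh$ need only factor through a projective, not vanish, so the weak kernel of $f$ alone does not suffice. The standard repair is to take a weak kernel $g:W\rt X\oplus P$ of $[f,p]:X\oplus P\rt Y$ for an epimorphism $p:P\rt Y$ from a projective, and check that the composite $W\rt X$ becomes a weak kernel of $\underline{f}$ in $\underline{\CX}$. With these two insertions your proof is complete; I would also note that your worry about closure of $\mmodd\CX$ under kernels is unfounded, since kernels in $\mmod\CX$ are computed objectwise and vanishing on $\Prj\CA$ is an objectwise condition, so that route works as well.
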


\begin{proof}
See Proposition 4.1 of \cite{AHK}.
\end{proof}

The above result says us that one way of thinking  $\mmod \underline{\CX}$,  $\CX$ being contrarvariantly finite,  is considering it as a subcategory of $\mmod \CX$, that this  can be helpful as we will see later. One can also similarly  define  for  a subcategory $\CX$ consisting injective objects in an abelian category $\CA$ with enough injective objects, the stable category  $\overline{\CX}$ modulo injective modules.  In this case, since $\CA^{\op}$ holds the conditions of Lemma \ref{stabel=mmodd}  we  will also have a  dual version of  the Lemma. Denote by $\CX \mbox{-}_0 \rm{Mod}$, resp.  $\CX \mbox{-}_0 \rm{mod},$ the category of  covariant functors, resp.  finitely presented covariant functors, vanish on injective objects.



\subsection{Gorenstein projective modules}\label{gproj}

Recall that an $R$-module $G$ is called Gorenstein projective if it is a syzygy of a $\Hom_{R}( - ,\Prj R)$-exact exact complex
\[\cdots \rt P_{1} {\rt} P_0  {\rt} P^0 \rt P^1 \rt \cdots,\]
of projective modules. The class of Gorenstein projective modules is denoted by $\GPrj R$. Dually one can define the
class of Gorenstein injective modules $\GInj R$. We set $\Gprj R=\GPrj R \cap \mmod R$. One can see easily any projective module is Gorenstein projective. Moreover, if $\La$ is a self-injective algebra, then all modules are Gorenstein projective, in particular $\mmod \La=\Gprj\La$, and also if $\La$ has finite global dimension we don't have non-projective Gorenstein projective modules, i.e. $\Gprj \La= \rm{proj} \mbox{-} \La.$  $\La$ is
called virtually Gorenstein if $(\GPrj \La)^\perp = {}^\perp (\GInj \La)$, where orthogonal is taken with respect to $\Ext^1$, see \cite{BR}.
It is proved by Beligiannis and Henning \cite[Theorem 5]{BK} that if $\La$ is a virtually Gorenstein algebra, then $\Gprj \La$ is a contravariantly
finite subcategory of $\mmod \La$.
  Recall that $\La $ is called a $n$-Gorenstein Algebra, if injective dimensions of $\La$ as right and left module are less than or equal to $n$. It is known that Gorenstein algebras are Virtually Gorenstein but the converse cannot be true.
An algebra $\La$ is called of finite Cohen-Macaulay type, finite CM-type for short, if the number of indecomposable Gorenstein projective modules up to isomorphism is finite. It may be more reasonable to call such algebras of finite GP-type, but since "of finite CM-type" is more known terminology, we prefer to use this notation throughout the paper. 
Recall that the full subcategory $\rm{GPrj } \mbox{-} R$ of $\Mod R$ is closed under extensions. It
follows that $\rm{Gprj } \mbox{-} R$ is an exact category in the sense of Quillen. Moreover, it is a Frobenius
category, whose projective-injective objects are precisely projective $R$-modules.
Then by \cite [Theorem I.2.8]{H} the stable category  $\underline{\rm{GPrj }} \mbox{-} R$ modulo projective $R$-modules
has a natural triangulated structure: the translation functor is a quasi-inverse of the
syzygy functor, and triangles are induced by short exact sequences with terms in $\rm{GPrj } \mbox{-} R$. If $R$ is a right Noetherian, then we can consider $\underline{\rm{Gprj }}\mbox{-}R$ as a subtriangulated category of $\underline{\rm{GPrj }} \mbox{-} R$.

\subsection{Morphism category}
Let $\CA$ be an abelian category. The morphism category $\bf{H}(\CA)$ has as objects the
maps  in $\CA$, and morphisms are given by commutative diagrams. If we consider a map $f:A \rt B$ in $\mmod \La$
as an object of $H(\La),$ we will write either

$$
(A\xrightarrow{f} B) \quad \text{or} \quad \begin{array}{c}
A \\
\mathrel{\mathop{\downarrow}^f}
\\
B
\end{array} $$
but often also just $f$ or without parenthesis, whenever will be convenient and not misleading.
This category is again an abelian category. If $\CA$ is $\Mod R$, we will show by $\mathbf{H}(R)$ instead of $\mathbf{H}(\Mod R)$. In fact, $\mathbf{H}(R)$  is equivalent to $\Mod T_2(R)$, where
$T_2(R)=\bigl(\begin{smallmatrix}
R & R \\
0 & R
\end{smallmatrix}\bigr)$,  the ring of upper triangular $(2\times 2)$-matrices with coefficients in $R$.

One can also consider $\mathbf{H}(R)$ as representations over quiver $\mathbb{A}_2:\bullet \rightarrow \bullet$ by $R$-modules,   denoted by $\rm{Rep}(\mathbb{A}_2, R)$. In the rest of paper we shall use these identifications completely free, so there is no difference for us between morphisms in $\mmod \La$ or representations of $\mathbb{A}_2$
and modules over algebra $T_2(R).$ The subcategory of morphism $f:A \rt B$ in which $A$ and $B$ are finitely generated $R$- modules presented by $\rm{H}(R)$, or $\rm{rep}(\mathbb{A}_2, R)$,  that by the equivalence mapped to the  finitely generated $T_2(R)$-modules.\\
The study of Gorenstein projective presentations of a (not necessary finite) quiver over arbitrary rings were studied for instance  in \cite{EHS} and \cite{EEG}.  The aim of these papers was to give a local characterization of Gorenstein projective modules. The case of our interest here is only when   quiver is $\mathbb{A}_2$, then we have:
\begin{lemma}\label{GPCH} Assume $R$ is an arbitrary ring. Then a representation $A \st{f} \rt B$ of $\mathbb{A}_2$ is Gorenstein projective if and only if
\begin{itemize}
\item[$(1)$] $A, B$ and $\rm{Coker}(f)$ are Gorenstein projective;
\item[$(2)$]  $f$ is a monomorphism.
\end{itemize}
\end{lemma}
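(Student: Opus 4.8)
The plan is to argue through the equivalence $\mathbf{H}(R) \simeq \Mod T_2(R)$ and to work entirely at the level of totally acyclic complexes, since $R$ is arbitrary and we cannot fall back on finite generation. Recall that the projective objects of $\mathbf{H}(R)$ are precisely the representations of the form $(P \st{\binom{1}{0}}\rt P \oplus Q)$ with $P, Q$ projective $R$-modules. I would record the four relevant functors $\mathbf{H}(R) \rt \Mod R$ — the two evaluations $e_1(A\st f\rt B)=A$ and $e_2(A \st f\rt B)=B$ (both exact), together with $\Coker$ and $\Ker$ — and the adjunction identities I will lean on: $e_2$ is simultaneously left adjoint to $(X\mapsto (X\st{\rm{id}}\rt X))$ and right adjoint to $(X\mapsto (0\rt X))$, while $\Coker$ is left adjoint to $(X\mapsto (0\rt X))$ and $e_1$ is right adjoint to $(X\mapsto (X\st{\rm{id}}\rt X))$. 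The point of these identities is that each evaluation carries a projective $T_2(R)$-module to a projective $R$-module, so they convert a $\Hom_{T_2(R)}(-,\text{projective})$ computation into a $\Hom_R(-,\text{projective})$ computation.

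For necessity, suppose $M=(A\st f\rt B)$ is Gorenstein projective and fix a totally acyclic complex $\mathbf{P}^\bullet$ of projective $T_2(R)$-modules having $M$ as a syzygy, with syzygies $M^n=(A^n \st{f_n}\rt B^n)$. First I would prove $(2)$: since each $M^n=\Ker(\mathbf{P}^n \rt \mathbf{P}^{n+1})$ is a subrepresentation of $\mathbf{P}^n=(P^n \st{\binom{1}{0}}\rt P^n\oplus Q^n)$, the map $f_n$ is a restriction of the split monomorphism $\binom{1}{0}$ and is therefore monic; in particular $f=f_0$ is monic. Applying the exact functor $e_2$ to $\mathbf{P}^\bullet$ yields an exact complex of projective $R$-modules with $B$ as a syzygy, and testing $\mathbf{P}^\bullet$ against the projectives $(P' \st{\rm{id}}\rt P')$ (using $e_2 \dashv (X\mapsto(X\st{\rm{id}}\rt X))$) shows $\Hom_R(e_2\mathbf{P}^\bullet,P')$ is exact; hence $B$ is Gorenstein projective. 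Next, because every $f_n$ is monic, the snake lemma turns each short exact sequence $0\rt M^n \rt \mathbf{P}^n \rt M^{n+1}\rt 0$ into a short exact sequence $0\rt \Coker f_n \rt Q^n \rt \Coker f_{n+1}\rt 0$; splicing these exhibits $\Coker\mathbf{P}^\bullet$ as an exact complex of projective $R$-modules with $\Coker f$ as a syzygy, and the adjunction $\Coker \dashv (X\mapsto(0\rt X))$ applied to the test objects $(0\rt Q')$ gives its total acyclicity, so $\Coker f$ is Gorenstein projective. Finally $A$ is Gorenstein projective because $0\rt A \rt B \rt \Coker f\rt 0$ exhibits it as the kernel of an epimorphism between Gorenstein projectives and $\GPrj R$ is projectively resolving.

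For sufficiency, assume $(1)$ and $(2)$. Monicity of $f$ produces the short exact sequence of representations
\[ 0 \rt (A \st{\rm{id}}\rt A) \rt (A \st f\rt B) \rt (0 \rt \Coker f)\rt 0. \]
I would then check that the functors $(X \mapsto (X \st{\rm{id}}\rt X))$ and $(X \mapsto (0 \rt X))$ carry a totally acyclic complex of projective $R$-modules to a totally acyclic complex of projective $T_2(R)$-modules: both are exact and preserve projectives, and total acyclicity survives because, via $\big(X\mapsto (X\st{\rm{id}}\rt X)\big)\dashv e_1$ and $\big(X\mapsto(0\rt X)\big)\dashv e_2$, testing the image complex against a projective $T_2(R)$-module $\mathbf{Q}$ reduces to testing the original complex against the projective $R$-module $e_1\mathbf{Q}$, respectively $e_2\mathbf{Q}$. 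Hence $(A\st{\rm{id}}\rt A)$ and $(0\rt \Coker f)$ are Gorenstein projective over $T_2(R)$, and since $\GPrj R$ is closed under extensions the displayed sequence forces $M$ to be Gorenstein projective.

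The main obstacle I anticipate is not the exactness bookkeeping but the need to propagate total acyclicity — that is, $\Hom(-,\text{projective})$-exactness — rather than mere acyclicity as the functors are applied; this is exactly what the adjunction identities, together with the fact that evaluations send projective $T_2(R)$-modules to projective $R$-modules, are designed to control. The secondary delicate point is the monomorphism claim in the necessity part and the accompanying snake-lemma step, since it is only after establishing that every $f_n$ is monic that $\Coker$ behaves exactly on $\mathbf{P}^\bullet$ and delivers $\Coker f$ as a genuine syzygy of a complex of projectives.
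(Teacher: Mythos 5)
Your proof is correct. Be aware, though, that the paper does not prove this lemma itself: its entire proof is the citation \cite[Theorem 3.5.1]{EHS}, where the analogous characterization is obtained for Gorenstein projective representations of much more general quivers. Your argument amounts to a self-contained specialization of that machinery to $\mathbb{A}_2$: in both approaches the engine is the adjunctions relating the evaluation functors to $X\mapsto (X\st{\rm{id}}\rt X)$ and $X\mapsto (0\rt X)$, which convert $\Hom_{T_2(R)}(-,\Prj T_2(R))$-exactness into $\Hom_R(-,\Prj R)$-exactness and back; what you gain is a short, transparent proof with no quiver-representation machinery, and what the citation buys the paper is generality for free. Your necessity argument (monicity of every syzygy map $f_n$ as the restriction of the split monomorphism $\binom{1}{0}$, the snake-lemma splicing that produces the totally acyclic complex of cokernels, and the adjunction tests against $(P'\st{\rm{id}}\rt P')$ and $(0\rt Q')$) and your sufficiency argument via the extension $0\rt (A\st{\rm{id}}\rt A)\rt (A\st{f}\rt B)\rt (0\rt \Coker(f))\rt 0$ both check out. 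Two caveats worth making explicit: you silently invoke that over an arbitrary ring the class of Gorenstein projective modules is projectively resolving and closed under extensions (Holm's theorem) --- legitimate, but genuine external inputs rather than formalities --- and in your final step the extension-closure you actually need is for Gorenstein projective $T_2(R)$-modules, not $\GPrj R$; since the fact holds over any ring this is only a slip of notation.
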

\begin{proof}
See \cite[Thoreom 3.5.1]{EHS}
\end{proof}
This characterization can be helpful to transfer some Gorenstein property from $R$ to $T_2(R)$, e.g. if  $\La$ is a virtually Gorenstein algebra then   $T_2(\La)$ so  is, see \cite[Thoreom 3.4.3]{EHS}. But note that being of finite  $\rm{CM}$-type cannot be preserved, if $\La=K[x]/(x^n)$ and $K$ be an algebraic closed field, then $T_2(K[x]/(x^n))$ isn't of finite $\rm{CM}$-type for $n>5.$ But in Section 5, we will discuss kinds of algebras which preserve this property.

\subsection{Quotient category}\label{objective}
We define the kernel of an additive functor $F :\mathcal{A} \rt \mathcal{B}$, between additive categories $\CA$ and $\CB$, as the full subcategory of all objects $A$ in $\mathcal{A}$ such that $F(A)=0$.  For   a given full subcategory $\mathcal{C}$ of $\mathcal{A}$,  the ideal $\rm{I}_{\mathcal{C}}$ of $\mathcal{A}$, or the  sub functor $\rm{I}_{\mathcal{C}}$ of the bifunctor $\Hom(-,-)$,  for any $X, Y \in \rm{Ob}(\mathcal{A})$  defined as subgroup  $I_{\mathcal{C}}(X, Y )$ of $\Hom_{\mathcal{A}}(X, Y ) $ consisting those morphisms  that factor through an object in $\mathcal{C}$.
By a quotient category we mean the additive quotient. That is, given
an additive category $\mathcal{A}$ and a ideal $\mathcal{I}$ of $\mathcal{A}$, the quotient category $\mathcal{A}/\mathcal{I}$ has the same objects as  $ \rm{Ob}(\mathcal{A})$ and for any  $X, Y \in \rm{Ob}(\mathcal{A})$
$$\Hom_{\mathcal{A}/\mathcal{C}}(X, Y ) := \frac{\Hom_{\mathcal{A}}(X, Y )} {I_{\mathcal{C}}(X, Y )}.$$
\subsection{ِِDualizing varieties}\label{DualityVaraity1}
Let $k$ be a commutative artinian ring.  Let $\CX$ be an additive $k$-linear essentially small category. $\CX$ is called a dualizing $R$-variety if the functor $\Mod\CX \lrt \Mod\CX^{\op}$ taking $F$ to $DF$, induces a duality $\mmod\CX \lrt \mmod\CX^{\op}$. Note that $D( - ):=\Hom_k( - ,E)$, where $E$ is the injective envelope of $k/{\rm rad}k$.
 Every functorially finite subcategory of $\mmod \La$ is  a dualizing $k$-variety \cite[Theorem 2.3]{AS}.

\section{From morphism categories to the finitely presented functors over the stable categories}

In this section we provide several equivalences between quotient categories of certain   subcategories  of morphism category and the categories of
finitely presented functors over the stable categories.
\subsection{The First  equivalence} \label{Firstconstuction}

For a given subcategory $\CX$ of an abelian category  $\CA$ we assign the subcategory $S_{\CX}(\CA)$ of $\mathbf{H}(\CA)$ consisting morphism $A \st{f} \rt B$ satisfying:
\begin{itemize}
\item[$(i)$] $f$ is a monomorphism;
\item[$(ii)$] $A$, $B$ and $\rm{Coker} (f)$  belong to $\CX.$
\end{itemize}
\begin{example}
By lemma \ref{GPCH} if $R$ is a right Notherian ring, then $S_{\Gprj R}(\mmod R)$, or simply $S(\Gprj R)$,  is equivalent to the subcategory  of finitely generated Gorenstein projective modules over $T_2(R).$  Let $K$ be a field and setting $\Lambda_n=K[x]/(x^n)$,  as $\La_n$ is self-injective, then $\Gprj \La=\mmod \La$. The subcategory $S(\mmod \Lambda_n)$, or $S(n)$ as used in \cite{RZ}, is the subcategory of invariant subspace of nilpotent operators with nilpotency index at most $ n$. The structure of this subcategory 
has been studied more recently by Ringel and Schmidmeier in \cite{RS1} and \cite{RS2}. Moreover, take $\rm{Flat} \mbox{-} R$, the subcategory of right flat modules, then $S_{\rm{Flat} \mbox{-} R}(\Mod R)$  will be the subcategory of  flat modules over $T_2(R)$ by \cite{EOT}.
\end{example}
 In this subsection we have plan to show that $\mmod \underline{\CX}$ can be described as a quotient category of $S_{\CX}(\CA).$ To this end, we define functor $\Psi: S_{\CX}(\CA) \rt \mmod \underline{\CX}$ respect to the  subcategory $\CX$ as follows.
 \begin{construction}\label{FirstCoonstr}

Taking an object $A \st{f} \rt B$ of $S_{\CX}(\CA)$, then we have the following short exact sequence
$$0 \rt A \st{f} \rt  B \rt \rm{Coker}(f) \rt 0$$ in $\CA, $ this in turn gives the following short exact sequence
$$ (*) \ \ \ 0 \lrt (-, A) \st{(-, f )} \rt (-, B) \rt (-, \rm{Coker}(f)) \rt F \rt 0$$
in $\mmod \CX$. In fact, $(*)$  corresponds to a projective resolution of $F$ in $\mmod \CX.$ We define $\Psi(A \st{f} \rt B)=: F$.

For morphism: Let $\sigma=(\sigma_1, \sigma_2)$ be a morphism from $A \st{f} \rt B$ to $A' \st{f'} \rt B'$, we can make the following diagram
\[\xymatrix{0 \ar[r] & A \ar[r]^{f} \ar[d]^{\sigma_1} & B \ar[r] \ar[d]^{\sigma_2} & \rm{Coker}(f)  \ar[r] \ar[d]^{\sigma_3} & 0 \\
0 \ar[r] & A'  \ar[r]^{f'} & B' \ar[r] & \rm{Coker}(f')  \ar[r]  & 0,}\]
that $\sigma_3$ can be determined uniquely by $\sigma_1$ and $\sigma_2.$ By appying the Yoneda lemma, the above diagram gives  the following diagram

\[\xymatrix{0 \ar[r] & (-, A) \ar[d]_{(-, \sigma_1)} \ar[r]^{(-, f)} & (-, B) \ar[d]^{(-, \sigma_2)} \ar[r] & (-, \rm{Coker}  (f)) \ar[d]^{(-, \sigma_3)} \ar[r] & F \ar[d]^{\overline{(-, \sigma_3)}} \ar[r] & 0 \\ 0  \ar[r] & (-, A')\ar[r]^{(-, f')} & (-, B')\ar[r] & (-,\rm{Coker} (f') )\ar[r] & F' \ar[r] & 0.}\]
in $\mmod \CX.$ Define $\Psi(\sigma):= \overline{(-,\sigma_3)}$, which is obtained uniquely by $\sigma_1$ and $\sigma_2$.

\end{construction}

\begin{theorem} \label{Thefirst}
Let $\CA$ be an abelian category with enough projectives.
Let $\CX$ be a subcategory of $\CA$ including $\rm{Proj} \mbox{-}\CA$, being contravariantly finite and closed under kernel of epimorphisms. Consider the full subcategory $\mathcal{V}$ of $S_{\CX}(\CA)$ formed by  finite direct sums of   objects in the form of $(X \st{\rm{Id}}\rt X )$ or $ (0 \rt X )$, that $X$ runs through  all of objects in $\CX$. Then the functor $\Psi$, defined in above construction, induces the following equivalence of categories
$$ S_{\CX}(\CA)/ \mathcal{V} \simeq \mmod \underline{\CX}.$$
\end{theorem}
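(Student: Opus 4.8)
The plan is to show that $\Psi$ is a \emph{full} and \emph{dense} additive functor whose kernel ideal (the morphisms sent to $0$) coincides with the ideal $I_{\mathcal{V}}$ of maps factoring through $\mathcal{V}$; the theorem then follows from the general principle that a full dense additive functor $F\colon \mathcal{B}\rt\mathcal{C}$ induces an equivalence $\mathcal{B}/\ker F \simeq \mathcal{C}$ (fullness and density descend to the quotient, and faithfulness of the induced functor holds by the very definition of $\ker F$). Two preliminary points set the stage. For any $(A\st{f}\rt B)$ in $S_{\CX}(\CA)$ and any $P\in\Prj\CA$ the map $(P,B)\rt(P,\Coker f)$ is onto, so $\Psi(A\st{f}\rt B)$ vanishes on $\Prj\CA$; hence $\Psi$ takes values in $\mmodd\CX$, identified with $\mmod\underline{\CX}$ by Lemma \ref{stabel=mmodd}. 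Moreover a direct inspection of $(*)$ gives $\Psi(X\st{\rm{Id}}\rt X)=0$ and $\Psi(0\rt X)=0$, so $\mathcal{V}\subseteq\ker\Psi$ and therefore $I_{\mathcal{V}}\subseteq\ker\Psi$.

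For density I would start from $F\in\mmodd\CX$ and choose a projective presentation $\CX(-,X_1)\st{(-,g)}\rt\CX(-,X_0)\rt F\rt 0$ coming from a morphism $g\colon X_1\rt X_0$ in $\CX$. Since $F$ vanishes on $\Prj\CA$, evaluating at a projective $P$ equipped with an epimorphism $P\twoheadrightarrow X_0$ shows that $g$ is an epimorphism; as $\CX$ is closed under kernels of epimorphisms, $K:=\Ker g$ lies in $\CX$, and $0\rt K\rt X_1\st{g}\rt X_0\rt 0$ exhibits $(K\hookrightarrow X_1)$ as an object of $S_{\CX}(\CA)$ with cokernel $X_0$. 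By the definition of $\Psi$ we get $\Psi(K\hookrightarrow X_1)\cong F$.

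For fullness, a morphism $\psi\colon F\rt F'$ between the images of $(A\st{f}\rt B)$ and $(A'\st{f'}\rt B')$ lifts, by the comparison theorem, to a chain map between the projective resolutions $(*)$; Yoneda's lemma converts the three components of this lift into morphisms $\sigma_1\colon A\rt A'$, $\sigma_2\colon B\rt B'$, $\sigma_3\colon C\rt C'$ (with $C=\Coker f$, $C'=\Coker f'$) compatible with $f,f'$ and the quotient maps, that is, into a morphism $\sigma=(\sigma_1,\sigma_2)$ of $S_{\CX}(\CA)$ with $\Psi(\sigma)=\psi$.

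The remaining and most delicate step is the inclusion $\ker\Psi\subseteq I_{\mathcal{V}}$, which I expect to be the main obstacle. Here I would use crucially that $(*)$ is a complex of projectives: if $\Psi(\sigma)=0$, then the chain map induced by $\sigma$ lifts the zero map $F\rt F'$ and is therefore null-homotopic. Reading the homotopy through Yoneda's lemma produces morphisms $h\colon C\rt B'$ and $k\colon B\rt A'$ with
\[\sigma_1=k f,\qquad \sigma_2=f' k+h q,\]
where $q\colon B\rt C$ is the canonical surjection. These identities say exactly that $\sigma$ factors as $(A\st{f}\rt B)\rt (A'\st{\rm{Id}}\rt A')\oplus(0\rt C)\rt(A'\st{f'}\rt B')$, the first arrow having bottom component $(k,q)$ and the second having bottom component $(f',h)$; since $(A'\st{\rm{Id}}\rt A')\oplus(0\rt C)\in\mathcal{V}$, this gives $\sigma\in I_{\mathcal{V}}$. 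Combined with $I_{\mathcal{V}}\subseteq\ker\Psi$ we obtain $\ker\Psi=I_{\mathcal{V}}$, and the general principle quoted above yields the equivalence $S_{\CX}(\CA)/\mathcal{V}\simeq\mmod\underline{\CX}$. Throughout, contravariant finiteness of $\CX$ (which makes $\mmod\CX$ abelian, so that $(*)$ really is a projective resolution) and closure under kernels of epimorphisms are what keep every construction inside $\CX$.
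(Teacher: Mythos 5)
Your proposal is correct and follows essentially the same route as the paper: show $\Psi$ is full, dense, and that every morphism killed by $\Psi$ factors through $\mathcal{V}$ (the paper's ``objectivity''), then pass to the additive quotient. The only differences are cosmetic --- you write out the null-homotopy explicitly via Yoneda to get $\sigma_1=kf$, $\sigma_2=f'k+hq$ and factor through $(A'\st{\rm{Id}}\rt A')\oplus(0\rt C)$ where the paper factors the chain map through a contractible complex and lands in $(A'\st{\rm{Id}}\rt A')\oplus(0\rt B')$, and your ideal-theoretic formulation of the kernel subsumes the paper's separate check of which objects are annihilated.
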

\begin{proof}
The functor $\Psi$ is dense. It can be seen by Lemma \ref{stabel=mmodd}  that any functor in $\mmod \underline{\CX}$ obtained by a short exact sequence in $\CA$ (with its two right terms belong to $\CX$), and then the monomorphism in the short exact sequence gives us an object in $S_{\CX}(\CA)$, by applying the assumption $\CX$ being closed under kernel of epimorphisms. Any morphism between functors in $\mmod \underline{\CX}$ is  considered  as a morphism in $\mmod \CX$, so  lifted to the  corresponding projective resolution. Then by using Yoneda's Lemma we can obtain a morphism in $S_{\CX}(\CA)$ to prove fullness. If $\Psi(A \st{f} \rt B)=0$, then by definition,
$$0 \rt (-, A) \st{(-, f)} \lrt (-, B)  \st{(-, g)} \lrt (-, \rm{Coker}(f)) \rt 0.$$
Now by evaluation the above short exact sequence in  $\rm{Coker}(f)$, as it belongs to $\CX,$ we get
$$0 \rt A \st{f} \rt B \rt \rm{Coker}(f) \rt 0$$
is a split short exact sequence. This gives us $A \st{f} \rt B$ being isomorphic to the  object $(A \rt \rm{Im}(f)) \oplus (\rm{Ker}(g) \rt 0)$ in $\mathbf{H}(\CA)$, and so $f$ belongs to $\mathcal{V}$. Note that $\rm{Ker}(g) \in \CX$: as $A\simeq \rm{Im}(f)$ then $\rm{Im}(f)$ is in $\CX$,  and on the other hand by the following split short exact sequence
$$0 \rt \rm{Ker}(g) \rt B\simeq \rm{Im}(f)\oplus \rm{Ker}(g) \rt \rm{Im}(f) \rt 0,$$
and using our assumption being closed under kernel of epimorphisms proves our claim. Assume that $\Psi(\sigma)=0$, for $\sigma=(\sigma_1, \sigma_2): (A \st{f} \rt B) \rt (A' \st{f'} \rt B')$ in $S_{\CX}(\La)$. Therefore, we have

\[\xymatrix{0 \ar[r] & (-, A) \ar[d]_{(-, \sigma_1)} \ar[r]^{(-, f)} & (-, B) \ar[d]^{(-, \sigma_2)} \ar[r] & (-, \rm{Coker}(f)) \ar[d]^{(-, \sigma_3)} \ar[r] & F \ar[d]^{0} \ar[r] & 0 \\ 0  \ar[r] & (-, A')\ar[r]^{(-, f')} & (-, B')\ar[r] & (-,\rm{Coker}(f') )\ar[r] & F' \ar[r] & 0.}\]

Since the first (resp. second) row of the above diagram is a projective resolution for $F$ (resp. $F'$) in $\mmod \CX$. Then considering the following diagram
\[\xymatrix{\cdots \ar[r] & (-, A) \ar[d]_{(-, \sigma_1)} \ar[r]^{(-, f)} & (-, B) \ar[d]^{(-, \sigma_2)} \ar[r] & (-, \rm{Coker}(f)) \ar[d]^{(-, \sigma_3)} \ar[r] &  \cdots \\ \cdots  \ar[r] & (-, A')\ar[r]^{(-, f')} & (-, B')\ar[r] & (-,\rm{Coker}(f') )\ar[r] &  \cdots,}\]
as a chain map in $\mathbb{C}^{\rm{b}}(\mmod \CX)$, the category of bonded complexes on $\mmod \CX,$ should be null-homotopic. Hence by using a standard argument, the above chain map should  be factored through projective complexes in $\mathbb{C}^{\rm{b}}(\mmod \CX)$ as follows:

$$\xymatrix{    & 0 \ar[r] & (-, A)   \ar@/^1.25pc/@{.>}[dd] \ar[d]  \ar[r]  & (-, B)  \ar@/^1pc/@{.>}[dd] \ar[d]  \ar[r]  &  (-, \rm{Coker}(f))  \ar@/^1.2pc/@{.>}[dd] \ar[d]  \ar[r]  & 0 \ar@/^1.2pc/@{.>}[dd] \ar[d] &   \\
	&0 \ar[r] & (-, A') \ar[r] \ar[d] & (-, A'\oplus B') \ar[r] \ar[d]  &   (-, B'\oplus\rm{Coker}(f') )  \ar[d]  \ar[r]  & \rm{Coker}(f )\ar[d]  \\
	& 0 \ar[r] & (-, A')  \ar[r]  & (-, B')  \ar[r]  &(- \rm{Coker}(f')) \ar[r] & 0. }$$

 This factorisation as the above  gives us a factorization of morphism $\sigma$ through the direct sum of $A' \st{\rm{Id}} \rt A'$  and $0 \rt B'$ in  $S_{\CX}(\CA).$ So $\Psi$ is a objective functor in the sense of \cite{RZ}.

 So $\Psi $ is  full, dense and objective. Hence by \cite[Appendix]{RZ}, $\Psi $ induces an equivalence between $S_{\CX}(\CA)/ \mathcal{V}$ and $\mmod \underline{\CX}$. So we are done.

\end{proof}

 The two  categories in the equivalence of the above theorem in a natural way can be dualized. Hence we have a dual version of the theorem. To sate the dual, we need some notations. For subcategory $\CX$ of $\CA$ let $F_{\CX}(\CA)$ be the  subcategory of $\mathbf{H}(\CA)$ containing morphism $A \st{f}\rt B$ satisfying:
\begin{itemize}
\item[$(1)$] $f$ is an epimorphism;
\item[$(2)$] $A, B$ and $\Ker (f)$ are in $\CX.$
\end{itemize}
Functor $\Psi':F_{\CX}(\CA) \rt (\overline{\CX}\mbox{-} \rm{mod})^{\rm{op}}$ respect to the subcategory $\CX$ can be defined similar to $\Psi,$ mapping  object $(A \st{f} \rt B)$ into  a functor fitting in the following exact sequence
$$0 \rt (B, -) \st {(f, -)} \rt (A, -) \rt (\Ker(f), -) \rt F \rt 0 .$$
and morphisms can be defined in the obvious way.
\begin{theorem}\label{Thesecound}
Let $\CA$ be an abelian category with enough injectives.
Let $\CX$ be a subcategory of $\CA$ including $\rm{Inj} \mbox{-}\CA$, being  covariantly finite and closed under cokernel of monomorphisms. Consider the full subcategory $\mathcal{V'}$ formed by finite  direct sums of objects of the forms $(X \st{\rm{Id}}\rt X )$ or $ (X \rt 0 )$,  that $X$ runs through  all of objects in $\CX$. Then the functor $\Psi'$, defined above, induces the following equivalence of categories
$$ F_{\CX}(\CA)/ \mathcal{V'} \simeq (\overline{\CX} \mbox{-} \rm{mod})^{\rm{op}}.$$
\end{theorem}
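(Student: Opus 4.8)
The plan is to obtain Theorem \ref{Thesecound} as a formal dual of Theorem \ref{Thefirst} by passing to the opposite category, rather than by re-running the entire objective-functor argument. The key observation is that the hypotheses of the two theorems are related by the duality $\CA \leftrightsquigarrow \CA^{\op}$: if $\CA$ has enough injectives then $\CA^{\op}$ has enough projectives; a subcategory $\CX \subseteq \CA$ containing $\Inj\CA$, covariantly finite, and closed under cokernels of monomorphisms corresponds to a subcategory $\CX^{\op} \subseteq \CA^{\op}$ containing $\Prj(\CA^{\op})$, contravariantly finite, and closed under kernels of epimorphisms. Thus $\CX^{\op}$ satisfies exactly the hypotheses imposed on the subcategory in Theorem \ref{Thefirst}.

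First I would make precise the dictionary between the two morphism categories. An object $(A \st{f}\rt B)$ of $\mathbf{H}(\CA)$ is, under the canonical isomorphism $\mathbf{H}(\CA)^{\op}\simeq \mathbf{H}(\CA^{\op})$, sent to the object $(B \st{f}\rt A)$ of $\mathbf{H}(\CA^{\op})$; here an epimorphism in $\CA$ becomes a monomorphism in $\CA^{\op}$, and $\Ker(f)$ computed in $\CA$ becomes $\Coker$ computed in $\CA^{\op}$. Consequently the defining conditions (1)--(2) of $F_{\CX}(\CA)$ translate exactly into conditions $(i)$--$(ii)$ defining $S_{\CX^{\op}}(\CA^{\op})$, giving an equivalence $F_{\CX}(\CA)^{\op}\simeq S_{\CX^{\op}}(\CA^{\op})$. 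Under this identification the subcategory $\mathcal{V}'$, built from objects $(X\st{\rm{Id}}\rt X)$ and $(X\rt 0)$, corresponds to the subcategory $\mathcal{V}$ of Theorem \ref{Thefirst}, built from $(X\st{\rm{Id}}\rt X)$ and $(0\rt X)$, since $(X\rt 0)$ dualizes to $(0\rt X)$.

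Next I would verify that the functor $\Psi'$ is the opposite of the functor $\Psi$ attached to $\CX^{\op}$. Applying $\Hom_\CA(-,-)$ contravariantly turns the short exact sequence $0\rt \Ker(f)\rt A\st{f}\rt B\rt 0$ into the exact sequence $0\rt (B,-)\st{(f,-)}\rt (A,-)\rt (\Ker(f),-)\rt F\rt 0$ of left $\CX$-modules, i.e. of right $\CX^{\op}$-modules; this is literally the sequence $(*)$ of Construction \ref{FirstCoonstr} read in $\CA^{\op}$. Hence $\Psi'$ coincides, under the identifications above, with $\Psi\colon S_{\CX^{\op}}(\CA^{\op})\rt \mmod\underline{\CX^{\op}}$ composed with the canonical opposite. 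Since the stable category $\overline{\CX}$ modulo injectives in $\CA$ is by construction $\underline{\CX^{\op}}$ in $\CA^{\op}$ (this is exactly the dual version of Lemma \ref{stabel=mmodd} already invoked in the paragraph following it), we get $\mmod\underline{\CX^{\op}} = \overline{\CX}\mbox{-}\rm{mod}$, and the target acquires an extra opposite, explaining the $(-)^{\op}$ in the statement.

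Having set up these correspondences, the theorem follows by applying Theorem \ref{Thefirst} to the abelian category $\CA^{\op}$ with the subcategory $\CX^{\op}$: that theorem yields $S_{\CX^{\op}}(\CA^{\op})/\mathcal{V}\simeq \mmod\underline{\CX^{\op}}$, and taking opposite categories throughout converts this into $F_{\CX}(\CA)/\mathcal{V}'\simeq (\overline{\CX}\mbox{-}\rm{mod})^{\op}$, as desired. The main obstacle I anticipate is purely bookkeeping rather than conceptual: one must check carefully that the duality $\mathbf{H}(\CA)^{\op}\simeq \mathbf{H}(\CA^{\op})$ really does interchange $S$ with $F$ (matching the mono/epi and $\Ker$/$\Coker$ swaps), that it sends $\mathcal{V}'$ precisely to $\mathcal{V}$, and that $\Psi'$ is genuinely $\Psi^{\op}$ on objects \emph{and} on the induced morphisms $\overline{(-,\sigma_3)}$; once these identifications are confirmed, no new instance of the ``full, dense, objective'' verification is required, since it is inherited verbatim from the proof of Theorem \ref{Thefirst}.
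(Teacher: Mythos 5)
Your proposal is correct and is essentially the paper's own proof: the paper disposes of Theorem \ref{Thesecound} in one line by noting it is the dual of Theorem \ref{Thefirst}, obtained by applying that theorem to $\CA^{\op}$ and $\CX^{\op}$. You have simply spelled out the bookkeeping (the identifications $F_{\CX}(\CA)^{\op}\simeq S_{\CX^{\op}}(\CA^{\op})$, $\mathcal{V}'\leftrightarrow\mathcal{V}$, $\Psi'=\Psi^{\op}$, and $\mmod\underline{\CX^{\op}}=\overline{\CX}\mbox{-}\rm{mod}$) that the paper leaves implicit, and all of it checks out.
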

\begin{proof}
The proof is dual of Theorem \ref{Thefirst}, or deducing directly  from it by setting  $\CA^{\op}$ as an abelian category with enough projectives  and $\CX^{\op}$ as a subcategory of $\CA^{\op}$ satisfying the assumptions of the theorem .
\end{proof}
We use the same notation for the  induced  equivalences in Theorem \ref{Thefirst} and Theorem \ref{Thesecound}.
Sometimes we consider $\Psi'$ as a duality from $F_{\CX}(R)/ \mathcal{V'}$ to $\overline{\CX} \mbox{-} \rm{mod}.$ 

For a subcategory $\CX $ of $\CA$, we attached two subcategories $F_{\CX}(\CA)$ and $S_{\CX}(\CA)$ of $\mathbf{H}(\CA)$. These two subcategories are 
related by the kernel and cokernel functors:
$$\rm{Cok}: S_{\CX}(\CA) \rightarrow F_{\CX}(\CA), \qquad (A\xrightarrow{f} B) \mapsto (B\xrightarrow{\text{can}} \Coker(f)) $$
$$\Ker: F_{\CX}(\CA) \rightarrow S_{\CX}(\CA), \quad\qquad (A\xrightarrow{g} B) \mapsto (\Ker(g)\xrightarrow{\text{inc}} A) .$$

The kernel and cokernel functors are in fact a pair of of inverse equivalences. As $\mathcal{V}$ and $\mathcal{V'}$ are preserved by $\rm{Cok}$ and $\Ker$, respectively. Then we have a pair of inverse equivalences  for the corresponding quotient categories.  we will use the same symbol here for the induced pair of inverse equivalences between $F_{\CX}(\CA)/\mathcal{V}'$ and $S_{\CX}(\CA)/\mathcal{V}$.

Let $\CX$ be a subcategory of an abelian category with enough projectives and injectives. Assume $\CX$ contains all projective objects and injective ones, being functorially finite, being closed under kernel and cokernel of epimorphisms and monomorphisms, respectively.
 Let $\Phi$ and $\Phi'$ be the following compositions, respectively:
$$\mmod \underline{\CX} \st{\Psi^{-1}} \rt S_{\CX}/\mathcal{V} \st{\rm{Cok}} \rt F_{\CX}/\mathcal{V}' \st{\Psi'}  \rt \overline{\CX} \mbox{-} \rm{mod}$$

$$\mmod \underline{\CX} \st{\Psi} \leftarrow S_{\CX}/\mathcal{V} \st{\rm{Ker}} \leftarrow F_{\CX}/\mathcal{V}' \st{(\Psi')^{-1}}  \longleftarrow \overline{\CX} \mbox{-} \rm{mod}.$$
where $\Psi^{-1}$, resp. $(\Psi')^{-1}$, is the quasi-inverse of the functor $\Psi$, resp. $\Psi'$, defined in Theorem \ref{Thefirst} and Theorem \ref{Thesecound}.
Then $\Phi$ and $\Phi'$ are a pair of inverse equivalences  respect to the subcategory $\CX$.
\begin{proposition}\label{enoughinj} Let $\CX$ be a functorially finite  subcategory of an abelian category $ \CA$ that has enough projectives and injectives. Assume $\CX$ includes all projective objects  and injective objects in $\CA$ and closed under kernel of epimorphisms and cokernel of monomorphisms. Then $\mmod \underline{\CX}$ and $\overline{\CX} \mbox{-} \rm{mod}$ are both abelian categories with  enough injectives.
\end{proposition}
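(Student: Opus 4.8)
The plan is to deduce the statement by combining the equivalences established just above with the elementary fact that a contravariant equivalence interchanges projective and injective objects.

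First I would record that both categories are abelian with enough \emph{projectives}. Since $\CX$ is functorially finite it is in particular contravariantly finite and contains $\Prj\CA$, so Lemma \ref{stabel=mmodd} applies directly and shows that $\mmod\underline{\CX}$ is abelian with enough projective objects. Dually, because $\CX$ is covariantly finite and contains the injective objects of $\CA$, the opposite category $\CA^{\op}$ together with $\CX^{\op}$ satisfies the hypotheses of Lemma \ref{stabel=mmodd}; the dual version of that lemma then yields that $\overline{\CX} \mbox{-} \rm{mod}$ is abelian with enough projective objects.

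Next I would invoke the duality $\Phi \colon \mmod\underline{\CX} \rt \overline{\CX} \mbox{-} \rm{mod}$ constructed immediately before the statement as the composite $\Psi' \circ \rm{Cok} \circ \Psi^{-1}$. Here $\Psi^{-1}$ (from Theorem \ref{Thefirst}) and $\rm{Cok}$ are covariant equivalences, while $\Psi'$ (from Theorem \ref{Thesecound}) is a duality; hence $\Phi$ is a contravariant equivalence, that is, it realizes an equivalence $\mmod\underline{\CX} \simeq (\overline{\CX} \mbox{-} \rm{mod})^{\op}$.

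Finally I would transfer the ``enough projectives'' property across $\Phi$. A contravariant equivalence carries projective objects to injective objects and epimorphisms to monomorphisms, so it converts the property of having enough projectives into that of having enough injectives. Thus, since $\mmod\underline{\CX}$ has enough projectives, its dual $\overline{\CX} \mbox{-} \rm{mod}$ has enough injectives; and since $\overline{\CX} \mbox{-} \rm{mod}$ has enough projectives, its dual $\mmod\underline{\CX}$ has enough injectives. As both categories are already known to be abelian, this finishes the argument. I expect no substantial obstacle, since all the mathematical content sits in the earlier equivalences; the only point requiring care is the variance bookkeeping, namely verifying that $\Phi$ is genuinely a duality (so that projectives are sent to injectives) and checking separately that the hypotheses of Lemma \ref{stabel=mmodd} and of its dual, contravariant versus covariant finiteness and containment of projectives versus injectives, are each met.
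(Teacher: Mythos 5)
Your proposal is correct and follows essentially the same route as the paper: both arguments note that $\mmod\underline{\CX}$ and $\overline{\CX}\mbox{-}\rm{mod}$ are abelian with enough projectives (via Lemma \ref{stabel=mmodd} and its dual) and then transfer this property across the dualities $\Phi$ and $\Phi'$, which send projectives to injectives. Your version merely spells out the variance bookkeeping a little more explicitly than the paper does.
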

\begin{proof}
We know that a duality is an exact functor and mapping projective objects to injectivs. On the other hand, $\mmod \underline{\CX}$ and $\overline{\CX} \mbox{-} \rm{mod}$ have always enough projectives. Then  the dualities $\Phi$ and $\Phi'$, defined above, follow the statement. Note that by Lemma \ref{stabel=mmodd}, $\mmod \underline{\CX}$ and $\overline{\CX} \mbox{-} \rm{mod}$ are both abelian categories.
\end{proof}

Recall that a subcategory $\CX$ of $\CA$ is resolving if it contains all projectives, and if for every exact
sequence $0 \rt A \rt B \rt C \rt 0 $ in $\CA$ we have $A, C \in \CX$ implies $B \in X,$ and
$B,C \in \CX$ implies $A \in  \CX.$ Coresolving subcategories are defined dually.
\begin{example}\label{Examplefirst}
\begin{itemize}
\item [$(i)$] It is obvious whenever $\CX$ is the entire of abelian category $\CA$, then it satisfies all conditions of  Proposition \ref{enoughinj}. A non-trivial example of $\CX$ is the subcategory $\mathcal{P}^{< \infty}(\La)$ of finitely generated modules of finite projective dimension (or finite injective dimension)  on  Gorenstein algebra $\La$. It is proved
over a Gorenstein ring a module has finite projective dimension if and only if it has finite injective dimension. This fact implies that $\mathcal{P}^{< \infty}(\La)$ contains all injective modules. It is also easy to check that just by using the long exact sequence that $\mathcal{P}^{< \infty}(\La)$ is closed under kernel and cokernel of epimorphisms and monomorphisms, respectively. It remains to prove that $\mathcal{P}^{< \infty}(\La)$ is functorially finite. On Gorenstein algebras we have cotorsion pair $(\rm{Gprj} \mbox{-} \La, \mathcal{P}^{< \infty} )$ in $\mmod \La$ which is also complete, hence $\mathcal{P}^{< \infty}(\La)$ is a covariantly finite subcategory in $\mmod \La$, see e.g. \cite{EJ} for the facts concerning Gorenstein ring. Since $\mathcal{P}^{ < \infty}(\La)$ is  coresolving and covariantly finite, then so is contravariantly finite, by \cite{KS}. Hence $\mathcal{P}^{ < \infty}(\La)$ is a  functorially finite subcategory of $\mmod \La$. Let $H=H(C, D, \Omega)$ be the attached 1-Gorenstein algebra to symmetrizable generalized Cartan matrix $C$ and an orientaion $\Omega$ of $C$ and symetraizer $D$ of $C$, see \cite{GLS}. Let $\rm{rep}_{\rm{l.f.}}(H)$ be the subcategory of all locally free $H$-modules. By \cite[Theorem 1.2]{GLS} we get $\mathcal{P}^{< \infty}(H)= \rm{rep}_{\rm{l.f.}}(H)$,  so holding the conditions of \ref{enoughinj}.
\item[$(ii)$] Let $\La$ be a self injective algebra then  the stable category $\underline{\rm{mod}}\mbox{-} \La$ is a triangulated category such that whose triangles coming from short exact sequences in $\mmod \La$, see \ref{gproj}. Let $\pi: \mmod \La \rt \underline{\rm{mod}} \mbox{-} \La $ be the canonical functor and $\CY$ a sub triangulated in $\underline{\rm{mod}}\mbox{-} \La$. Then the preimage $\pi^{-1}(\CY)$ is a subcategory of $\mmod \La$ that contains projective-injective modules in $\mmod \La$, and closed under kernel, resp. cokernel,  of epimorphisms, resp. monomorphisms, that it can be shown just by using the structure of triangles in $\underline{\rm{mod}} \mbox{-} \La$. If we also assume $\La$ is of finite representation type, then $\pi^{-1}(\CY)$  becomes a  functorial finite  subcategory in $\mmod \La$, and  hence $\pi^{-1}(\CY)$ satisfies  the conditions of Proposition \ref{enoughinj}.
\item[$(iii)$] Assume that $(^{\perp} \xi, \xi, \xi^{\perp})$ is a cotorsion triple in an abelian category $\CA$ with enough projectives and injectives, that is, $(^{\perp} \xi, \xi)$ and $(\xi, \xi^{\perp})$ are   cotorsion theories in $\CA,$ see \cite[Chapter VI]{BR}. Then, by definitions, $\xi$ satisfies the conditions we need. Let us explain a way to produce new  cotorsion triple by the given one.  Let $\CQ $ be an acyclic quiver with finite  number of vertexes and set $\rm{Rep}(\CQ, R)$ the category of representations of $\CQ$ by $R$-modules and $R$-homorphismes. Denote $\CQ(\xi)$ the class of all representations $\CX$ in $\rm{Rep}(\CQ, R)$ whose vertexes are represented by an object in $\xi$. Then by using \cite[Theorem A]{EHHS} $\CQ(\xi)$ is a middle term of a triple cotorsion theory in $\rm{Rep}(\CQ, R)$. Let us point out although \cite[Theorem A]{EHHS} stated for module category of a ring but the proof is valid for any abelian category with enough projectives and injectives. For example, $(\Mod R, \rm{Inj}\mbox{-}R= \rm{Prj}\mbox{-}R, \Mod R)$ is a  cotorsion triple theory in $\Mod R$ whence $R$ is a quasi-Frobenius ring, so by this trivial  cotorsion triple we can construct  new subcategory  $\CQ(\rm{Inj}\mbox{-}R)$ in $\rm{Rep}(\CQ, R)$  satisfying our conditions.  Note that example $(i)$ is a special case of this example since $(\rm{Gprj}\mbox{-} \La, \mathcal{P}^{<\infty}(\La), \rm{Ginj}\mbox{-}\La )$ is a  cotorsion triple in $\mmod \La$ as $\La$ is a Gorenstein algebra. 
\end{itemize}
\end{example}
\begin{lemma}\label{extension}
Let $\CX$ be a subcategory in an abelian category $\CA$ with enough projective objects and injective objects satisfying the assumptions of Proposition \ref{enoughinj}. Then $\CX$ is closed under extension, i.e. if $0 \rt  X \rt  Y \rt Z \rt 0$ is exact with $X$ and $Z$ in $\CX,$ then $Y$ is in $\mathcal{X}.$
\end{lemma}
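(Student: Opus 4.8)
The plan is to turn the given extension into two short exact sequences to which the two closure hypotheses of Proposition \ref{enoughinj} apply directly, by pushing out along an injective containing $X$. So suppose $0 \rt X \rt Y \rt Z \rt 0$ is exact with $X, Z \in \CX$.

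First I would use that $\CA$ has enough injectives to embed $X$ in an injective object, giving a short exact sequence
$$0 \rt X \rt I \rt C \rt 0.$$
Since $\CX$ contains all injective objects of $\CA$ we have $I \in \CX$, and since $\CX$ is closed under cokernel of monomorphisms we get $C \in \CX$.

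Next I would form the pushout $W$ of the pair $I \leftarrow X \rt Y$, producing a commutative diagram with exact rows
$$\begin{array}{ccccccccc}
0 & \rt & X & \rt & Y & \rt & Z & \rt & 0 \\
  &     & \downarrow & & \downarrow & & \| & & \\
0 & \rt & I & \rt & W & \rt & Z & \rt & 0.
\end{array}$$
Because $I$ is injective the bottom row splits, so $W \cong I \oplus Z$; as $I, Z \in \CX$ and $\CX$ is closed under finite direct sums, $W \in \CX$. On the other hand, pushing out along the monomorphism $X \rt I$ makes the middle vertical map $Y \rt W$ a monomorphism with cokernel identified with $C$, so there is a short exact sequence
$$0 \rt Y \rt W \rt C \rt 0.$$
Finally I would invoke that $\CX$ is closed under kernel of epimorphisms: the map $W \rt C$ is an epimorphism with $W, C \in \CX$, so $Y = \Ker(W \rt C) \in \CX$, as desired.

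The steps are all standard homological algebra, and I do not anticipate a genuine obstacle; the only real choice is which construction to run. One could equally well pull back a projective cover $P \twoheadrightarrow Z$: the resulting top row $0 \rt X \rt V \rt P \rt 0$ splits (as $P$ is projective), giving $V \cong X \oplus P \in \CX$, and the vertical sequence $0 \rt \Ker(P \rt Z) \rt V \rt Y \rt 0$ exhibits $Y$ as a cokernel of a monomorphism between objects of $\CX$, so $Y \in \CX$ by closure under cokernel of monomorphisms. Both arguments use exactly the hypotheses of Proposition \ref{enoughinj}, and the injective pushout version seems the cleanest to record.
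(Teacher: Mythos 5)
Your argument is correct and is essentially the paper's own proof: both embed $X$ into an injective $I$, push out to get a split extension $W\cong I\oplus Z$ in $\CX$, and then recover $Y$ as the kernel of the epimorphism $W\rt C=\Omega^{-1}(X)$. You are slightly more careful than the paper in explicitly noting that $C\in\CX$ via closure under cokernels of monomorphisms, and the dual projective-cover variant you sketch is also valid, but there is no substantive difference in method.
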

\begin{proof}

Consider a short exact sequence
$$ 0 \rt X \rt Z \rt Y \rt 0$$
such that $X$ and $Y$ are in $\CX.$  Since $\CA$ has enough injectives then so there is a monomorphisms $X \rt I$ for some injective object $I$ in $\CA$. Now we then take the pushout to get the commutative diagram below.
$$\xymatrix{ & 0 \ar[d] & 0 \ar[d] &  &\\
0 \ar[r] & X \ar[r] \ar[d] & Z  \ar[r] \ar[d] & Y  \ar@{=}[d]\ar[r] &  0\\
0 \ar[r] & I \ar[r] \ar[d] & U  \ar[r] \ar[d] & Y \ar[r] & 0\\
 & \Omega^{-1}(X) \ar@{=}[r] \ar[d] & \Omega^{-1}(X) \ar[d] & & \\
& 0  & 0 & & }$$
Recall as we fixed for subcategories in this paper to be closed under finite sums, we can deduce that $U\simeq I\oplus Y$ belongs to $\CX$. Since $\CX$ is closed under kernel of epimorphisms, the short exact sequence in the middle culmen in the above diagram implies that $Z \in \CX$. So we are done.

\end{proof}
As an immediate application of the above Lemma we have:
\begin{corollary}
Let $\CX$ be a subcategory in an abelian category $\CA$ with enough projective objects and injective objects. Then $\CX$ holding conditions \ref{enoughinj} if and only if it is functorialy finite, resolving and corsolving subcategory of $\CA.$
\end{corollary}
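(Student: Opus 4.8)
The plan is to recognise that the two conditions in the biconditional are literally the same list of closure properties, once one supplements the hypotheses of Proposition~\ref{enoughinj} with closure under extensions, which is furnished by Lemma~\ref{extension}. First I would lay both lists side by side. The hypotheses of Proposition~\ref{enoughinj} are: $\CX$ is functorially finite, contains all projective and all injective objects of $\CA$, and is closed under kernels of epimorphisms and under cokernels of monomorphisms. Unwinding the definitions recalled immediately before the statement, $\CX$ resolving means it contains all projectives and that in every short exact sequence $0 \rt A \rt B \rt C \rt 0$ one has $A,C \in \CX \Rightarrow B \in \CX$ and $B,C \in \CX \Rightarrow A \in \CX$; dually, $\CX$ coresolving means it contains all injectives and is closed under extensions and under cokernels of monomorphisms.

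For the implication from functorially finite, resolving and coresolving to the conditions of Proposition~\ref{enoughinj}, I would simply read off what is needed: functorial finiteness is assumed outright; resolving delivers both that $\CX$ contains the projectives and that it is closed under kernels of epimorphisms; and coresolving delivers that $\CX$ contains the injectives and is closed under cokernels of monomorphisms. This reproduces the hypothesis list of Proposition~\ref{enoughinj} verbatim, so this direction needs no further argument.

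For the converse, the only property demanded by resolving and coresolving that is not already among the hypotheses of Proposition~\ref{enoughinj} is closure under extensions. This is precisely the content of Lemma~\ref{extension}, which guarantees that any $\CX$ satisfying the assumptions of Proposition~\ref{enoughinj} is closed under extensions. Combining this with the explicitly assumed facts --- containing projectives and closure under kernels of epimorphisms on the one hand, containing injectives and closure under cokernels of monomorphisms on the other --- shows at once that $\CX$ is simultaneously resolving and coresolving, while functorial finiteness is carried over unchanged. The single nontrivial step, and hence the main obstacle, is this appeal to Lemma~\ref{extension}; everything else amounts to matching the two definitions term by term.
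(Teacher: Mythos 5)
Your proposal is correct and follows exactly the route the paper intends: the corollary is stated there as an immediate consequence of Lemma \ref{extension}, with the forward direction being a term-by-term match of definitions and the converse requiring only the closure under extensions supplied by that lemma. Nothing is missing.
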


\begin{remark}
We refer to \cite[Chapter VI]{BR} for  more properties of this sort of subcategories satisfying the above equivalent conditions. The intersting point in Theorem 3.2 of \cite[Chapter VI]{BR} is if $\CX$ satisfying the condition, i.e, functorially finite resolving and coresolving subcategory of a Krull-Schmidt $\CA$ with enough projective and injective objects,  then the converse  is true, meaning there exist a cotorsion triple in $\CA$ such that $\CX$ is the middle term of it. In fact, in this case, there exists a bijection between cotorsion triple and this sort of subcategories, see Corollary 4.10 of \cite[Chapter VI]{BR}.
\end{remark}

\subsection{ A relative version of   Auslander's conjecture}\label{AusCOnjsec} In this subsection,  we  investigate Auslander's conjecture sated in the Introduction. We show that a direct summand of $\rm{Ext}$-functor over an abelian category sufficiently nice is again of that form, in addition, a relative version of the conjecture will also  be given.

 Let $\CX$ be a subcategory of an abelian category with enough projectives and injectives. The   functor $\Ext_{\CA}^1(-,X)\vert_{\CX}$, (resp. $\Ext_{\CA}^1(X, -)\vert_{\CX}$) is only the restriction of the usual $\Ext$-functor  on $\CX$, we will delete $\vert_{\CX}$ whenever $\CX$ is the entire of abelian category $\CA$. Recall that as we fixed before to simplify we use $(-, \underline{X})$ for $\underline{\CX}(-, X),$ (resp.  $(\overline{X}, -)$ for $\overline{\CX}(X, -)$,) for each $X$ in $\CX.$

\begin{theorem}\label{AusConj} Let $\CX$  be a subcategory of an abelian category $ \CA$ with enough projectives and injectives,  and  $\CX$  holding  the conditions of proposition \ref{enoughinj}. Then
\begin{itemize}
\item[$(i)$]
If $\underline{\CX}$ has split idempotents, then
\begin{itemize}
\item[$(1)$] For every object  $A$ in $\CX$, if $F$ is a direct summand of $\Ext_{\CA}^1 (A, -)\vert_{\CX}$, then there exists $B \in \CX$ such that $F\simeq \Ext_{\CA}^1 (B,-)\vert_{\CX}$
\item[$(2)$]In particular, for Krull-schmit  category $\CX=\CA$ the Auslander's conjecture is true.
\end{itemize}
\item[$(ii)$]
If $\overline{\CX}$ has split idempotents, then
\begin{itemize}
\item[$(1)$] For every object  $A$ in $\CX$, if $F$ is a direct summand of $\Ext_{\CA}^1 (-, A)\vert_{\CX}$, then there exists $B \in \CX$ such that $F\simeq \Ext_{\CA}^1 (-,B)\vert_{\CX}$
\item[$(2)$]In particular, for Krull-Schmit  category $\CX=\CA$ the dual of Auslander's conjecture is true.
\end{itemize}
\end{itemize}
\end{theorem}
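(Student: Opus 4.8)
The plan is to reinterpret each relative $\Ext$-functor as an injective object of one of the two finitely presented functor categories $\mmod \underline{\CX}$ and $\overline{\CX} \mbox{-} \rm{mod}$, and then to run the direct-summand argument inside that abelian category by means of the duality $\Phi$ constructed just before Proposition \ref{enoughinj}. The two parts being formally dual (replace $(\CA,\CX)$ by $(\CA^{\op},\CX^{\op})$), I would prove $(i)$ in full and deduce $(ii)$ by duality. So fix $A \in \CX$ and a direct summand $F$ of $\Ext^1_{\CA}(A,-)\vert_{\CX}$.

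First I would represent the relative $\Ext$'s through the functors $\Psi$ and $\Psi'$ of Theorems \ref{Thefirst} and \ref{Thesecound}. Pick a projective cover $P \rt A$; its kernel $\Omega A$ lies in $\CX$ because $\CX$ is resolving, so $(P \rt A)$ is an object of $F_{\CX}(\CA)$, and the four-term sequence defining $\Psi'(P \rt A)$ is precisely the one computing $\Ext^1$ from this projective presentation. Hence $\Psi'(P \rt A) \cong \Ext^1_{\CA}(A,-)\vert_{\CX}$, which in particular shows that this functor is finitely presented and vanishes on injectives, i.e. it is a genuine object of $\overline{\CX} \mbox{-} \rm{mod}$. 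Dually, applying $\Psi$ to a monomorphism $A \hookrightarrow I$ into an injective gives $\Psi(A \hookrightarrow I) \cong \Ext^1_{\CA}(-,A)\vert_{\CX}$ in $\mmod \underline{\CX}$.

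The decisive computation is the value of the duality $\Phi = \Psi' \circ \rm{Cok} \circ \Psi^{-1}$ on a representable functor. Unwinding the three equivalences, $(-,\underline{X})$ is $\Psi(\Omega X \hookrightarrow P)$ for a projective cover $P \rt X$, its image under $\rm{Cok}$ is the epimorphism $(P \rt X)$, and $\Psi'$ sends this to $\Ext^1_{\CA}(X,-)\vert_{\CX}$ by the computation of the previous paragraph; thus $\Phi((-,\underline{X})) \cong \Ext^1_{\CA}(X,-)\vert_{\CX}$. Since $(-,\underline{A})$ is projective in $\mmod \underline{\CX}$ by Yoneda and a duality carries projectives to injectives, $\Ext^1_{\CA}(A,-)\vert_{\CX} \cong \Phi((-,\underline{A}))$ is injective in $\overline{\CX} \mbox{-} \rm{mod}$; as the latter is abelian (Proposition \ref{enoughinj}), its direct summand $F$ is injective as well.

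To finish $(i)(1)$ I would invoke the hypothesis that $\underline{\CX}$ has split idempotents: by the Yoneda fact recalled in the functor-category subsection (applied to $\underline{\CX}$), every projective of $\mmod \underline{\CX}$ is then representable, of the form $(-,\underline{B})$. As $\Phi$ is an anti-equivalence it is dense, and an object of $\overline{\CX} \mbox{-} \rm{mod}$ is injective precisely when its $\Phi$-preimage is projective in $\mmod \underline{\CX}$; hence the injective $F$ equals $\Phi(Q)$ for a projective $Q \cong (-,\underline{B})$, whence $F \cong \Phi((-,\underline{B})) \cong \Ext^1_{\CA}(B,-)\vert_{\CX}$. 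For $(i)(2)$ I specialize to $\CX = \CA$ Krull--Schmidt, where $\underline{\CA}$ is again Krull--Schmidt and so has split idempotents, recovering Auslander's conjecture verbatim. Part $(ii)$ is the mirror argument, computing $\Phi'((\overline{X},-)) \cong \Ext^1_{\CA}(-,X)\vert_{\CX}$ and using split idempotents of $\overline{\CX}$. The main obstacle I expect is the bookkeeping in the third paragraph: threading $(-,\underline{X})$ through $\Psi^{-1}$, $\rm{Cok}$ and $\Psi'$ while tracking co/contravariance and the arrow-reversal of the duality, so that the identification $\Phi((-,\underline{X})) \cong \Ext^1_{\CA}(X,-)\vert_{\CX}$ is pinned down precisely rather than left ambiguous modulo $\mathcal{V}$ and $\mathcal{V}'$.
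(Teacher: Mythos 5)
Your proposal is correct and follows essentially the same route as the paper: the core step in both is the computation $\Phi((-,\underline{X}))\simeq \Ext^1_{\CA}(X,-)\vert_{\CX}$ obtained by threading the representable functor through $\Psi^{-1}$, $\mathrm{Cok}$ and $\Psi'$, followed by the observation that under the split-idempotents hypothesis the projectives of $\mmod\underline{\CX}$ are exactly the representables, so the duality $\Phi$ identifies direct summands of $\Ext^1_{\CA}(A,-)\vert_{\CX}$ (which are injective) with images of representables. Your extra bookkeeping (identifying $\Psi'(P\rt A)$ with the relative $\Ext$ and noting that $F$ is injective as a summand of an injective) only makes explicit what the paper leaves implicit.
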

\begin{proof}
By definition of $\Phi$, it  can be seen that for  $X \in \CX$, $\Phi(-, \underline{X})= \Ext_{\CA}^1(X, -)\vert_{\CX}$, (resp. $\Phi'( \overline{X}, -)=\Ext_{\CA}^1(-, X)\vert_{\CX}$ ). Let us check one of them. We have the following  short exact sequence
\begin{equation*}
0 \rt (-, \Omega(X)) \rt (-, P) \rt (-, X) \rt (-, \underline{X}) \rt 0
\end{equation*}
 in the category $\mmod \CX$ and
\begin{equation*}
0 \rt (X, -) \rt (P, -) \rt (\Omega(X), -) \rt \Ext_{\CA}^1(X, -)\vert_{\CX} \rt 0
\end{equation*}
in the category $\CX \mbox{-} \rm{mod}.$ Then these two short exact sequences help us to calculate $\Phi((-, \underline{X}))$ as follows:
 \begin{align*}
       \Phi((-, \underline{X}))&=\Psi' \circ \rm{Cok} \circ \Psi^{-1} ((-, \underline{X})) \\
       &= \Psi' \circ \rm{Cok}(\Omega(X)\rt P) \\
       &=\Psi' (P \rt X)\\
       & \simeq \Ext_{\CA}^1(X, -)\vert_{\CX}.
    \end{align*}
    
It is known that the projective objects of $\mmod \mathcal{C}$ are precisely the representable functors for a preadditive category $\mathcal{C}$ that has split idempotents and has finite direct sums. Now the statements can be followed by  dualities $\Phi$ and $\Phi',$ have already constructed.

\end{proof}
We remark that if $\CA$ is a Krull-schmit category, then the conjecture is valid. Thus Theorem \ref{AusConj} generalize \cite{M} to a Krull-Schmit abelian category. See also Proposition \ref{AUSGPrj} for another  subcategories (not necessary satisfying assumption of \ref{enoughinj}) such that a relative version of this conjecture holds.

As an immediate consequence of the proof of Theorem \ref{AusConj} is:
\begin{corollary}Let $\CX$  be a subcategory  of abelian category $ \CA$ and assume $\CX$ and $\CA$ hold the conditions of proposition \ref{enoughinj}. Then
\begin{itemize}
\item[$(i)$]
If $\underline{\CX}$ has split idempotents, then any injective functor  in $\overline{\CX} \mbox{-} \rm{mod}$ is of the form $\Ext_{\CA}^1(-, B)\vert_{\CX}$ for some $B$ in $\CX$.
\item[$(ii)$]
If $\overline{\CX}$ has split idempotents, then any injective functor  in $\mmod \underline{\CX} $ is of the form $\Ext_{\CA}^1(-, B)\vert_{\CX}$ for some $B$ in $\CX$
\end{itemize}

\end{corollary}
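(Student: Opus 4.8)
The plan is to read both parts straight off the pair of inverse dualities $\Phi$ and $\Phi'$ together with the computation already performed in the proof of Theorem \ref{AusConj}. Recall from there that, for every $X\in\CX$, the representable functors are carried to $\Ext$-functors,
\[
\Phi\big((-,\underline{X})\big)\simeq \Ext^1_{\CA}(X,-)\vert_{\CX},\qquad
\Phi'\big((\overline{X},-)\big)\simeq \Ext^1_{\CA}(-,X)\vert_{\CX}.
\]
By Proposition \ref{enoughinj} the categories $\mmod\underline{\CX}$ and $\overline{\CX}\mbox{-}\rm{mod}$ are abelian, and $\Phi,\Phi'$ are a pair of inverse dualities between them. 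The first thing I would record is the elementary categorical principle that an (exact) duality interchanges projective and injective objects: if $I$ is injective, then for a short exact sequence in the target category I may transport it across the contravariant equivalence to a short exact sequence in the source, and applying $\Hom(-,I)$ there shows that $\Hom(\Phi(I),-)$ (resp. $\Hom(\Phi'(I),-)$) is exact, i.e. that $\Phi(I)$ (resp. $\Phi'(I)$) is projective. Since the quasi-inverse is again a duality, this yields a bijection, up to isomorphism, between the injectives of one category and the projectives of the other.

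For part $(ii)$ I would take an arbitrary injective functor $J$ in $\mmod\underline{\CX}$ and apply the duality $\Phi$, obtaining a projective object $\Phi(J)$ of $\overline{\CX}\mbox{-}\rm{mod}$. Here I invoke the fact already used in the proof of Theorem \ref{AusConj}: for a preadditive category with split idempotents and finite direct sums, the projective objects of its finitely presented functor category are exactly the representable ones. As $\overline{\CX}$ has split idempotents by hypothesis (and inherits finite direct sums from $\CX$), we get $\Phi(J)\simeq(\overline{X},-)$ for some $X\in\CX$. Applying $\Phi'$ and the displayed computation then gives $J\simeq\Phi'\big((\overline{X},-)\big)\simeq\Ext^1_{\CA}(-,X)\vert_{\CX}$, the asserted form with $B=X$. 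Part $(i)$ is the mirror image: starting from an injective $J$ in $\overline{\CX}\mbox{-}\rm{mod}$, I would transport it by $\Phi'$ to a projective of $\mmod\underline{\CX}$, which under the split-idempotent hypothesis on $\underline{\CX}$ is representable of the form $(-,\underline{X})$, whence $J\simeq\Phi\big((-,\underline{X})\big)\simeq\Ext^1_{\CA}(X,-)\vert_{\CX}$ (note that the injectives of $\overline{\CX}\mbox{-}\rm{mod}$ are necessarily \emph{covariant} $\Ext$-functors, which fixes the direction of the variables).

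The argument is routine once the machinery is assembled, so there is no genuinely hard obstacle; the two points that deserve care are the following. First, I must know that $\Phi$ and $\Phi'$ are \emph{exact} dualities, so that the interchange of projectives and injectives is legitimate; this is guaranteed because they are built from the equivalences $\Psi,\Psi'$ of Theorems \ref{Thefirst} and \ref{Thesecound} together with the exact $\rm{Cok}/\Ker$ equivalences. Second, I must be sure that \emph{every} injective is captured and not merely those arising as images of representables — but this is exactly the density (essential surjectivity) of the equivalence: each injective $J$ is isomorphic to the image under the duality of its preimage, and that preimage is forced to be projective, hence representable, by the previous step.
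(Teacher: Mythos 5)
Your argument is correct and is precisely the route the paper intends: the corollary is presented as an immediate consequence of the proof of Theorem \ref{AusConj}, namely that the mutually inverse dualities $\Phi,\Phi'$ exchange projectives and injectives, that split idempotents force the projectives of the functor categories to be representable, and that $\Phi\big((-,\underline{X})\big)\simeq\Ext^1_{\CA}(X,-)\vert_{\CX}$ and $\Phi'\big((\overline{X},-)\big)\simeq\Ext^1_{\CA}(-,X)\vert_{\CX}$. Your parenthetical remark on variance in part $(i)$ is also well taken: the injectives of $\overline{\CX}\mbox{-}\rm{mod}$ are covariant functors of the form $\Ext^1_{\CA}(B,-)\vert_{\CX}$, so the published statement of $(i)$ contains a typo that your proof correctly repairs.
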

Theorem \ref{AusConj} says the validity of Auslander's conjecture  for an abelian category with enough projective and injective is equivalent to check that when its stable category has split idempotents. So by combining our results together with \cite{Au2}  we have:
\begin{proposition}
Let $\CA$ be an abelian category with enough projectives and injectives. Suppose either of
the following situations is satisfied:
\begin{itemize}
\item [(i)] $\CA$ has countable coproducts.
\item [(ii)] $\rm{gldim} \mbox{-} \CA$, supermom of projective dimension of all abjects, is finite.
\end{itemize}
Then $\underline{\CA}$ has split idempotents.
\end{proposition}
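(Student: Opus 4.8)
The plan is to reduce the splitting of an idempotent in $\underline{\CA}$ to a lifting problem that either hypothesis can then resolve. So let $\underline{e}\colon \underline{X}\to\underline{X}$ be an idempotent and lift it to some $e\in\End_{\CA}(X)$; then $e^{2}-e$ lies in the ideal $\CP(X,X)$ of morphisms factoring through a projective. Since $\CA$ is abelian it is already idempotent complete, so it suffices to promote $\underline{e}$ to a genuine idempotent $\epsilon=\epsilon^{2}\in\End_{\CA}(X)$ with $\underline{\epsilon}=\underline{e}$: splitting $\epsilon$ in $\CA$ as $X=X_{1}\oplus X_{2}$ then exhibits $\underline{e}$ as the projection onto $\underline{X_{1}}$ in $\underline{\CA}$. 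Thus everything comes down to lifting idempotents modulo the ideal $\CP$, and I would treat the two hypotheses separately.

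For case $(i)$ I would first check that $\underline{\CA}$ inherits countable coproducts from $\CA$. A countable coproduct of projectives is projective, so the canonical functor $\pi\colon\CA\to\underline{\CA}$ carries $\bigoplus_{n}X_{n}$ to a coproduct in $\underline{\CA}$: existence is clear, and if two maps $\bigoplus_{n}X_{n}\to W$ agree with a prescribed family modulo $\CP$, then their difference restricts on each $X_{n}$ to a map through a projective $P_{n}$, hence factors through the projective $\bigoplus_{n}P_{n}$ and so vanishes in $\underline{\CA}$. With countable coproducts in hand I would invoke the standard fact that every additive category admitting countable coproducts is Karoubian; applied to $\underline{\CA}$ this yields at once that idempotents split, producing the summand $X_{1}$ automatically.

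For case $(ii)$ I would pass to the finitely presented functor category. Taking $\CX=\CA$ in Lemma \ref{stabel=mmodd} (the identity furnishes right approximations, so $\CA$ is trivially contravariantly finite in itself), the category $\mmod\underline{\CA}$ is abelian with enough projectives, and being abelian it is idempotent complete. The idempotent $\underline{e}$ induces an idempotent on the representable object $(-,\underline{X})$, which therefore splits as $(-,\underline{X})\cong F\oplus G$ with $F,G$ projective in $\mmod\underline{\CA}$. By the characterization of projective functors recalled in Section 2, splitting $\underline{e}$ in $\underline{\CA}$ is then equivalent to the representability of the summand $F$, i.e. to every projective of $\mmod\underline{\CA}$ being of the form $(-,\underline{Y})$. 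This is exactly where finiteness of $\gldim\CA$ enters through \cite{Au2}: when $\gldim\CA\le d$ one has $\Omega^{d}=0$ on $\underline{\CA}$, so each $\underline{X}$ is represented by a finite complex of projectives, the idempotent may be lifted to the bounded homotopy category of projectives (which is idempotent complete), and the resulting splitting descends to an honest idempotent $\epsilon$ on $X$, hence to a splitting of $\underline{e}$.

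The main obstacle is case $(ii)$. In the absence of a Krull--Schmidt hypothesis one cannot simply appeal to local endomorphism rings to lift idempotents modulo $\CP$, and the representability of $F$ — equivalently the honest lifting of $\underline{e}$ — is genuinely delicate; it is here that both the finiteness of the global dimension (which bounds syzygies and replaces $\underline{X}$ by a finite projective object up to homotopy) and the functorial input of \cite{Au2} are indispensable. Case $(i)$, by contrast, is formal once the coproducts are seen to descend.
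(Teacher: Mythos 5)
The paper does not actually prove this proposition: it is stated as an import from Auslander's \emph{Comments on the functor} $\Ext$ \cite{Au2} (note the sentence ``by combining our results together with \cite{Au2} we have'' and the absence of any proof environment). So your argument has to stand on its own, and in both branches it rests on a step that fails.

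In case $(i)$, your observation that countable coproducts descend from $\CA$ to $\underline{\CA}$ is correct (a countable coproduct of projectives is projective, so a family of factorizations through $P_n$ assembles to one through $\bigoplus_n P_n$). But the ``standard fact'' you then invoke --- that every additive category with countable coproducts is Karoubian --- is false. The category of all free modules over a ring possessing a finitely generated projective not isomorphic to any free module (e.g.\ a Dedekind domain with nontrivial class group) is additive with all coproducts, yet the idempotent on $R^{n}$ cutting out such a projective cannot split there. The correct statement of this type is B\"okstedt--Neeman's result for \emph{triangulated} categories with countable coproducts, whose proof forms the homotopy colimit of $X \xrightarrow{e} X \xrightarrow{e} \cdots$ as a cone; $\underline{\CA}$ is in general only a left triangulated category (it has $\Omega$ and left triangles, not cones), so that machinery is not available without further input. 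Auslander's argument uses the abelian structure of $\CA$ upstairs, where the telescope colimit genuinely exists, rather than the bare additive structure of $\underline{\CA}$.

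In case $(ii)$, the decisive step --- lifting $\underline{e}$ to an idempotent of the bounded homotopy category of projectives and descending the resulting splitting --- is exactly the point that is not justified, and it fails as stated. If $P^{\bullet}\to X$ is a finite projective resolution and $\tilde{e}$ a chain lift of $e$, then $\tilde{e}^{2}-\tilde{e}$ lifts a morphism of $\CA$ factoring through a projective; but such morphisms are \emph{not} null-homotopic on resolutions (the identity of a projective object factors through a projective and lifts to the identity chain map), so $\tilde{e}$ need not become an honest idempotent in $K^{\rm b}(\Prj\CA)$ and the idempotent completeness of that category cannot be invoked. One would have to pass to a quotient of $K^{\rm b}(\Prj\CA)$ that actually computes $\underline{\CA}$, or run Auslander's induction on syzygies using $\Omega^{d}=0$; as written, the heart of case $(ii)$ is missing, as your own closing paragraph essentially concedes.
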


\subsection{The second equivalence}
The aim of  this section is to present another connection between categories of morphisms and (covariant) functors. Since, this turn,  finitely presented injective functors play a significant role, we need to explain some known results in concern of them.\\
 \begin{notation}
  For a left module $M$, let $-\otimes _{\Lambda}M:\mmod \Lambda \rt \CA b$ be the covariant functor, simply, by sending
  right $\La$-module $N$ to $N \otimes_{\La} M$. For a subcategory $\CX$ of $\mmod \La^{\op}$  denote by $-\otimes _{\Lambda}M \vert_{\CX}$, the restriction of $-\otimes _{\Lambda}M$ on the  subcategory $\CX.$   Also, in a similar way, we can define notation  $\Hom_{\La}(-, M)\vert_{\CX}$ and  $\Hom_{\La}(M, -)\vert _{\CX}$. For abbreviation, we usually  use $(-, M)\vert_{\CX}$
  $(M, -)\vert_{\CX}$, and moreover, as we did before,  $\vert_{\CX}$ will be deleted  whenever $M \in \CX.$
 \end{notation}

Let $R$ be an arbitrary ring. It is proved by Auslander \cite[Lemma 6.1]{Au1} that for a left $R$-module $M$, the covariant functor $ - \otimes_R M$ is finitely presented if and only if $M$ is a finitely presented left $R$-module. It is known that there is a full and faithful functor $T: R{\mbox{-}\rm{mod}} \lrt (\mmod R)\mbox{-}{\rm mod}$, defined by the attachment $M \mapsto {( - \otimes_R M)}$. Gruson and Jensen \cite[5.5]{GJ} showed that the category $(\mmod R)\mbox{-}{\rm mod}$ has enough injective objects and injectives are exactly those functors isomorphic to a functor of the form $ - \otimes_RM$, for some left $R$-module $M$, see also \cite[Proposition 2.27]{Pr}.

It is natural to ask what we can say for $\rm{Inj}(\CX{\mbox{-}}{\rm mod})$, when $\CX$ is a subcategory of module category. In the case that ring $R$ is an artin algebra $\Lambda$ we have the following result for  $\rm{Inj}(\CX{\mbox{-}}{\rm mod})$.

Let $\CX$  be a subcategory of $\mmod\La$ which is a dualizing $k$-variety.  Note that $\mmod \Lambda$ itself  is  a dualizing $k$-variety. Consider  the following commutative diagram
\[\xymatrix{\mmod(\mmod\La) \ar[rr]^{D} \ar[d]^{{\vert}_{{}_{\CX}}} && (\mmod \La)\mbox{-}{\rm mod} \ar[d]^{{\vert}_{{}_{\CX}}} \\ \mmod\CX \ar[rr]^{D} && \CX{\mbox{-}\rm{mod}} }\]
where $D$ in the rows are the corresponding  dualities for the dualizing $k$-varieties $\mmod \La$ and $\CX$,  and the functors in  columns are the corresponding restriction functors  on the subcategory $\CX$.

Since $D: \mmod (\mmod \La) \rt (\mmod \La)\mbox{-}{\rm mod}$ is a duality, it sends projective objects to the injective ones. Hence, in view of  the Gruson and Jensen's result, we may deduce that for every $M \in \mmod\La$, there exists a left $\La$-module $M'$ such that $D(\Hom_{\La}(-,M)) \simeq - \otimes_{\La}M'$. $M'$ is uniquely determined up to isomorphism, thanks to the fullness and  faithfulness of the functor $T$.\\

This isomorphism can be restricted to $\CX$, to induce the following isomorphism
\[D(\Hom_{\La}(-,M))\vert_{\CX} \simeq  - \otimes_{\La}M'\vert_{\CX}.\]
In the  case $M \in \CX$, this can be written more simply as \[D((-,M)) \simeq  - \otimes_{\La}M'\vert_{\CX}.\]
If $\La$, as a right module,  belongs to $\CX$, the by putting it in the above isomorphism we have $M'=D(M)$, here $D$ is the usual duality for $\mmod \La.$\\
Therefore, we have the following proposition:

\begin{proposition}\label{InjStable}
Let  $\CX$ be a subcategory of $\mmod \La$ which is a  dualizing $k$-variety subcategory and  containing $\La$, as a right $\La$-module. Then $\CX{\mbox{-}{\rm mod}}$ has enough injectives. Injective objects are those functors of the form $- \otimes M'\vert_{\CX}$, where $M'=D(X)$ for a unique object up to isomorphism $X$ in $\CX$.
\end{proposition}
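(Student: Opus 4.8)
The plan is to deduce everything from the duality attached to the dualizing $k$-variety $\CX$, together with the commutative square relating it to the ambient duality on $\mmod \La$. First I would record that, since $\CX$ is a dualizing $k$-variety, the functor $D$ in the bottom row of the displayed diagram is a duality $D\colon \mmod\CX \lrt \CX\mbox{-}{\rm mod}$, that is, an exact contravariant equivalence. The category $\mmod\CX$ always has enough projectives, the representable functors $\CX(-,X)$ being projective and every finitely presented functor admitting a projective presentation; applying the exact contravariant $D$ to such a presentation shows that $\CX\mbox{-}{\rm mod}$ has enough injectives, and that its injective objects are precisely the images $D(P)$ of projectives $P$ of $\mmod\CX$. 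Because $\CX$, being a dualizing $k$-variety, has split idempotents, every projective of $\mmod\CX$ is of the form $\CX(-,X)=\Hom_\La(-,X)\vert_\CX$ for some $X\in\CX$; hence the injectives of $\CX\mbox{-}{\rm mod}$ are exactly the functors $D(\CX(-,X))$, $X\in\CX$.

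Next I would identify $D(\CX(-,X))$ with a tensor functor, which is where the preceding discussion does the real work. By commutativity of the displayed square, $D(\CX(-,X))=D(\Hom_\La(-,X)\vert_\CX)=D(\Hom_\La(-,X))\vert_\CX$, where the outer $D$ is now the duality on the dualizing $k$-variety $\mmod\La$. By the Gruson--Jensen description of the injectives of $(\mmod\La)\mbox{-}{\rm mod}$, together with the full faithfulness of the functor $T$, we have $D(\Hom_\La(-,X))\simeq -\otimes_\La M'$ for a uniquely determined finitely presented left $\La$-module $M'$; and the hypothesis $\La\in\CX$ is exactly what forces $M'=D(X)$, where $D$ here is the ordinary duality of $\mmod\La$. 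Restricting to $\CX$ then yields $D(\CX(-,X))\simeq (-\otimes_\La D(X))\vert_\CX$. Since $D(X)$ is a finitely presented left $\La$-module, Auslander's lemma guarantees that $-\otimes_\La D(X)$ is finitely presented, so its restriction genuinely lies in $\CX\mbox{-}{\rm mod}$. This establishes that the injectives of $\CX\mbox{-}{\rm mod}$ are precisely the functors $(-\otimes_\La M')\vert_\CX$ with $M'=D(X)$, $X\in\CX$.

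Finally, for the uniqueness clause, the assignment $X\mapsto \CX(-,X)$ is injective on isomorphism classes by the Yoneda lemma, and $D$ is a duality, so the composite $X\mapsto D(\CX(-,X))\simeq (-\otimes_\La D(X))\vert_\CX$ furnishes a bijection between isomorphism classes of objects of $\CX$ and isomorphism classes of injective objects of $\CX\mbox{-}{\rm mod}$, giving the asserted uniqueness of $X$. The step I expect to require the most care is the identification in the second paragraph: one must be sure that the abstract variety duality on $\mmod\CX$ is genuinely the restriction of the ambient duality on $\mmod\La$ (the commutativity of the square), and that restriction to $\CX$ carries the Gruson--Jensen injective $-\otimes_\La D(X)$ to a functor that is still finitely presented and still injective in $\CX\mbox{-}{\rm mod}$. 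It is precisely the hypothesis $\La\in\CX$ that pins down the left module as $D(X)$ rather than leaving it an unidentified $M'$, so this assumption cannot be dropped in the stated form.
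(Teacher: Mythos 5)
Your argument is correct and follows essentially the same route as the paper: the duality of the dualizing $k$-variety $\CX$ carries the projectives $\CX(-,X)$ of $\mmod\CX$ to the injectives of $\CX\mbox{-}{\rm mod}$, the commutative square identifies this with the restriction of the ambient duality on $\mmod(\mmod\La)$, Gruson--Jensen gives $D(\Hom_\La(-,X))\simeq -\otimes_\La M'$, and evaluating at $\La\in\CX$ pins down $M'=D(X)$. The only additions beyond the paper's own discussion are the explicit justification of uniqueness via Yoneda and the appeal to Auslander's lemma for finite presentability, both of which are consistent with what the paper leaves implicit.
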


For a subcategory $ \rm{proj }\mbox{-} \La \subseteq \CX \subseteq \mmod \La$, denote by  $\CX \mbox{-}^0 \rm{mod}$ consisting those functors of $\CX \mbox{-} \rm{mod}$ vanish on projective objects. Note that $\CX \mbox{-}_0 \rm{mod}$, defined in \ref{stableC},  including those  functors of $\CX \mbox{-} \rm{mod}$  vanish on injective modules which is different  of  $\CX \mbox{-}^0 \rm{mod}$ that whose objects vanish on projective modules.
Except in case of  when $\rm{prj} \mbox{-} \La= \rm{inj} \mbox{-} \La$, then $\CX \mbox{-}^0 \rm{mod}=\CX \mbox{-}_0 \rm{mod}.$

\begin{lemma}\label{AUSlemma}
Let $\CX$ be a  subcategory of $\mmod \La$ and containing $\rm{proj} \mbox{-} \La$. Then for each left module $M \in \La \mbox{-}\rm{mod}$, the functor $- \otimes_{\La}M\vert_{\CX}$ becomes an object in $\CX \mbox{-}\rm{mod}.$
\end{lemma}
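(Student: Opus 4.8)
The plan is to write down an explicit finite presentation of $-\otimes_\La M\vert_\CX$ by representable functors living on $\CX$, using a projective presentation of $M$ as a left module rather than the global finite presentation on $\mmod\La$. First, since $\La$ is an artin algebra and $M \in \La\mbox{-}\rm{mod}$, I would fix a projective presentation
$$P_1 \st{d}\lrt P_0 \lrt M \lrt 0$$
with $P_0, P_1$ finitely generated projective left $\La$-modules.

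Next, for each right module $N$ the functor $N\otimes_\La -$ is right exact, so tensoring the presentation gives a right-exact sequence
$$N\otimes_\La P_1 \lrt N\otimes_\La P_0 \lrt N\otimes_\La M \lrt 0,$$
natural in $N$. The device that makes this usable is the evaluation isomorphism $N\otimes_\La P \cong \Hom_\La(\Hom_\La(P,\La), N)$, natural in $N$ and valid for every finitely generated projective left $\La$-module $P$; here $\Hom_\La(P,\La)$ is a finitely generated projective \emph{right} $\La$-module. Putting $X_i := \Hom_\La(P_i,\La)$, this rewrites the sequence above as a natural exact sequence
$$\Hom_\La(X_1, N) \lrt \Hom_\La(X_0, N) \lrt N\otimes_\La M \lrt 0.$$

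Finally I would restrict to $\CX$. Because $P_0, P_1$ are finitely generated projective, the right modules $X_0, X_1$ lie in $\prj\La$, and the hypothesis $\prj\La \subseteq \CX$ guarantees $X_0, X_1 \in \CX$. Thus $\Hom_\La(X_i, -)\vert_\CX = \CX(X_i, -)$ are exactly the representable (projective) objects of $\CX\mbox{-}\rm{mod}$, and the restricted sequence
$$\CX(X_1, -) \lrt \CX(X_0, -) \lrt -\otimes_\La M\vert_\CX \lrt 0$$
is a finite presentation, so $-\otimes_\La M\vert_\CX \in \CX\mbox{-}\rm{mod}$, as desired.

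The one step needing genuine care is the naturality of the evaluation isomorphism, together with the verification that under it the map induced by $d$ corresponds to $\Hom_\La(\Hom_\La(d,\La), N)$; this is the technical heart, though it is routine. It is worth stressing why invoking Auslander's result that $-\otimes_\La M$ is finitely presented over $\mmod\La$ does not by itself finish the argument: the representables occurring there could be $\Hom_\La(N_i,-)$ with $N_i \notin \CX$, and mere restriction would not yield a presentation over $\CX$. It is precisely the use of a \emph{projective} presentation of $M$ that forces the representing objects $X_i$ into $\prj\La \subseteq \CX$, which is where the hypothesis on $\CX$ is consumed.
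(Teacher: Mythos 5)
Your proposal is correct and is essentially the argument the paper relies on: the paper's proof consists of the single line ``the same argument as in \cite[Lemma 6.1]{Au1} works here,'' and Auslander's argument is exactly the one you spell out --- tensor a finite projective presentation $P_1\rt P_0\rt M\rt 0$ of the left module $M$, use the natural isomorphism $N\otimes_\La P\cong\Hom_\La(\Hom_\La(P,\La),N)$ for finitely generated projective $P$, and observe that the resulting representing objects lie in $\prj\La\subseteq\CX$. You have in fact made explicit the point the paper leaves implicit, namely why the hypothesis $\prj\La\subseteq\CX$ is what allows the presentation to restrict to $\CX$.
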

\begin{proof}
The same argument as in \cite[ Lemma 6.1]{Au1} works here.
\end{proof}
\begin{proposition}\label{stableCOVAR}
Assume  $\CX$ is a covariantly finite  subcategory of $\mmod \Lambda $  and $ \rm{proj} \mbox{-} \Lambda \subseteq \CX $. Then  $\CX \mbox{-}^0 \rm{mod}\simeq \underline{\CX} \mbox{-} \rm{mod}.$ In particular, $\underline{\CX} \mbox{-} \rm{mod}$ is an abelian category with enough projectives.
\end{proposition}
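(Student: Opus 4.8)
The plan is to imitate, on the covariant side, the argument behind Lemma \ref{stabel=mmodd}. The canonical functor $\pi\colon \CX \rt \underline{\CX}$ induces by precomposition an exact functor $\mathfrak{G}\colon \underline{\CX}\mbox{-}\FMod \rt \CX\mbox{-}\FMod$, $F \mapsto F\circ \pi$, which is the left-module analogue of the functor $\mathfrak{F}$. First I would check that $\mathfrak{G}$ is fully faithful with essential image exactly $\CX\mbox{-}^0\FMod$, the covariant functors vanishing on $\Prj\CA$. Indeed every projective $P$ becomes a zero object in $\underline{\CX}$ (its identity factors through $P$ itself), so any functor of the form $F\circ\pi$ vanishes on projectives; conversely, if $G$ vanishes on projective objects then $G$ kills every morphism factoring through a projective, hence factors uniquely through the quotient functor $\pi$. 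Thus $\underline{\CX}\mbox{-}\FMod \simeq \CX\mbox{-}^0\FMod$, and it remains to see that this equivalence restricts to finitely presented objects.

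For the forward inclusion I would compute $\mathfrak{G}$ on a representable. Since $\rm{add}\,\La=\prj\La$ is functorially finite in $\mmod\La$, every $X\in\CX$ admits a left $\prj\La$-approximation $u\colon X \rt P^0$; a morphism out of $X$ factors through a projective precisely when it factors through $u$, so $\CP(X,-)\vert_{\CX}=\im(\CX(P^0,-)\rt\CX(X,-))$ is finitely generated. Hence there is a presentation
\[\CX(P^0,-)\lrt \CX(X,-)\lrt \mathfrak{G}(\underline{\CX}(X,-))\lrt 0,\]
showing that the ``reduced representables'' $\mathfrak{G}(\underline{\CX}(X,-))$ lie in $\CX\mbox{-}^0\rm{mod}$; by right exactness $\mathfrak{G}$ then carries all of $\underline{\CX}\mbox{-}\rm{mod}$ into $\CX\mbox{-}^0\rm{mod}$. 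For the converse, take $G\in\CX\mbox{-}^0\rm{mod}$ with a finite presentation $\CX(X',-)\rt\CX(X,-)\rt G\rt 0$. Because $G$ vanishes on projectives, the structure map $\CX(X,-)\rt G$ annihilates $\CP(X,-)\vert_{\CX}$ and so factors through the epimorphism $\CX(X,-)\twoheadrightarrow\mathfrak{G}(\underline{\CX}(X,-))$; doing the same for $X'$ and using that $\mathfrak{G}$ is exact and fully faithful, these assemble, after transporting along the equivalence of the previous paragraph, into a presentation $\underline{\CX}(X',-)\rt\underline{\CX}(X,-)\rt F\rt 0$ with $\mathfrak{G}(F)\simeq G$. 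Thus $F\in\underline{\CX}\mbox{-}\rm{mod}$ and the equivalence restricts as claimed.

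Finally, for the ``in particular'' I would use that $\underline{\CX}\mbox{-}\rm{mod}=\mmod(\underline{\CX}^{\op})$ is abelian with enough projectives as soon as $\underline{\CX}$ admits weak cokernels, the representables furnishing the projective generators. To produce weak cokernels in $\underline{\CX}$, given $\underline{f}\colon X\rt Y$ I would choose a left $\prj\La$-approximation $u\colon X\rt P$ and a weak cokernel $w\colon Y\oplus P\rt Z$ in $\CX$ of $\binom{f}{u}$, which exists because covariant finiteness of $\CX$ makes $\CX$ admit weak cokernels (take the cokernel in $\CA$ followed by a left $\CX$-approximation). A direct check shows $g:=w\vert_{Y}\colon Y\rt Z$ becomes a weak cokernel of $\underline{f}$ in $\underline{\CX}$: any $\underline{h}$ with $\underline{h}\,\underline{f}=0$ has $hf$ factoring through a projective, hence through $u$, and adjusting $h$ by a suitable morphism on the summand $P$ yields a morphism annihilating $\binom{f}{u}$, which therefore factors through $w$; restricting to $Y$ exhibits the required factorisation of $\underline{h}$ through $\underline{g}$.

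The main obstacle is the finitely presented bookkeeping of the middle paragraph: everything hinges on controlling the subfunctor $\CP(X,-)\vert_{\CX}$ of morphisms through projectives, which is exactly what the left $\prj\La$-approximation supplies, and on descending presentations along $\pi$ using the vanishing-on-projectives property. The weak-cokernel construction of the last paragraph is the technical heart of the ``abelian with enough projectives'' clause, and it is the one place where the covariant finiteness hypothesis on $\CX$ is genuinely needed.
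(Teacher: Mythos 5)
Your proposal is correct, and it reaches the conclusion by a route that differs from the paper's at the one genuinely technical step. The paper also restricts the big equivalence $\pi^{*}\colon \underline{\CX}\mbox{-}\FMod \simeq \CX\mbox{-}^0\FMod$ to finitely presented objects, and its denseness argument is the same commutative-diagram descent of a presentation that you use; but to show that the reduced representable $(\underline{X},-)$ lands in $\CX\mbox{-}^{0}\rm{mod}$ the paper invokes the Auslander--Reiten isomorphism $\underline{\Hom}_{\La}(X,-)\simeq \Tor_1(-,\rm{Tr}(X))$, embeds $(\underline{X},-)$ into the four-term exact sequence of tensor functors $-\otimes_{\La}\Omega(\rm{Tr}(X))\vert_{\CX}\rt -\otimes_{\La}Q\vert_{\CX}\rt -\otimes_{\La}\rm{Tr}(X)\vert_{\CX}$, and then uses Lemma \ref{AUSlemma} together with the abelianness of $\CX\mbox{-}\rm{mod}$ (hence covariant finiteness of $\CX$) to conclude finite presentability. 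You instead observe that a left $\prj\La$-approximation $u\colon X\rt P^0$ (which exists for any artin algebra) identifies $\CP(X,-)\vert_{\CX}$ with the image of $\CX(P^0,-)\rt\CX(X,-)$, producing the two-term presentation $\CX(P^0,-)\rt\CX(X,-)\rt(\underline{X},-)\rt 0$ outright. Your argument is more elementary, avoids the transpose and tensor-functor machinery entirely, and does not even use covariant finiteness of $\CX$ for this particular step (only for the weak-cokernel construction at the end); the paper's version has the advantage of exhibiting $(\underline{X},-)$ inside the tensor-functor resolution that is then reused in Construction \ref{tesorcon} and Theorem \ref{Thirdequivalence}. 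Finally, you make explicit the weak-cokernel construction in $\underline{\CX}$ underlying the ``abelian with enough projectives'' clause, which the paper leaves implicit; your construction (replace $f$ by $\binom{f}{u}$ into $Y\oplus P$ and take a weak cokernel in $\CX$) is the standard one and checks out.
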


\begin{proof}
The canonical functor $\pi:\CX \rt \underline{\CX}$   induces an equivalence $\pi^{*}$ between $\underline{\CX} \mbox{-} \rm{Mod}$ and $\CX \mbox{-}^0 \rm{Mod}$, the subcategory of $\CX \mbox{-} \rm{Mod}$ consisting  those of  functors which  vanish on projective modules. To find an equivalence between $\CX \mbox{-}^0 \rm{mod}$ and $\underline{\CX} \mbox{-} \rm{mod}$, we show that the restriction of $\pi^{*}$ on $\underline{\CX} \mbox{-} \rm{mod}$  will do it.

First to prove $\pi^{*} (F ) \in  \CX \mbox{-}^0 \rm{mod}$ for each $F $ in $\underline{\CX} \mbox{-} \rm{mod}$. Since $\pi^{*}$ is an exact functor,  it is enough to show  it in the  the case that $F=(\underline{X}, -)$ for some $X$ in $\CX.$ Note that $\pi^{*}((\underline{X}, -))$ is the same as $(\underline{X}, -)$ only considering it as a functor on $\CX$, that is,  for each $Y \in \CX, (\underline{X}, -)(Y)$ coincides with   $\Hom_{\La}(X, Y)$  modulo the morphisms factoring through a projective module. In view of the isomorphism $\underline{\Hom}_{\La}(M, -)\simeq \Tor_1(-, \rm{Tr}(M))$ for each $M \in \mmod \La,$ \cite[Proposition 2.2]{AR3}, we have
\[ 0 \rt (\underline{X}, -) \rt -\otimes_{\La}\Omega(\rm{Tr}(X))\vert_{\CX} \rt -\otimes_{\La}Q \vert_{\CX}\rt -\otimes_{\La}\rm{Tr}(X) \vert_{\CX}\rt 0\]
where $Q$ is in $\prj \La^{\op}.$ Since $\CX$ is covariantly finite  so $\CX \mbox{-} \rm{mod}$ is an abelian category, now by the above exact sequence and Lemma \ref{AUSlemma} we get our desired result.

To prove $\pi^*$ is a dense functor: Take $F \in \CX \mbox{-}^0 \rm{mod}$ and let $(Y, -) \rt (X, -) \rt F \rt 0$ be a projective presentation of $F$. Since $F$ vanishes on projective modules then it can be fitted in the following commutative diagram

\[\xymatrix{(Y, -) \ar[d] \ar[r] & (X, -)  \ar[r] \ar[d] & F \ar[r] \ar@{=}[d] & 0 \\
(\underline{Y}, -)  \ar[r] & (\underline{X}, -) \ar[r] & F \ar[r] & 0.}\]

Therefore, the lower row in the  above diagram says us we can consider $F$ as a functor on $\underline{\CX}$ which lies in $\underline{\CX} \mbox{-} \rm{mod}$. Further, it is mapped to $F$ by $\pi^*$ as a functor on $\CX$. 
\end{proof}

As an interesting consequence of the above result,  we in the following show that the property of  being dualizing $k$-variety of a functorially finite subcategory  can be transferred to  its stable category.

\begin{theorem}\label{StableDualizing}
Assume  $\CX$ is a functorially finite   subcategory of $\mmod \Lambda $ such that $ \rm{proj} \mbox{-} \Lambda \subseteq \CX $. Then the stable category $\underline{\CX}$ is also a dualizing $k$-variety.
\end{theorem}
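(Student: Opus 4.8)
The strategy is to import the duality of the ambient variety $\CX$ through the two ``stable'' equivalences already established and to check that the $k$-dual respects the subcategories of functors that vanish on projectives. Since $\CX$ is functorially finite in $\mmod\La$, it is a dualizing $k$-variety by \cite[Theorem 2.3]{AS}, so $D=\Hom_k(-,E)$ yields a duality $\mmod\CX\rt\CX\mbox{-}\rm{mod}$. Being functorially finite, $\CX$ is both contravariantly and covariantly finite; hence Lemma \ref{stabel=mmodd} gives an equivalence $\mathfrak{F}:\mmod\underline{\CX}\st{\simeq}\rt\mmodd\CX$ onto the finitely presented right $\CX$-modules vanishing on projectives, and Proposition \ref{stableCOVAR} gives an equivalence $\pi^{*}:\underline{\CX}\mbox{-}\rm{mod}\st{\simeq}\rt\CX\mbox{-}^0\rm{mod}$ onto the finitely presented left $\CX$-modules vanishing on projectives. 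In particular both $\mmod\underline{\CX}$ and $\underline{\CX}\mbox{-}\rm{mod}$ are abelian, and $\underline{\CX}$ is a $k$-linear essentially small additive category (the quotient of $\CX$ by the ideal of maps factoring through projectives, with the same objects), so it is legitimate to ask whether it is a dualizing variety.

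Next I would note that $D$ is computed objectwise, $(DF)(X)=\Hom_k(F(X),E)$, so it sends a functor vanishing on projectives to a functor vanishing on projectives: if $F(P)=0$ for every projective object $P$, then $(DF)(P)=\Hom_k(0,E)=0$. Consequently the duality $D$ of the dualizing variety $\CX$ restricts to a duality $\mmodd\CX\st{\simeq}\rt\CX\mbox{-}^0\rm{mod}$; finite presentation of the image is inherited from the fact that $\CX$ is dualizing, and the same argument applied to $\CX^{\op}$ supplies the quasi-inverse. It then remains only to identify this restricted duality, transported along $\mathfrak{F}$ and $\pi^{*}$, with the intrinsic $k$-dual on $\underline{\CX}$-modules.

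For this last point I would verify that the square
\[\xymatrix{\mmod\underline{\CX}\ar[r]^{D}\ar[d]_{\mathfrak{F}} & \underline{\CX}\mbox{-}\rm{mod}\ar[d]^{\pi^{*}}\\ \mmodd\CX\ar[r]^{D} & \CX\mbox{-}^0\rm{mod}}\]
commutes. Since $\pi:\CX\rt\underline{\CX}$ is the identity on objects and both $\mathfrak{F}$ and $\pi^{*}$ are restriction (inflation) along $\pi$, one computes $D(\mathfrak{F}F)(X)=\Hom_k\big(F(\pi X),E\big)=(DF)(\pi X)=\pi^{*}(DF)(X)$ for every $X\in\CX$, so the square commutes on the nose. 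Transporting the duality of the bottom row across the vertical equivalences then shows that $D:\mmod\underline{\CX}\rt\underline{\CX}\mbox{-}\rm{mod}$ is itself a duality, i.e. $\underline{\CX}$ is a dualizing $k$-variety. The one point requiring care---and the likely main obstacle---is precisely this identification: one must be sure that the anti-equivalence obtained by transport is the \emph{intrinsic} functor $\Hom_k(-,E)$ on $\underline{\CX}$-modules rather than some other duality, and this is exactly what the object-preserving nature of $\pi$ guarantees.
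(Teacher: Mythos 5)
Your proposal is correct and follows essentially the same route as the paper: both start from the fact that $\CX$ is a dualizing $k$-variety by \cite[Theorem 2.3]{AS}, observe that the objectwise $k$-dual $D$ preserves the property of vanishing on projectives and hence restricts to a duality between $\mmodd\CX$ and $\CX\mbox{-}^0\rm{mod}$, and then transport this along the equivalences of Lemma \ref{stabel=mmodd} and Proposition \ref{stableCOVAR} via the same commutative diagram. Your write-up is if anything slightly more explicit than the paper's about why the restricted duality lands in the right subcategory and why the transported functor agrees with the intrinsic $\Hom_k(-,E)$ on $\underline{\CX}$-modules.
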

\begin{proof}
Note that as mentioned in  \ref{DualityVaraity1}, $\CX$ is a dualizing $k$-variety.  Consider the following commutative diagram

\[ \xymatrix{  \CX \mbox{-} \rm{mod} \ar[rr]^{D} && \mmod \CX  \\
\CX \mbox{-}^0 \rm{mod} \ar@{^(->}[u] \ar[rr]^{D} && \rm{mod}_0 \mbox{-} \CX  \ar@{^(->}[u]\\
\underline{\CX} \mbox{-} \rm{mod} \ar[rr]^{D} \ar[u]^{\pi^{*}} && \rm{mod}\mbox{-} \underline{\CX} \ar[u]^{\pi^{**}} }\]
where $\pi^*$ and $\pi^{**}$ are the induced functors by the canonical functor $\pi:\CX \rt \underline{\CX}$, see Lemma \ref{stabel=mmodd} and  Proposition \ref{stableCOVAR}. Also. all  functors in horizontal rows are constructed just by sending functor $F$ in the corresponding domain to $DF$, that is for each object $X \in \CX,$  $DF(X)=\Hom_k(F(X), E)$, to be easy we use the symbol $D$ for all such functors. The functor $D$ in the middle is an equivalence since it is restriction of the functor $D$ in the  top, which is a duality  since $\CX$ is a dualizing $k$-variety. On the other hand, due to  Lemma \ref{stabel=mmodd} and Proposition \ref{stableCOVAR}, both functors $\pi^*$ and $\pi^{**}$ are equivalence. Now the commutativity diagram in  the above implies that the lowest contravariant functor $D$ is a duality. Hence,  $\underline{\CX}$ is a dualizing $k$-variety.
\end{proof}

In particular,  a special example of the above result is that  $\underline{\rm{mod }} \mbox{-}\La$ itself is a dualizing $k$-variety. It also can be seen in \cite{AR1}. َAnother immediate  consequence of the corollary is that $\mmod \underline{\CX}$ and $\underline{\CX} \mbox{-} \rm{mod}$ have enough injectives. With additional condition on $\CX$, that is, having split idempotents, we can conclude that  the  injective functors in $\mmod \underline{\CX}$ and $\underline{\CX} \mbox{-} \rm{mod}$ are those of $D(\underline{X}, -)$ and $D(-, \underline{X})$ for some $X \in \CX$, respectively.

In the following we  define  two other functors that are  basic parts of our next theorem.

\begin{construction}\label{tesorcon}
 For  a subcategory $\CX$ of $\mmod \Lambda$, $D(\CX)$ stands for the subcategory of  all left modules $D(X)$ for some $X \in \CX$. \\
 We define a functor $\Psi'_1: S_{D(\CX)}(\mmod \Lambda^{\rm{op}}) \rt \underline{\CX} \mbox{-} \rm{mod}$ respect to the subcategory $\CX$ as follows. Let $A \st{f} \rt B$ be an object  in $ S_{D(\CX)}(\mmod \Lambda^{\rm{op}}) .$ Consider the following  exact sequence
 $$0 \rt F \rt -\otimes_{\La}A \vert_{\CX} \st{-\otimes f}\lrt -\otimes_{\La} B \vert_{\CX}\rt -\otimes_{\La} \rm{Coker} (f) \vert_{\CX}\rt 0.$$
 Define $\Psi'_1(A \st{f}\rt B):= F$. Note that by Lemma \ref{stableCOVAR} we can think $F$ as an object in $\underline{\CX} \mbox{-} \rm{mod}.$ Similar to Construction \ref{FirstCoonstr},  $\Psi'_1$ can also be defined on the morphisms. Note that this turn  we are dealing with injective resolution in $\CX \mbox{-} \rm{mod}$. In a similar way, we can define functor $\Psi'_1:F_{D(\CX)}(\mmod \La^{\op}) \rt \underline{\CX} \mbox{-} \rm{mod} $, which sends an object $C \st{g} \rt D$ to the kernel of $-\otimes_{\La} \ker(g)\vert{\CX}  \rt -\otimes_{\La} C\vert_{\CX}$ in $\underline{\CX} \mbox{-} \rm{mod}$, or choosing a functor $G$ such that satisfying
$$0 \rt G \rt -\otimes_{\La} \ker(g) \vert_{\CX} \rt -\otimes_{\La} C \vert_{\CX} \st{-\otimes g} \lrt -\otimes_{\La} D \vert_{\CX}\rt 0.$$
\end{construction}

By use of the above construction we are able to prove new  equivalences.

\begin{theorem}\label{Thirdequivalence}
Let $\CX$ be a  covariantly finite subcategory of $\mmod \Lambda $ such that $ \rm{proj} \mbox{-} \Lambda \subseteq \CX $ and being
a dualizing  $k$-variety. Then

\begin{itemize}
\item[$(1)$]
 If $\CX$ is  closed under kernel of epimorphisms.
  Then the functor $\Psi_1$, defined in  construction \ref*{tesorcon},  respect to subcategory $\CX$, makes an equivalence of categories
$$ S_{D(\CX)}(\mmod \La^{\op})/ \mathcal{V}_1 \simeq \underline{\CX} \mbox{-} \rm{mod},$$
 where  $\mathcal{V}_1=\rm{add}\{(X \st{\rm{Id}} \rt X)\oplus (0 \rt X) \mid \text{for all} \  X \in \CX \}.$
  \item[$(2)$]
  If $\CX$ is  closed under cokernel and kernel  of monomorphisms and epimorphisms, respectively.
  Then the functor $\Psi'_1$, defined in  above construction, respect to the subcategory $\CX$, makes an equivalence of categories
$$ F_{D(\CX)}(\mmod \La^{\op})/ \mathcal{V'}_1 \simeq \underline{\CX} \mbox{-} \rm{mod},$$
 where  $\mathcal{V'}_1=\rm{add}\{(X \st{\rm{Id}} \rt X)\oplus (X \rt 0) \mid \text{for all} \  X \in \CX \}.$

 \end{itemize}

\end{theorem}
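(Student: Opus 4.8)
The plan is to prove $(1)$ by the same scheme as Theorem \ref{Thefirst}: I will show that $\Psi_1$ is dense, full and objective, and then quote the criterion of \cite[Appendix]{RZ}. The only change is that the whole argument is run in the covariant ``tensor'' picture rather than the contravariant ``Hom'' one: the part played in Theorem \ref{Thefirst} by the representable projectives $(-,X)$ of $\mmod\CX$ and by projective resolutions is now played by the injective objects of $\CX\mbox{-}\rm{mod}$ and by injective coresolutions. Two earlier results make this possible. By Proposition \ref{InjStable} the injectives of $\CX\mbox{-}\rm{mod}$ are precisely the functors $-\otimes_{\La}A\vert_{\CX}$ with $A\in D(\CX)$, and the tensor embedding $A\mapsto -\otimes_{\La}A\vert_{\CX}$ is full and faithful on $D(\CX)$ (it is $D$ followed by Yoneda, where $D\colon\mmod\CX\rt\CX\mbox{-}\rm{mod}$ is the duality of the dualizing variety $\CX$ carrying $(-,X)$ to $-\otimes_{\La}D(X)\vert_{\CX}$); by Proposition \ref{stableCOVAR} the target $\underline{\CX}\mbox{-}\rm{mod}$ is the subcategory $\CX\mbox{-}^0\rm{mod}$ of functors vanishing on projectives. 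Note first that $\Psi_1(A\st f\rt B)=\ker(-\otimes f)$ really lands in $\CX\mbox{-}^0\rm{mod}$, since a projective (hence flat) module keeps the monomorphism $f$ injective after tensoring.

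For density, given $F\in\CX\mbox{-}^0\rm{mod}$ I choose an injective copresentation $0\rt F\rt -\otimes_{\La}A\vert_{\CX}\st{-\otimes f}\rt -\otimes_{\La}B\vert_{\CX}$ with $A,B\in D(\CX)$; by faithfulness the map is $-\otimes f$ for some $f\colon A\rt B$, and evaluating at the projective $\La\in\CX$ gives $\ker f=F(\La)=0$, so $f$ is monic. This is the point where the hypothesis is used: since $\CX$ is closed under kernels of epimorphisms, the dual category $D(\CX)$ is closed under cokernels of monomorphisms, so $\Coker(f)\in D(\CX)$; right exactness of $-\otimes_{\La}(-)$ then identifies the cokernel of $-\otimes f$ with the injective $-\otimes_{\La}\Coker(f)\vert_{\CX}$, so that $(A\st f\rt B)\in S_{D(\CX)}(\mmod\La^{\op})$ and $\Psi_1(A\st f\rt B)\simeq F$. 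Fullness is the usual lifting: a morphism $F\rt F'$ lifts to a chain map of the two injective coresolutions, which by faithfulness descends to a morphism in $S_{D(\CX)}(\mmod\La^{\op})$ over it. For the objects killed by $\Psi_1$, the vanishing $\ker(-\otimes f)=0$ makes $0\rt -\otimes_{\La}A\vert_{\CX}\rt -\otimes_{\La}B\vert_{\CX}\rt -\otimes_{\La}\Coker(f)\vert_{\CX}\rt 0$ a short exact sequence of injectives; it splits, and lifting the splitting through the tensor embedding splits $0\rt A\st f\rt B\rt\Coker(f)\rt 0$, whence $(A\st f\rt B)\cong (A\st{\rm{Id}}\rt A)\oplus(0\rt\Coker(f))\in\mathcal{V}_1$.

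The real obstacle is objectivity: that $\Psi_1(\sigma)=0$ forces $\sigma$ to factor through $\mathcal{V}_1$. This is the precise analogue of the null-homotopy computation made in the last diagram of the proof of Theorem \ref{Thefirst}, now carried out in $\mathbb{C}^{\rm{b}}(\CX\mbox{-}\rm{mod})$ with injective coresolutions. For $\sigma\colon (A\st f\rt B)\rt (A'\st{f'}\rt B')$ the induced chain map between the two three-term injective coresolutions lifts the zero map $\Psi_1(A\st f\rt B)\rt\Psi_1(A'\st{f'}\rt B')$, hence is null-homotopic; the contracting homotopy is built from morphisms between functors $-\otimes_{\La}(-)\vert_{\CX}$, which by full faithfulness come from module maps in $D(\CX)$, and reassembling them as in Theorem \ref{Thefirst} exhibits $\sigma$ as factoring through $(A'\st{\rm{Id}}\rt A')\oplus(0\rt B')\in\mathcal{V}_1$. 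Once $\Psi_1$ is full, dense and objective, \cite[Appendix]{RZ} delivers the equivalence $S_{D(\CX)}(\mmod\La^{\op})/\mathcal{V}_1\simeq\underline{\CX}\mbox{-}\rm{mod}$.

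Finally I would deduce $(2)$ from $(1)$ rather than redo it. The kernel and cokernel functors restrict to inverse equivalences $\rm{Cok}\colon S_{D(\CX)}(\mmod\La^{\op})\rightleftarrows F_{D(\CX)}(\mmod\La^{\op})\colon\Ker$ exchanging $\mathcal{V}_1$ and $\mathcal{V'}_1$ (indeed $\rm{Cok}(A\st{\rm{Id}}\rt A)=(A\rt 0)$ and $\rm{Cok}(0\rt A)=(A\st{\rm{Id}}\rt A)$), hence induce an equivalence $F_{D(\CX)}(\mmod\La^{\op})/\mathcal{V'}_1\simeq S_{D(\CX)}(\mmod\La^{\op})/\mathcal{V}_1$. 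Comparing the defining sequences of Construction \ref{tesorcon} shows $\Psi'_1\simeq\Psi_1\circ\Ker$: for $(C\st g\rt D)$ with associated object $(\ker g\hookrightarrow C)$, both functors are the kernel of $-\otimes_{\La}\ker(g)\vert_{\CX}\rt -\otimes_{\La}C\vert_{\CX}$. Composing with $(1)$ gives $F_{D(\CX)}(\mmod\La^{\op})/\mathcal{V'}_1\simeq\underline{\CX}\mbox{-}\rm{mod}$. (The closure of $\CX$ under kernels of epimorphisms assumed in $(2)$ already supplies, via duality, the closure of $D(\CX)$ under cokernels of monomorphisms needed to invoke $(1)$; the additional closure under cokernels of monomorphisms is what one would use to argue $(2)$ directly, symmetrically to $(1)$.)
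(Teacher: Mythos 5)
Your proposal is correct and follows essentially the same route as the paper's proof: density via Proposition \ref{InjStable} and Proposition \ref{stableCOVAR} together with the closure hypothesis to place $\rm{Coker}(f)$ back in $D(\CX)$, a splitting argument for the objects killed by $\Psi_1$, and fullness/objectivity by the injective-coresolution analogue of the null-homotopy computation from Theorem \ref{Thefirst}, all fed into the criterion of \cite[Appendix]{RZ}. The only (harmless) deviation is that you deduce part $(2)$ formally from part $(1)$ via the $\Ker$/$\rm{Cok}$ equivalence exchanging $\mathcal{V}_1$ and $\mathcal{V}'_1$ and the identification $\Psi'_1\simeq\Psi_1\circ\Ker$, whereas the paper runs the dual argument directly and only remarks on which closure hypothesis is needed for density of $\Psi'_1$; both work, and the paper itself uses exactly this composition in Construction \ref{ConstDul}.
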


\begin{proof}
In general the proof of two statements is the same as  the proof of Theorem \ref{Thefirst} or Theorem \ref{Thesecound}. Let us give some points to complete our argument. For $F \in \underline{\CX} \mbox{-} \rm{mod}$ and using the equivalence in Proposition \ref{stableCOVAR} we can consider $F$ as a functor in $\CX \mbox{-} \rm{mod}$ which vanishes on projective modules. In view of Proposition \ref{InjStable} an injective resolution of $F$ is as follows:
$$0 \rt F \rt -\otimes_{\La}D(X) \st{-\otimes_{\La}D(f)} \lrt -\otimes_{\La}D(Y) \rt -\otimes_{\La}D(Z) \rt 0.$$

Note that by assumption on $\CX,$ $D(\CX)$ becomes closed under cokernel of monomorphisms and so $D(Z) \in D(\CX).$ Hence $D(X) \st{D(f)} \rt D(Y)$ becomes an object in $ S_{D(\CX)}(\mmod \La^{\op})$ such that $\Psi_1(D(X) \st{D(f)} \rt D(Y))\simeq F$. It is true for any $F$ in $\underline{\CX} \mbox{-} \rm{mod}$, and so $F$ is dense. About the kernel of $\Psi_1$: Of course, under functor $\Psi_1,$ the objects in $\mathcal{V}_1$ are sent to zero. Conversely, we assume $D(X) \st{D(f)} \rt D(Y)$ is an object in the kernel of $\Psi_1$. So we get short exact sequence $ 0 \rt  -\otimes_{\La}D(X) \st{-\otimes_{\La}D(f)} \lrt -\otimes_{\La}D(Y) \rt -\otimes_{\La}D(Z) \rt 0 $ in $\CX \mbox{-} \rm{mod}$. If we apply the duality $D=\Hom_{k}(-, E)$ on the short exact sequence, then we obtain the short exact sequence $0 \rt (-, Z) \rt (-, Y) \st{(-, f)}\lrt (-, X) \rt 0$ in $\mmod \CX.$ It follows that $f$ is a split epimorphism, or equivalently, $D(f)$ is a split monomorphism. So one can consider $D(f)$ as an object in $\mathcal{V}_1$. To obtain our equivalence we need to prove $\Psi_1$ is also full and objective, but it can be done in the same way of Theorem \ref{Thefirst}, the difference here is we are dealing with injective resolution instead of projective resolution in there. We finish the proof by pointing out that the additional assumption of being closed under kernel of epimorphisms in  dual part $(2)$ is necessary to prove that $\Psi'_1$ is dense.

\end{proof}
 The difference between  the  equivalences in  Theorem \ref{Thefirst} and \ref{Thesecound} with  the  equivalences in  the above theorem is that in the latter left and right modules simultaneously are involved.
\begin{construction}\label{ConstDul}
Suppose that $\CX$ is a functorially finite subcategory of $\mmod \La.$ {\bf First:} If $\CX$ conatians $\rm{prj} \mbox{-} \La$ and  $\rm{inj} \mbox{-} \La$ and is  closed under cokernel and kernel  of monomorphisms and epimorphisms, respectively,  then by Theorem \ref{Thefirst} and Theorem \ref{Thirdequivalence}, we have the following equivalence 
\[\Theta:\mmod \underline{D(\CX)} \st{\Psi}\rt S_{D(\CX)}(\mmod \La^{\op})/ \mathcal{V} \st{\rm{Cok}} \rt F_{D(\CX)}(\mmod \La^{\op})/ \mathcal{V'}_1 \st{\Psi_1'} \rt \underline{\CX} \mbox{-} \rm{mod}. \]
Set $$\rm{Tr}_{\CX}:\underline{D(\CX)} \st{Y}\rt \rm{Prj} \mbox{- }\mmod \underline{D(\CX)}  \st {\Theta\mid}\rt  \rm{Prj} \mbox{-}\underline{\CX}\mbox{-} \rm{mod} \st{Y'} \rt \underline{\CX},$$ where $Y$ and $Y'$ are functors coming from the  Yoneda Lemma. Two functors $Y$ and $\Theta \mid$ are equivalence and the last functor $Y'$ is a duality. So  $\rm{Tr}_{\CX}$ is a duality.

{\bf Second:} Let $\rm{prj} \mbox{-} \La \subseteq \CX$ be closed under kernel of epimorphisems. Then again by Theorem \ref{Thirdequivalence} and Theorem \ref{Thesecound}, we obtain  the following duality
\[ \Theta': \underline{\CX} \mbox{-} \rm{mod} \st{\Psi_1^{-1}}\rt S_{D(\CX)}(\mmod \La^{\op})/ \mathcal{V}_1 \st{\rm{Cok}} \rt F_{D(\CX)}(\mmod \La^{\op})/ \mathcal{V'} \st{\Psi'}\rt \overline{D(\CX)} \mbox{-} \rm{mod}.\]
\end{construction}

 Let  $M \in \mmod \La^{\rm{op}}$ and choose  $0 \rt M \st{i} \rt I \rt \Omega^{-1}(M) \rt 0$, where $I$ is the injective envelope of $M$, the functor $\mathcal{N}_{M}:\underline{\CX} \mbox{-} \rm{mod} \rt \mathcal{A}b$ for subcategory $\CX$ contating $\rm{prj} \mbox{-}\La$ and covariantly finite is defined 
 $$\mathcal{N}_M: \rm{Ker}((-\otimes_{\La}M ) \st{-\otimes_{\La}i} \lrt (-\otimes_{\La} I)), $$
by taking kernel in $\CX \mbox{-} \rm{mod}.$ We also can define similar notations for a $M \in \mmod \La$ and $\CX \subseteq \mmod \La^{\rm{op}}.$

In the following we will list some properties of the functors are defined in Construction \ref{ConstDul} which  can be verified most of them in a straightforward way.  So we leave the proof to the reader.
\begin{proposition}\label{Vartens}
For each $X \in \CX$, we have the following properties:
\begin{itemize}
\item[(i)] Let  $\CX$ be the same as the first part of Construction \ref{ConstDul}. Then
\begin{itemize} 
\item[(1)]$\Theta((-, \underline{D(X)}))=\Tor_1(-, D(X)) \mid_{\CX}\simeq (\underline{\rm{Tr}_{\CX}(D(X))}, -);$
\item[(2)] $\Theta(\Ext^1(-, D(X))\mid_{D(\CX)})=\mathcal{N}_{D(X)};$
\item[(3)] There exists a morphism $s:\rm{Tr}D(X) \rt \rm{Tr}_{\CX}(D(X))$ so that for any morphism $g:\rm{Tr}D(X) \rt Y$ in $\underline{H(\La)}$ with $Y $ in $\CX$, there is a unique $h:\rm{Tr}_{\CX}(D(X)) \rt Y$ such that $h \circ s=g$ in $\underline{H(\La)}$. In particular, $s$ is isomorphic to the   minimal $\underline{\CX}$-left approximation of $\rm{Tr}(D(X))$ in $\underline{H(\La)}.$

\end{itemize}
\item[(ii)] Let $\CX$ be the same as the second part of Construction \ref{ConstDul}. Then
\begin{itemize}
\item[(1)] $(\Theta')^{-1}((\overline{D(X)}, -))=\mathcal{N}_{D(X)}.$
\item[(2)] $(\Theta')^{-1}(\Ext^1(D(X),-)\mid_{D(\CX)})=\Tor_1(-, D(X))\mid_{\CX}.$
\end{itemize}
\item[(iii)] Assume $\CX$ holds  either the conditions of the first  or the second  part of construction \ref{ConstDul}, and moreover closed under direct summand.  If $F$ is an injective functor in $\underline{\CX} \mbox{-} \rm{mod},$ then $F \simeq \mathcal{N}_{D(X)}$ for some $X \in \CX.$
\end{itemize}
\end{proposition}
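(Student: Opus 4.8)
<br />

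The plan is to prove the three parts of Proposition~\ref{Vartens} by systematically unwinding the definitions of the composite functors $\Theta$, $\Theta'$, and $\rm{Tr}_{\CX}$ given in Construction~\ref{ConstDul}, and tracking the image of a representable functor through each stage of the composition. The essential input is the same bookkeeping already used in the proof of Theorem~\ref{AusConj}: for $X \in \CX$ one has the standard four-term projective presentation of the stable-hom functor and, dually, of the $\Ext^1$-functor, coming from a short exact sequence $0 \rt \Omega(X) \rt P \rt X \rt 0$. First I would record, for the object $D(X) \in D(\CX)$, the analogous presentations in $\mmod \underline{D(\CX)}$ and establish the \emph{key isomorphism} $\underline{\Hom}_{\La}(M,-) \simeq \Tor_1(-,\rm{Tr}(M))$ (cited from \cite{AR3} in the proof of Proposition~\ref{stableCOVAR}) to convert stable-hom data into the tensor functors that define $\Psi_1$ and $\Psi'_1$.

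For part~(i)(1), I would feed the representable $(-,\underline{D(X)})$ through $\Psi$, obtaining the monomorphism $(\Omega(D(X)) \rt P)$ as in the computation in Theorem~\ref{AusConj}; applying $\rm{Cok}$ yields $(P \rt D(X))$, and then $\Psi_1'$ produces the kernel functor, which by the $\Tor_1 \simeq \underline{\Hom}$ identity is exactly $\Tor_1(-,D(X))\mid_{\CX}$. The second identification, with $(\underline{\rm{Tr}_{\CX}(D(X))},-)$, follows because $\rm{Tr}_{\CX}$ is \emph{defined} precisely so that $\Theta$ carries representables to representables via the Yoneda functors $Y$, $Y'$; that is, this is essentially the definition of $\rm{Tr}_{\CX}$ read backwards. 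For part~(i)(2) I would similarly push $\Ext^1(-,D(X))\mid_{D(\CX)}$ through $\Theta$, using that this $\Ext$-functor is the cokernel term of the dual presentation, and match the output against the definition of $\mathcal{N}_{D(X)}$ as a kernel in $\CX\mbox{-}\rm{mod}$ of $-\otimes_{\La}i$; the point is that the injective-envelope sequence $0 \rt D(X) \rt I \rt \Omega^{-1}(D(X)) \rt 0$ is the one governing $\mathcal{N}_{D(X)}$, and $\Psi'_1$ translates it into exactly that kernel.

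Part~(ii) is formally dual: I would run the same computations through the inverse composite $\Theta'$, so that (ii)(1) and (ii)(2) mirror (i)(2) and (i)(1) respectively, with the roles of $\Tor_1$ and $\mathcal{N}$ interchanged by duality. Part~(iii) is the genuinely conceptual step rather than a calculation: an injective functor $F$ in $\underline{\CX}\mbox{-}\rm{mod}$ corresponds, under the duality furnished by $\Theta$ (or $\Theta'$) together with Theorem~\ref{StableDualizing}, to a \emph{projective} object in the dual category $\mmod\underline{D(\CX)}$; since $\CX$ is closed under direct summands, $\underline{D(\CX)}$ has split idempotents, so by the remark after Lemma on Yoneda every projective is representable, i.e. of the form $(-,\underline{D(X)})$ for some $X\in\CX$. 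Transporting this representable back through $\Theta$ and invoking part~(i)(2) identifies $F$ with $\mathcal{N}_{D(X)}$.

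\textbf{The main obstacle} I anticipate is part~(iii), and specifically the verification that the duality $\Theta$ does indeed interchange injectives with representable projectives in a way compatible with the split-idempotent hypothesis. The computational parts (i)--(ii) are \emph{bookkeeping}: once the presentations are written down, each functor in the composite acts in a prescribed way and the identities fall out, which is exactly why the statement says ``most of them in a straightforward way.'' The care needed in (iii) is to correctly invoke Proposition~\ref{InjStable}, which classifies injectives in $\CX\mbox{-}\rm{mod}$ as the functors $-\otimes D(X)\mid_{\CX}$, and to check that this classification restricts compatibly to the subcategory $\underline{\CX}\mbox{-}\rm{mod}$ of functors vanishing on projectives, so that the injective $F$ really arises from an object of $\CX$ rather than merely of $\mmod\La$; the closure of $\CX$ under direct summands is precisely what makes this restriction work.
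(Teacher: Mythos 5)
Your treatment of (i)(1), (i)(2) and (ii) matches what the paper intends: the author explicitly leaves these to the reader as ``straightforward'' unwindings of Construction \ref{ConstDul}, and your chain $(-,\underline{D(X)}) \mapsto (\Omega(D(X))\hookrightarrow Q) \mapsto (Q\twoheadrightarrow D(X)) \mapsto \ker(-\otimes\Omega(D(X))\mid_{\CX}\to -\otimes Q\mid_{\CX}) = \Tor_1(-,D(X))\mid_{\CX}$ is exactly the right bookkeeping, with the identification $(\underline{\rm{Tr}_{\CX}(D(X))},-)$ then being the definition of $\rm{Tr}_{\CX}$ read off on projectives. Your sketch of (iii) is also workable, though you should be careful that $\Theta$ itself is an \emph{equivalence}, not a duality; to turn injectives into representable projectives you must first compose with the duality $D$ of the dualizing variety $\underline{\CX}$ (Theorem \ref{StableDualizing}), land on a representable $(-,\underline{Y})$ using split idempotents, and then match $D(-,\underline{Y})$ against $\mathcal{N}_{D(Y)}$ via the presentation $(*)$ in the proof of Proposition \ref{DualityDec} (noting that $D(P)$, for $P$ the projective cover of $Y$, is the injective envelope of $D(Y)$).

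The genuine gap is that you never address (i)(3), which is the one clause the paper does not regard as routine and the only one it actually proves. That statement is not a computation of a functor's value but a corepresentability claim: one must combine the isomorphism $\underline{\Hom}_{\La}(\rm{Tr}D(X),-)\simeq \Tor_1(-,D(X))$ (which you cite, but only use as an auxiliary identity) with your (i)(1) to obtain a natural isomorphism $\upsilon:\underline{\Hom}_{\La}(\rm{Tr}D(X),-)\mid_{\CX}\simeq (\underline{\rm{Tr}_{\CX}(D(X))},-)$, and then run a Yoneda argument: the morphism $s$ is the element of $\underline{\Hom}_{\La}(\rm{Tr}D(X),\rm{Tr}_{\CX}(D(X)))$ corresponding to the identity under the evaluation of $\upsilon$ at $\rm{Tr}_{\CX}(D(X))$, and naturality of $\upsilon$ is precisely the statement that $h\mapsto h\circ s$ is bijective for every $Y\in\CX$, i.e.\ that $s$ is a (minimal, by the uniqueness of the factorization) left $\underline{\CX}$-approximation in the stable category. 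Without this step your proposal proves a strictly weaker statement than the proposition, and it is exactly the step that turns the abstract equivalence $\Theta$ into concrete module-theoretic information about $\rm{Tr}_{\CX}$ (it is what is later used, e.g., in Proposition \ref{TransP}).
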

\begin{proof}

Only for $(i)(3)$ use the isomorphism $\underline{\rm{Hom}}_{\La}(\rm{Tr}D(X), -)\simeq
   \Tor_1(-, D(X))$ and then with help of $(i)(1)$ we have   
 
 \[\upsilon:\underline{\rm{Hom}}_{\La}(\rm{Tr}D(X), -)\mid_{\CX}\simeq  (\underline{\rm{Tr}_{\CX}(D(X))}, -). \] 
 Take $ s $ the image of $1_{\rm{Tr}_{\CX}(D(X))}$ under the evaluation of $\upsilon $ on $\rm{Tr}_{\CX}(D(X)).$
 \end{proof}
\begin{remark}
By Proposition \ref{Vartens}, we can obtain other variations of Auslander's conjecture of subsection \ref{AusConj}. For example, for a subcategory $\CX $ as Proposition \ref{Vartens} (iii), any direct summand of $\Tor_1(-, D(X)) \mid_{\CX}$, for some $X \in \CX$ is again in the $\rm{Tor}$-form, or, similarly, for the functor in the form of $\mathcal{N}_{D(X)}$ for some $X \in \CX.$
\end{remark}

\begin{example}\label{trans}
Suppose $\CX$ is $\mathcal{P}^{<\infty}(\La)$ for a Gorenstein  algebra $\La,$ see Example \ref{Examplefirst}. Clearly, $\mathcal{P}^{<\infty}(\La)$ satisfies the conditions of both parts of Construction \ref{ConstDul}.\\
 We can see that $D(\mathcal{P}^{<\infty}(\La))=\mathcal{P}^{<\infty}(\La^{\rm{op}}).$ Hence by the construction we have dualities $\rm{Tr}_{\mathcal{P}^{<\infty}(\La)}:\underline{\mathcal{P}^{<\infty}(\La^{\rm{op}})} \rightleftarrows \underline{\mathcal{P}^{<\infty}(\La)}:\rm{Tr}^{-1}_{\mathcal{P}^{<\infty}(\La)} $, for simplicity, this pair  of quasi-inverse equivalences are shown by $\rm{Tr}_{\mathcal{P}^{<\infty}}$. Another example we can add is whenever $\CX=\mmod \La$, $\rm{Tr}_{\mmod \La}$ is nothing else than the usual Transpose functor $\rm{Tr}$, defined by Auslander. Let us point out $\rm{Tr}_{\mathcal{P}^{<\infty}}$, in general, is not necasserly to be the restriction of the $\rm{Tr}$ over $\mathcal{P}^{<\infty}(\La)$. It might be interesting to compute $\rm{Tr}_{\mathcal{P}^{<\infty}}$, as we will do it for a specific case in the following.

\end{example}
Let us introduce some notations which  will be useful to state our next results.
\begin{definition}\label{Locallyproje}
An object $P=(P_1 \st{f}\rt P_2)$ in $H(\Lambda)$, or $\rm{rep}(\mathbb{A}_2, \La)$, is called locally projective representation if both $P_1$ and $P_2$ are in $\rm{prj}\mbox{-}\La.$
The  subcategory of $H(\Lambda)$ consisting of all locally projective representations will be denoted by $\mathcal{P}(\La).$ Indeed, by definition $\mathcal{P}(\La)=\mathbb{A}_2(\rm{prj}\mbox{-}\La)$, see example \ref{Examplefirst}. Locally injective representations  can be defined similarly, denoted by $\mathcal{I}(\La)$.
\end{definition}

For   $N \in \mmod \La$, let  $\rm{IMin}(N)$ stand for    the representation $I_0 \st{g} \rt I_1$ so that there exists exact sequence  $ 0 \rt N \st{\pi} \rt I_0 \st{g}   \rt  I_1   $ in which $I_0$ and $I_1$ are  injetive envelope  of $N$ and $\rm{Im}g$, respectively. Obviously,  we have a canonical morphism in $H(\La)$, say $\Theta_N$, from $(N \rt 0 ) $ to $(I_0 \st{g} \rt I_1),$ if we consider them as objects in $H(\La).$ The notation $\rm{PMin}(N)$ can be defined dually by using  of projective cover.

If $\La$ is a self-injective algebra, then $T_2(\La)$ is a $1$-Gorenstein algebra. If we  use the equivalence $\mmod T_2(\La)\simeq H(\La),$ then $\mathcal{P}^{<\infty}(T_2(\La)) \simeq \mathcal{P}(\La)$. Then by Example \ref{trans}, we have duality $\rm{Tr}_{\mathcal{P}(\La)}: \underline{\mathcal{P}(\La^{\rm{op}})} \rt \underline{\mathcal{P}(\La)}.$ In the following we describe this duality on each object. 
\begin{proposition}\label{TransP} Keep the above notations for a self injective algebra $\La.$ Let $P=(P_1 \st{f}\rt P_2)$ be in $ \mathcal{P}(\La^{\rm{op}})$. and set $P_1=Q_1\oplus Q_2$ such that  $f \mid_{Q_1}= 0$ and  the largest summand with this property. Then we have $\rm{Tr}_{\mathcal{P}(\La)}(P)\simeq\rm{IMin}(\rm{Tr}(\rm{Coker}(f)) \oplus Q^*_1 )$ in $\underline{\mathcal{P}(\La)}.$ 

\end{proposition}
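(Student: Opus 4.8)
The plan is to compute $\rm{Tr}_{\CP(\La)}$ by unwinding its definition through the chain of equivalences in Construction \ref{ConstDul}, applied to the subcategory $\CX = \CP(\La)$ sitting inside $H(\La) \simeq \mmod T_2(\La)$. Since $\La$ is self-injective, $T_2(\La)$ is $1$-Gorenstein and $\CP^{<\infty}(T_2(\La)) \simeq \CP(\La)$, so by Example \ref{trans} we indeed have the duality $\rm{Tr}_{\CP(\La)} : \underline{\CP(\La^{\op})} \rt \underline{\CP(\La)}$. Recalling from Construction \ref{ConstDul} that $\rm{Tr}_{\CX}$ is the composite $Y' \circ (\Theta\mid) \circ Y$ where $\Theta$ is the equivalence $\mmod \underline{D(\CX)} \rt \underline{\CX}\mbox{-}\rm{mod}$, the first step is to identify what $\Theta$ does to the representable functor $(-,\underline{D(P)})$. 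By Proposition \ref{Vartens}(i)(1) this is $\Tor_1(-, D(P))\mid_{\CX} \simeq (\underline{\rm{Tr}_{\CX}(D(P))}, -)$, so morally the answer is read off from the transpose of $P$ computed inside $H(\La)$, corrected by a left $\underline{\CX}$-approximation as in Proposition \ref{Vartens}(i)(3).

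First I would fix the decomposition $P_1 = Q_1 \oplus Q_2$ with $f\mid_{Q_1} = 0$ maximal, so that the restricted map $f\mid_{Q_2} : Q_2 \rt P_2$ has no projective summand split off in the source, and write $C = \rm{Coker}(f)$. The key computation is the ordinary transpose $\rm{Tr}(P)$ of $P$ as a $T_2(\La)$-module (equivalently as an object of $H(\La)$): using a minimal projective presentation of $P$ over $T_2(\La)$ and the explicit form of projectives in $H(\La)$ coming from $\CP(\La)$, I expect $\rm{Tr}(P)$ to split, in the stable category $\underline{H(\La)}$, as a contribution $\rm{Tr}(C)$ coming from the cokernel together with a contribution $Q_1^{*}$ coming from the ``constant'' summand $Q_1$ on which $f$ vanishes. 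Here $Q_1^{*} = \Hom_{\La}(Q_1,\La)$ is the usual $\La$-dual, and the appearance of $\rm{Tr}(\rm{Coker}(f))$ is exactly what one expects since the transpose over a triangular matrix algebra interacts with the cokernel functor $\rm{Cok}: S_{\CX} \rt F_{\CX}$ built into $\Theta$.

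The decisive step is then to pass from the plain transpose $\rm{Tr}(D(P))$ to the \emph{relative} transpose $\rm{Tr}_{\CP(\La)}(P)$, and this is where the $\rm{IMin}$ appears. By Proposition \ref{Vartens}(i)(3), $\rm{Tr}_{\CX}(D(P))$ is the minimal $\underline{\CX}$-left approximation of $\rm{Tr}(D(P))$ in $\underline{H(\La)}$; since $\CX = \CP(\La)$ and we are working over the $1$-Gorenstein algebra $T_2(\La)$, the left $\CP(\La)$-approximation of a module $N$ is realized by embedding $N$ into the locally injective representation given by its injective envelope data, i.e. precisely by $\rm{IMin}(N)$ with its canonical map $\Theta_N$. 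Applying this to $N = \rm{Tr}(C)\oplus Q_1^{*}$ yields $\rm{IMin}(\rm{Tr}(\rm{Coker}(f))\oplus Q_1^{*})$, which is the asserted formula. I expect the main obstacle to be the explicit identification of the plain transpose $\rm{Tr}(P)$ in $\underline{H(\La)}$ as $\rm{Tr}(C)\oplus Q_1^{*}$ — controlling the minimal projective presentation over $T_2(\La)$ and verifying that the $Q_1$-summand contributes exactly $Q_1^{*}$ with no cross terms modulo projectives — together with checking that the left $\CP(\La)$-approximation is genuinely computed by $\rm{IMin}$ rather than merely bounded by it; both are, at bottom, careful but routine homological bookkeeping once the general machinery of Construction \ref{ConstDul} and Proposition \ref{Vartens} is in place.
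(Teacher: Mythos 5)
Your proposal follows essentially the same route as the paper's proof: the paper likewise computes the ordinary transpose of $P$ over $T_2(\La)$ from an explicit (non-minimal) projective presentation, identifies the resulting cokernel as $(\rm{Tr}(\rm{Coker}(f))\oplus Q_1^{*} \rt 0)$ using the decomposition of $P$ forced by the maximal summand $Q_1$, and then invokes Proposition \ref{Vartens}(i)(3) to realize $\rm{Tr}_{\mathcal{P}(\La)}(P)$ as the minimal left $\underline{\mathcal{P}(\La)}$-approximation of that object in $\underline{H(\La)}$, verified to be the canonical map to $\rm{IMin}(\rm{Tr}(\rm{Coker}(f))\oplus Q_1^{*})$. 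The two points you flag as remaining bookkeeping (the explicit splitting of the transpose and the check that $\rm{IMin}$ genuinely gives the minimal approximation, in $\underline{H(\La)}$ and not just in $H(\La)$) are precisely the computations the paper writes out.
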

\begin{proof}
For any locally projective representation $P=(P_1 \st{f}\rt P_2)$, we have the following short exact sequence
{\footnotesize \[ \xymatrix@R-2pc {  &  ~ 0 \ar[dd]~  &  & P_1\ar[dd]^{[{0~~1}]}~~  & &  P_1 \ar[dd]^{f} \\ (\dagger) _{\ \ \ \ \ \ \ \ \  \  } 0 \ar[r] &  _{ \ \ \ \ } \ar[rr]_{[-f~~1]} & &_{\ \ \ \ \ } \ \ \ar[rr]^{1} _{[1~~f]^t}{\ \ \ \ } & &~~  _{\ \ \ \ \ }\ar[r] & 0. \\ & P_1 & & P_2\oplus P_1 & & P_2 }\]}
which   in fact is  a projective resolution for $P=(P_1 \st{f}\rt P_2)$ but not minimal in general. we will use this short exact sequence to compute the Auslander transpose $\rm{Tr}_{\mathbb{A}_2}(P)$ in $H(\La).$ Applying $(-, \mathbb{A}_2)$ on $(\dagger)$, then we get 
{\footnotesize \[ \xymatrix@R-2pc {  &  ~ P^*_2 \ar[dd]^{[{0~~1}]}~  &  & ~~ P^*_1\ar[dd]^{1}~~  & \rm{Tr}(\rm{Coker} (f)) \oplus Q^{*}_1\ar[dd] \\  &  _{ \ \ \ \ }\ \ \ \ \ \ \   \ar[rr]^{f^*}_{[(-f)^*~~1]^t}  & & \ \ \ \ \ \  \ar[r]&  _{\ \ \ \ \ }\ar[r] & 0. \\ & P^*_2 \oplus P^*_1 & &~~ P^*_1 & 0  }\]}

Note that here the  contravariant functor  $(-, \mathbb{A}_2)$ obtained by considering equivalence between $\mmod T_2(\La)$ and $\rm{rep}(\La, \mathbb{A}_2)$ as follows: 
\[\xymatrix{\rm{rep}(\La, \mathbb{A}_2) \ar[rrr]^{(-, \mathbb{A}_2)} \ar[d]^{\simeq} & & & \rm{rep}(\La^{\rm{op}}, \mathbb{A}_2) \ar[d]^{\simeq} \\ \mmod T_2(\La) \ar[rrr]^{\Hom_{T_2(\La)}(-, T_2(\La))} & & & \mmod T_2(\La^{\rm{op}}). }\]

 To take cokernel in above: $(1)$ We use  that $[(-f)^*~~1]$ is a split epimorphism. $(2)$  If we  consider $P_1 \st{f} \rt P_2$ as an object in $H(\La^{\rm{op}})$, then  we can decompose $P$ as $P\simeq (Q_1 \rt 0)\oplus (Q_2 \st{f'} \rt P'_2) \oplus (0\rt Q)$, where $Q_2 \st{f'} \rt P'_2$ is a minimal presentation of $\rm{Coker}(f)$ and   $Q \in \rm{prj} \mbox{-}\La^{\rm{op}}$. Now due to definition of transpose we obtain $\rm{Coker}(f^*)\simeq \rm{Tr}(\rm{Coker}(f))\oplus Q^*_1.$ By Proposition \ref{Vartens}, it is enough  to find a minimal left $\mathcal{P}(\La)$-approximation for $(\rm{Tr}(\rm{Coker}(f))\oplus Q^*_1 \rt 0)$ in $\underline{H(\La)}$.  Consider the canonical morphism $\Theta_{\rm{Tr}(\rm{Coker}(f))\oplus Q^*_1}$ in $H(\La)$, that is, in fact,  a minimal left $\mathcal{P}(\La)$-approximation  of $(\rm{Tr}(\rm{Coker}(f))\oplus Q^*_1 \rt 0)$ in $H(\La)$. Using this fact that $\Hom_{H(\La)}((M\rt 0), (N \st{1}\rt N)\oplus (0 \rt Z))$, for any $M, N$ and $Z$ in $\mmod \La$, we can deduce $\Theta_{\rm{Tr}(\rm{Coker}(f))\oplus Q^*_1}$ is a minimal left  $\underline{\mathcal{P}(\La)}$-approximation of $(\rm{Tr}(\rm{Coker}(f))\oplus Q^*_1 \rt 0)$ in $\underline{H(\La)}.$ So it completes the proof.
\end{proof}

\section{The Auslander-Reiten duality  for (co)resolving and functorially finite subcategories}
In this section  we will show the existence of Ausalnder-Reiten duality  for some sufficiently nice subcategory. First we start with following construction.

\begin{construction}\label{Constructionsec}
Assume $\CX$ satisfying the assumptions of Theorem \ref{Thirdequivalence} $ (2)$.
The usual duality $D$ for $\rm{mod }\mbox{-}\Lambda$ gives $ \overline{D(\CX)} \simeq (\underline{\CX})^{\op}$. It induces
\[\overline{D(\CX)} \mbox{-} \rm{mod}\simeq (\underline{\CX})^{\op} \mbox{-} \rm{mod} \simeq \mmod \underline{\CX}.\]
The following composition

$$\underline{\CX} \mbox{-} \rm{mod} \st{(\Psi'_1)^{-1}}  \lrt F_{D(\CX)}(\mmod \La^{\op})/ \mathcal{V}'_1  \st{\Psi'}\rt  \overline{D(\CX)} \mbox{-} \rm{mod} \st{\simeq} \lrt \rm{mod} \mbox{-}\underline{\CX},$$
here $\Psi'$, defined in Theorem \ref{Thesecound}, respect to the  subcategory $D(\CX)$ of $\mmod \La^{\op}$, gives us  a duality. Denote this duality by $\Phi_1$.
\end{construction}
 In the sequel, we will show that our duality in above construction is isomorphic to  the usual duality $D$ for when $\underline{\CX}$ considered as a dualizing $k$-variety, see Theorem \ref{StableDualizing}.
\begin{proposition}\label{DualityDec}
Let $\CX$ be functorially finite  subcategory of $\mmod \Lambda $ such that  $ \rm{proj} \mbox{-} \Lambda \subseteq \CX $, and  with   split idempotents (or closed under direct summand). Further, assume $\CX$  is closed under kernel and cokernel of epimorphisms and monomorphisms, respectively. Let $\Phi_1$ be the duality  respect to the subcategory $\CX$,  defined in Construction \ref{Constructionsec}, and $D:\underline{\CX} \mbox{-} \rm{mod} \rt \mmod \underline{\CX}$ be the usual duality for the dualizing variety $\underline{\CX}$. Then $\Phi_1\simeq D.$
\end{proposition}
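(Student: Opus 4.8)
The plan is to exploit that both $\Phi_1$ and $D$ are dualities (exact contravariant equivalences) between the abelian categories $\underline{\CX}\mbox{-}\rm{mod}$ and $\mmod\underline{\CX}$, both of which have enough projective objects by Lemma \ref{stabel=mmodd} and Proposition \ref{stableCOVAR}. Since $\CX$ is closed under direct summand, $\underline{\CX}$ has split idempotents, so the projective objects of $\underline{\CX}\mbox{-}\rm{mod}$ are exactly the representable functors $(\underline{X},-)$ with $X\in\CX$, and those of $\mmod\underline{\CX}$ are the $(-,\underline{X})$. A contravariant exact functor is determined up to natural isomorphism by its restriction to projectives: any $F\in\underline{\CX}\mbox{-}\rm{mod}$ admits a projective presentation $(\underline{X'},-)\rt(\underline{X},-)\rt F\rt 0$, and applying a duality turns it into a copresentation $0\rt\Phi_1(F)\rt\Phi_1((\underline{X},-))\rt\Phi_1((\underline{X'},-))$, so $\Phi_1(F)$ is recovered as a kernel. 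Hence I would first reduce the claim to producing a natural isomorphism $\Phi_1((\underline{X},-))\simeq D((\underline{X},-))$ compatible with the maps induced by morphisms $\underline{X}\rt\underline{X'}$, and then invoke the standard comparison argument to extend it uniquely over all of $\underline{\CX}\mbox{-}\rm{mod}$.

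Next I would compute both sides on a representable. By the definition of the duality for the dualizing $k$-variety $\underline{\CX}$ (Theorem \ref{StableDualizing}), $D((\underline{X},-))$ is the injective functor $\underline{Y}\mapsto D\,\underline{\Hom}_{\La}(X,Y)$. For $\Phi_1$ I would trace the composite of Construction \ref{Constructionsec}. Using the exact sequence from the proof of Proposition \ref{stableCOVAR},
\[0\rt(\underline{X},-)\rt -\otimes_{\La}\Omega(\rm{Tr}X)\vert_{\CX}\rt -\otimes_{\La}Q\vert_{\CX}\rt -\otimes_{\La}\rm{Tr}(X)\vert_{\CX}\rt 0,\]
which by Proposition \ref{InjStable} is an injective copresentation of $(\underline{X},-)$ in $\CX\mbox{-}\rm{mod}$ with terms in $D(\CX)$ (as guaranteed by the equivalence $\Psi'_1$ of Theorem \ref{Thirdequivalence}), I read off $(\Psi'_1)^{-1}((\underline{X},-))=(Q\st{g}\rt\rm{Tr}X)$ in $F_{D(\CX)}(\mmod\La^{\op})$, where $g$ is the projective cover with kernel $\Omega(\rm{Tr}X)$. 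Applying $\Psi'$ (Theorem \ref{Thesecound}, for the subcategory $D(\CX)$) and reading the cokernel functor attached to $0\rt\Omega(\rm{Tr}X)\rt Q\rt\rm{Tr}X\rt 0$ through its long exact $\Ext$-sequence identifies the result with $\Ext^1_{\La^{\op}}(\rm{Tr}X,-)\vert_{D(\CX)}$.

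Finally I would transport this object along the identification $\overline{D(\CX)}\mbox{-}\rm{mod}\simeq(\underline{\CX})^{\op}\mbox{-}\rm{mod}\simeq\mmod\underline{\CX}$ coming from $\overline{D(\CX)}\simeq(\underline{\CX})^{\op}$, so that its value at $\underline{Y}$ becomes $\Ext^1_{\La^{\op}}(\rm{Tr}X,DY)$, and match it with $D\,\underline{\Hom}_{\La}(X,-)$ via the chain of natural isomorphisms
\[D\,\underline{\Hom}_{\La}(X,Y)\simeq D\,\Tor^{\La}_1(Y,\rm{Tr}X)\simeq\Ext^1_{\La^{\op}}(\rm{Tr}X,DY),\]
where the first is the isomorphism $\underline{\Hom}_{\La}(X,-)\simeq\Tor_1(-,\rm{Tr}X)$ cited in the proof of Proposition \ref{stableCOVAR}, and the second combines the symmetry of $\Tor$ with the standard $k$-duality $D\,\Tor^{\La^{\op}}_1(-,Y)\simeq\Ext^1_{\La^{\op}}(-,DY)$ for finitely generated modules over the artin algebra $\La^{\op}$. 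This yields the desired isomorphism on representables.

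I expect the main obstacle to be naturality rather than the pointwise matching: I must verify that each isomorphism above is natural in $X$ (hence respects morphisms $\underline{X}\rt\underline{X'}$, equivalently the Yoneda parametrization of the projectives) and compatible with the identification $\overline{D(\CX)}\simeq(\underline{\CX})^{\op}$. This forces me to track the functoriality of $\rm{Tr}$ (well defined only after passing to the appropriate stable quotient), of the Auslander--Reiten isomorphism $\underline{\Hom}\simeq\Tor_1$, and of the $k$-duality adjunction simultaneously, and then to check that the resulting natural transformation on projectives meets the compatibility hypothesis of the comparison argument, so that it extends to a natural isomorphism $\Phi_1\simeq D$.
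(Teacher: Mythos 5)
Your overall strategy --- reduce to a comparison on projective generators and then trace $\Phi_1$ through Construction \ref{Constructionsec} --- is the same as the paper's, but the direction in which you run the computation opens a genuine gap. To evaluate $(\Psi'_1)^{-1}$ on $(\underline{X},-)$ you take the exact sequence
\[0\rt(\underline{X},-)\rt -\otimes_{\La}\Omega(\rm{Tr}X)\vert_{\CX}\rt -\otimes_{\La}Q\vert_{\CX}\rt -\otimes_{\La}\rm{Tr}(X)\vert_{\CX}\rt 0\]
and declare it an injective copresentation with terms in $D(\CX)$, so that $(\Psi'_1)^{-1}((\underline{X},-))=(Q\rt\rm{Tr}X)$. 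But by Proposition \ref{InjStable} the functor $-\otimes_{\La}M\vert_{\CX}$ is injective in $\CX\mbox{-}\rm{mod}$ exactly when $M\in D(\CX)$, i.e.\ when $D(M)\in\CX$; for your sequence that would require $D(Q)$ (an injective right module), $D(\rm{Tr}X)=\tau_{\La}(X)$ and $D(\Omega\rm{Tr}X)$ all to lie in $\CX$. None of this follows from the hypotheses of the proposition, which only place $\prj\La$ inside $\CX$ and impose closure under kernels of epimorphisms and cokernels of monomorphisms: injective modules and Auslander--Reiten translates of objects of $\CX$ need not belong to $\CX$. (For $\CX=\CP^{<\infty}(\La)$ this genuinely fails; compare Theorem \ref{1-Gortrans}, where $\tau_{\CP^{<\infty}(\La)}(M)$ is a proper approximation of $\tau_{\La}(M)$ precisely because $\tau_{\La}(M)$ does not lie in $\CP^{<\infty}(\La)$ in general.) Hence $(Q\rt\rm{Tr}X)$ is in general not an object of $F_{D(\CX)}(\mmod\La^{\op})$, the claimed value of $(\Psi'_1)^{-1}$ is undefined, and the subsequent application of $\Psi'$ and the identification of the output with $\Ext^1_{\La^{\op}}(\rm{Tr}X,-)\vert_{D(\CX)}$ take place outside the categories on which these equivalences are defined. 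The formula you end up with is the correct one, but the derivation does not establish it.

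The paper sidesteps this by running the comparison the other way: it proves $\Phi_1\circ D\simeq 1$ on the projectives $(-,\underline{X})$ of $\mmod\underline{\CX}$. The object fed into $(\Psi'_1)^{-1}$ is then $D((-,\underline{X}))$, whose copresentation
\[0\rt D(-,\underline{X})\rt -\otimes_{\La}D(X)\vert_{\CX}\rt -\otimes_{\La}D(P)\vert_{\CX}\rt -\otimes_{\La}D(\Omega(X))\vert_{\CX}\rt 0\]
is indexed by $X$, $P$ and $\Omega(X)$, all of which \emph{do} lie in $\CX$ by the closure hypotheses; so $(\Psi'_1)^{-1}(D(-,\underline{X}))=(D(P)\rt D(\Omega(X)))$ is a legitimate object of $F_{D(\CX)}(\mmod\La^{\op})$ and the rest of the trace-through is valid. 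To repair your argument you would either have to add hypotheses guaranteeing $\tau_{\La}(X)$ and the injectives land in $\CX$ (which destroys the generality the proposition needs), or switch to computing the composite $\Phi_1\circ D$ on $(-,\underline{X})$ --- which is exactly the paper's proof.
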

\begin{proof}
It is enough to show that $\Phi_1 \circ D\simeq1,$ here $D:\mmod \underline{\CX} \rt \underline{\CX} \mbox{-} \rm{mod}$ denotes the quasi-inverse of $D$ in the statement, the same notation used to show them. For each $X$ in $\CX$, we have exact sequence
$$0 \rt (-, \Omega(X)) \rt (-, P) \rt (-, X) \rt (-, \underline{X}) \rt 0,$$
in $\mmod \CX.$
By applying $D=\Hom_{k}(-, E)$ on  the above exact sequence we find the following exact sequence
$$0 \rt D(-, \underline{X}) \rt D(-, X) \rt D(-, P) \rt D(-, \Omega(X)) \rt 0$$
in $\CX \mbox{-} \rm{mod}.$
Using known isomorphism $D(-, M)\simeq -\otimes_{\Lambda}D(M)$ for each $M,$ and whose restriction on $\CX$,  then we get
$$ (*)  \quad   0 \rt D(-, \underline{X}) \rt -\otimes_{\Lambda}D(X)\vert_{\CX} \rt -\otimes_{\Lambda}D(P)\vert_{\CX} \rt -\otimes_{\Lambda}D(\Omega(X))\vert_{\CX} \rt 0$$
in $\CX \mbox{-} \rm{mod}.$
 On the other hand, we have exact sequence
$$ (**)  \quad 0 \rt (D(\Omega(X)), -) \rt (D(P), -) \rt (D(X), -) \rt (\overline{D(X)}, -) \rt 0$$
in $D(\CX) \mbox{-} \rm{mod},$ induced by the exact sequence $0 \rt D(X) \rt  D(P) \rt D(\Omega(X)) \rt 0$ in $\mmod \La^{\op}.$

Let us denote by $\Upsilon$ the last equivalence appeared to define duality $\Phi_1$. Now we use the above short exact sequences to prove our claim. Before than since $\Phi_1 \circ D$ is an exact functor it is enough to show that it is isomorphic to identity functor  on projective functors, that is, $\Phi_1 \circ D ((-, \underline{X}))\simeq (-, \underline{X})$ for each $X \in \CX.$ Note that since idempotents split in $\CX$ then all projective functors in $\mmod \CX$ are  representable functors. Consider
 \begin{align*}
 \Phi_1 \circ D ((-, \underline{X}))&= \Upsilon \circ \Psi' \circ (\Psi'_1)^{-1} \circ D ((-, \underline{X})) \\
 & \stackrel{(*)}{\simeq} \Upsilon \circ \Psi' (D(P) \rt D(\Omega(X))) \\
 & \stackrel{(**)}{\simeq} \Upsilon (\overline{D(X)}, -)\\
 & \hspace{1 mm} \simeq (-, \underline{X}).
 \end{align*}
\end{proof}

Let us give some examples holding conditions of Proposition \ref{DualityDec}.

\begin{example}
Trivially $\mmod \La$ as a subcategory of itself holds the assumptions. The subcategories in Example \ref{Examplefirst} satisfy the  conditions we need in Proposition \ref{DualityDec} except here we no need to assume the condition of containing injective modules.  In contrary of \ref{Examplefirst} for the subcategory $\mathcal{P}^{< \infty}(\La)$, no need to assume that here $\La$ is a Gorenstein algebra. Only by a result of Krause and Solberg \cite[Corollary 0.3]{KS} we need to see for which algebras $\mathcal{P}^{< \infty}(\La)$ is contravariantly finite in $\mmod \La.$ There is a very wide of algebras with $\mathcal{P}^{< \infty}(\La)$ is contravariantly finite, see e.g. \cite{AR2}.
\end{example}

This description of $D$ motivate us to give an another proof for the existence of Auslander-Reiten duality as follows:
For $M$ in $\mmod \La$, we have
$$0 \rt (-, M) \rt (-, I) \rt (-, \Omega^{-1}(M)) \rt \Ext_{\La}^1(-, M) \rt 0$$
in $\mmod \CX$, and also,
$$0 \rt \Tor_1(-, D(M)) \rt -\otimes_{\La}D\Omega^{-1}(M) \rt -\otimes_{\La} D(I) \rt -\otimes_{\La} D(M) \rt 0$$
and now by using isomorphism $\Tor_1(-, N)\simeq \Hom(\underline{\rm{Tr}(N)}, -)$, for each $N$ in $\La \mbox{-} \rm{mod}$

$$0 \rt  \Hom_{\La}(\underline{\rm{Tr}(D(M))}, -) \rt -\otimes_{\La}D\Omega^{-1}(M) \rt -\otimes_{\La} D(I) \rt -\otimes_{\La} D(M) \rt 0.$$
Then by using the above exact sequences and our description of $D$, Proposition \ref{DualityDec}, we obtain
$$ D(\Ext_{\La}^{1}(-, M)) \simeq \Phi_1(\Ext_{\La}^1(-, M))\simeq \underline{\Hom}_{\La}(\rm{Tr}D(M), -).$$
This formula is the same as the introduced Auslander-Reiten duality in the Introduction. As we have  seen  the Auslander-Reiten duality can be proved completely in a functional approach.  
 \begin{theorem}\label{AR-duality}

Let $\CX \subseteq \mmod \La$ be resolving-coresolving,  functorially finite and closed under direct summand. Then $\CX$ has  Auslander-Reiten duality, i.e, there exist an equivalence functor $\tau_{\CX}:\underline{\CX} \rt \overline{\CX}$  such that for any $X$ and $Y$, we have the following natural isomorphism in both variables;
\[D\underline{\Hom}_{\La}(X, Y)\simeq \Ext_{\La}^1(Y, \tau_{\CX}(X)).\]

\end{theorem}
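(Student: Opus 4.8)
The plan is to recast the computation made just before the statement as an equivalence $\tau_{\CX}$ between the two stable categories, carrying out everything inside the abelian category $\mmod\underline{\CX}$. First I would observe that the hypotheses on $\CX$ are exactly those of Proposition \ref{enoughinj}: by the Corollary following Lemma \ref{extension}, a functorially finite resolving-coresolving subcategory closed under summands satisfies the conditions of \ref{enoughinj}. Hence $\mmod\underline{\CX}$ and $\overline{\CX}\mbox{-}\rm{mod}$ are abelian with enough injectives, $\underline{\CX}$ is a dualizing $k$-variety by Theorem \ref{StableDualizing}, and both $\underline{\CX}$ and $\overline{\CX}$ have split idempotents since $\CX$ is closed under direct summands.

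The core step is to describe the injective objects of $\mmod\underline{\CX}$ in two compatible ways. On one side, the dualizing-variety duality $D\colon\underline{\CX}\mbox{-}\rm{mod}\rt\mmod\underline{\CX}$ sends the representable projectives $(\underline{X},-)$ to injectives, so $X\mapsto D(\underline{X},-)$ is an equivalence of $\underline{\CX}$ onto the injective objects of $\mmod\underline{\CX}$. On the other side, using the identity $\Phi'(\overline{X},-)=\Ext^1_{\La}(-,X)\vert_{\CX}$ recorded in the proof of Theorem \ref{AusConj}, the duality $\Phi'$ composed with the Yoneda embedding of $\overline{\CX}$ shows that the covariant assignment $B\mapsto\Ext^1_{\La}(-,B)\vert_{\CX}$ is likewise an equivalence of $\overline{\CX}$ onto the injective objects of $\mmod\underline{\CX}$ (here I use that $\overline{\CX}$ has split idempotents, so its projective functors are precisely the representables).

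I would then define $\tau_{\CX}\colon\underline{\CX}\rt\overline{\CX}$ as the composite equivalence $(B\mapsto\Ext^1_{\La}(-,B)\vert_{\CX})^{-1}\circ D\circ(X\mapsto(\underline{X},-))$; equivalently, $\tau_{\CX}(X)$ is the object of $\overline{\CX}$, unique up to isomorphism, characterised by $D(\underline{X},-)\simeq\Ext^1_{\La}(-,\tau_{\CX}(X))\vert_{\CX}$. As a composite of equivalences it is an equivalence, and it is the relative analogue of the classical $D\rm{Tr}$: the isomorphism $D\Ext^1_{\La}(-,M)\simeq\underline{\Hom}_{\La}(\rm{Tr}D(M),-)$ obtained immediately before the statement shows that $\tau_{\CX}$ is compatible with the transpose functor $\rm{Tr}_{\CX}$ of Construction \ref{ConstDul}. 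Evaluating the characterising isomorphism at an object $Y\in\CX$ yields $D\underline{\Hom}_{\La}(X,Y)=[D(\underline{X},-)](Y)\simeq\Ext^1_{\La}(Y,\tau_{\CX}(X))$, which is the asserted formula.

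It remains to upgrade this to a natural isomorphism in both variables. Naturality in $Y$ is immediate, because the displayed isomorphism is an isomorphism of objects of $\mmod\underline{\CX}$, i.e.\ of functors in the variable $Y$. Naturality in $X$ I would read off from the functoriality of the three equivalences $X\mapsto(\underline{X},-)$, $D$, and $\Phi'$ out of which $\tau_{\CX}$ is assembled, so that the isomorphism becomes a natural transformation of bifunctors on $\underline{\CX}^{\op}\times\CX$. I expect this simultaneous naturality to be the main obstacle: one must check that the isomorphisms produced separately in each slot through the zig-zag of equivalences coalesce into a single bifunctorial isomorphism, rather than merely a family of isomorphisms natural in each slot separately. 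The cleanest route is to trace everything back to the isomorphism $D\Ext^1_{\La}(-,M)\simeq\underline{\Hom}_{\La}(\rm{Tr}D(M),-)$, which is manifestly natural in $M$, and to transport that naturality across the equivalences $D$, $\Phi'$ and the Yoneda embeddings.
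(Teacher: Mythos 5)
Your proposal is correct and follows essentially the same route as the paper: both build $\tau_{\CX}$ by composing the dualizing-variety duality $D=\Phi_1$ (Proposition \ref{DualityDec}/Theorem \ref{StableDualizing}) with the duality $\Phi$ of subsection \ref{Firstconstuction}, restricting to representables via Yoneda and using the identification $\Phi'((\overline{B},-))\simeq\Ext^1_{\La}(-,B)\vert_{\CX}$ from the proof of Theorem \ref{AusConj}; your description of the injectives of $\mmod\underline{\CX}$ in two compatible ways is just a repackaging of the paper's restriction of $\Phi\circ D$ to projective functors. Your added attention to bifunctorial naturality is a point the paper passes over silently, but it does not change the argument.
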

\begin{proof}
The assumptions on $\CX$ are exactly  all conditions we need for Proposition \ref{DualityDec} and \ref{enoughinj}. Hence these assumptions guarantees the  existence of dualities $D=\Phi_1:\underline{\CX} \mbox{-} \rm{mod} \rt \mmod \underline{\CX}$, see Construction \ref{Constructionsec} or Theorem \ref{StableDualizing},  and $\Phi: \mmod \underline{\CX} \rt \overline{\CX} \mbox{-} \rm{mod}$, see subsection \ref{Firstconstuction}, respect to the subcategory $\CX$. Consider the composition of $D$ and $\Phi$
$$\underline{\CX} \mbox{-} \rm{mod} \st{D}\rt \mmod \underline{\CX} \st{\Phi} \rt \overline{\CX} \mbox{-} \rm{mod} $$
which gives us an equivalence  between abelian categories $\underline{\CX} \mbox{-} \rm{mod}$ and $\overline{\CX} \mbox{-} \rm{mod}.$ This composition $ \Phi \circ D$ can be restricted on projective functors and then by equivalences $(\underline{\CX})^{\op}\simeq \rm{Prj} \mbox{-} (\underline{\CX} \mbox{-} \rm{mod})$ and $(\overline{\CX})^{\op}\simeq \rm{Prj} \mbox{-} (\overline{\CX} \mbox{-} \rm{mod})$ induced by the  Yoneda Lemma, we find an equivalence $\underline{\CX} \rt \overline{\CX}$, denote it by $\tau_{\CX}.$ The following diagram explains more what we followed
 \[ \xymatrix{ \underline{\CX} \mbox{-}\rm{mod} \ar[r]^D & \mmod \underline{\CX} \ar[r]^{\Phi} & \overline{\CX} \mbox{-}\rm{mod} \\
    \underline{\CX} \ar[u]^{Y} \ar[rr]^{\tau_{\CX}} && \overline{\CX} \ar[u]_{Y'}, }\]
where $Y$ and $Y'$ are the corresponding contravariant functors by the  Yoneda Lemma. Let $X$ be a module in $\CX,$
 \begin{align*}
      \Phi \circ D ((\underline{X}, -))&=(\overline{\tau_{\CX}(X)}, -) \\
      D((\underline{X}, -))  &= \Phi'((\overline{\tau_{\CX}(X)}, -))  \\
       &\simeq\Ext_{\La}^1(-, \tau_{\CX}(X))\vert_{\CX}.
    \end{align*}
the last equivalence can be inferred by a dual argument in the proof of Theorem \ref{AusConj}. Therefore, in this way we prove a relative version of the  Auslander-Reiten duality respect to such subcategories.
\end{proof}

In particular in above theorem, if $\CX=\mmod \La$, then by using  \cite[Hilton-Rees Theorem]{HR} and the original Auslander-Reiten formula, one can deduce that $\tau_{\mmod \La} \simeq D\rm{Tr}=\tau_{\La}.$  \\
We obtain the following as a consequence of Theorem \ref{AR-duality} together with   the original Auslander-Reiten duality.
\begin{corollary}
Let $\CX \subseteq \mmod \La$ be the same as Theorem \ref{AR-duality}. Then for each $X$ and $Y$ in $\CX,$ $\Ext_{\La}^1(Y, \tau(X))\simeq \Ext_{\La}^1(Y, \tau_{\CX}(X)).$
\end{corollary}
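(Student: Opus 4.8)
The plan is to observe that both $\Ext^1$ groups in the statement have already been identified, via two different Auslander-Reiten formulas, with \emph{the same} stable Hom space, so that the corollary follows by transitivity of isomorphism. The genuine content has therefore already been carried out in proving the two duality formulas; what remains is a short piece of bookkeeping.

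First I would invoke the classical Auslander-Reiten duality recalled in the Introduction: for arbitrary $X, Y$ in $\mmod \La$ it furnishes a natural isomorphism $D\underline{\Hom}_{\La}(X, Y)\simeq \Ext_{\La}^1(Y, \tau(X))$ with $\tau = D\rm{Tr}$. Since $\CX \subseteq \mmod \La$, this applies in particular to any pair $X, Y \in \CX$. Next I would apply Theorem \ref{AR-duality}, whose hypotheses (resolving-coresolving, functorially finite, closed under direct summand) are exactly the standing assumptions of the corollary, to obtain for $X, Y \in \CX$ the natural isomorphism $D\underline{\Hom}_{\La}(X, Y)\simeq \Ext_{\La}^1(Y, \tau_{\CX}(X))$.

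The one point requiring care is that the stable Hom entering the two formulas is literally the same object. This is guaranteed by the convention recorded after Theorem \ref{IndTheorem}: in the relative formula both the stable Hom (modulo projectives) and the $\Ext^1$ are computed inside the ambient category $\mmod \La$, so for $X, Y \in \CX$ the group $\underline{\Hom}_{\La}(X, Y)$ coincides with the one appearing in the classical formula. Composing the first isomorphism with the inverse of the second then yields $\Ext_{\La}^1(Y, \tau(X))\simeq D\underline{\Hom}_{\La}(X, Y)\simeq \Ext_{\La}^1(Y, \tau_{\CX}(X))$, as claimed; note that although $\tau(X)=D\rm{Tr}(X)$ need not lie in $\CX$ while $\tau_{\CX}(X)$ does, both $\Ext^1$ groups are taken in $\mmod \La$, so the comparison is meaningful and, being a composite of natural isomorphisms, is natural in both variables. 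I anticipate no real obstacle beyond verifying this identification of stable Hom groups, the substantive work already residing in the two underlying duality theorems.
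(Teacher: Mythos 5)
Your argument is exactly the one the paper intends: the corollary is stated as ``a consequence of Theorem \ref{AR-duality} together with the original Auslander-Reiten duality,'' and you obtain it precisely by composing the classical formula $D\underline{\Hom}_{\La}(X,Y)\simeq \Ext_{\La}^1(Y,\tau(X))$ with the relative one from Theorem \ref{AR-duality}, after checking that both use the same stable Hom and $\Ext^1$ taken in $\mmod\La$. The proof is correct and matches the paper's approach.
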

By help of a result of Raymundo Bautista \cite{Ba}  in the following example we compute relative Auslander-Reiten translation for a certain subcategory of morphism category over a self-injective algebra.
\begin{example}
For an arbitrary artin algebra $\La$ let $\mathcal{P}(\La)$, resp. $\mathcal{I}(\La)$, be the category of locally projective, resp. injective,  representation on $\mathbb{A}_2$, see Definition \ref{Locallyproje}. The functor $-\otimes_{\La}D(\La)$, known as Nakayam functor and  usually denoted by $\mathcal{N}$,  gives an equivalence between $\rm{prj} \mbox{-} \La$ and $\rm{inj } \mbox{-} \La$. Thus this functor induces an equivalence $F:\mathcal{P}(\La) \rt \mathcal{I}(\La)$. Since $F$ sends the subcategory of projective objects in $H(\La)$, clearly is contained in  $\mathcal{P}(\La)$, into $\mathcal{C}=\rm{add} \{(I\st{1}\rt I)\oplus (0 \rt I) \mid I \in  \rm{inj}\mbox{-} \La \} \subseteq \mathcal{I}(\La)$, then we have equivalence $\hat{F}:\underline{\mathcal{P}(\La)} \rt \mathcal{I}(\La)/I_{\mathcal{C}}$. Consider the functor $\rm{\mathcal{C}ok}:\mathcal{P}(\La) \rt \mmod \La $ given by $\rm{\mathcal{C}ok}(P_1 \st{f} \rt P_2)=\rm{Coker}(f)$, and also functor $\rm{\mathcal{K}er}: \mathcal{I}(\La) \rt \mmod \La$ given by $\rm{\mathcal{K}er}(I_1 \st{g} \rt I_2)=\rm{Ker}(g).$ The functor $\rm{\mathcal{C}ok}$, resp. $\rm{\mathcal{K}er}$, induces an equivalence between $\mathcal{P}(\La)/I_{\mathcal{C}'}$, resp. $\mathcal{I}(\La)/I_{\mathcal{C}}$ and $\mmod \La,$ where $\mathcal{C}'=\rm{add} \{(P\st{1}\rt P)\oplus (P \rt 0) \mid P \in  \rm{prj}\mbox{-} \La \}$. Denote these two resulting  equivalences by $\overline{\rm{\mathcal{C}ok}}$ and $\overline{\rm{\mathcal{K}er}}$, respectively. Thus we have the following 
equivalence
\[ \hat{A}:=\underline{\mathcal{P}(\La)} \st{\hat{F}}\rt  \mathcal{I}(\La)/I_{\mathcal{C}} \st{\overline{\rm{\mathcal{K}er}}} \lrt \mmod \La \st{(\overline{\rm{\mathcal{C}ok}})^{-1}} \lrt \mathcal{P}(\La)/I_{\mathcal{C}'}. \]
Let us explain the above notations in the context of exact categories as done in \cite{Ba}. Since $\mathcal{P}(\La)$ is a subcategory of $H(\La)$ closed under extension, so it gets an exact structure inherited by abelian structure of $H(\La).$  In fact, $\mathcal{P}(\La)/I_{\mathcal{C}'}$, resp. $\underline{\mathcal{P}(\La)}$, defined above, are stable category of $\mathcal{P}(\La)$ modulo of injective, resp. projective objects in the corresponding exact category.
In \cite{Ba} was proved by  Raymundo Bautista  for any $P$ and $Q$ in $\mathcal{P}(\La)$, there is a natural isomorphism
\[ (\dagger) \ \ \ \  \  \ \Ext_{H(\La)}^1(P, Q)\simeq D\underline{\rm{Hom}}_{H(\La)}(Q, \hat{A}(P)).    \]
Suppose $\La$ is self-injective. Then $\mathcal{P}(\La)=\mathcal{I}(\La)$ and also $\mathcal{P}(\La)/I_{\mathcal{C}'}= \overline{\mathcal{P}(\La)}.$ Recall that $\underline{\mathcal{P}(\La)}$, resp. $\overline{\mathcal{P}(\La)}$, are the stable category of $\mathcal{P}(\La)$ modulo projective, resp. injective, objects in $H(\La)$. In this case, $\mathcal{P}(\La)$ as a subcategory of  $H(\La)$ satisfies the assumptions of Theorem \ref{AR-duality}. So there is the equivalence functor $\tau_{\mathcal{P}(\La)}: \underline{\mathcal{P}(\La)} \rt \overline{\mathcal{P}(\La)}$ such that for given $P$ and $Q$ in $\mathcal{P}(\La)$
\[(\dagger \dagger) \ \ \ \  \  \  \Ext_{H(\La)}^1(P, Q)\simeq D\underline{\rm{Hom}}_{H(\La)}(Q, \tau_{\mathcal{P}(\La)}(P)).   \]
Therefore, by $(\dagger)$ and $(\dagger \dagger)$ we can  conclude that $\underline{\rm{Hom}}_{H(\La)}(Q, \hat{A}(P))\simeq\underline{\rm{Hom}}_{H(\La)}(Q, \tau_{\mathcal{P}(\La)}(P)),$ but it implies $\hat{A}\simeq \tau_{\mathcal{P}(\La)},$ by Yoneda Lemma. Moreover, it can be verified by the definition of this equivalence in conjunction with duality $\rm{Tr}_{\mathcal{P}(\La)}$ in Proposition \ref{TransP}   that $\tau_{\mathcal{P}(\La)}\simeq D\rm{Tr}_{\mathcal{P}(\La)}.$ Here, similar situation for the relation between  usual Auslander-Reiten and usual transpose, $\tau\simeq D\rm{Tr}$, happens for this certain case.
\end{example}
We keep all notations in the above example in the rest of this section. In fact,  in the above example, we compute $\tau_{\mathcal{P}^{<\infty}(T_2(\La))}$ for algebra $T_2(\La)$ whenever $\La $ is a self-injective algebra. Since in this case $T_2(\La)$ is  an $1$-Gorenstein 
algebra and then $\mathcal{P}^{< \infty}(T_2(\La))\simeq \mathcal{P}(\La)$ in respect of identification $\mmod T_2(\La)\simeq H(\La).$ we can obtain more information in the following for $\tau_{\mathcal{P}^{<\infty}(T_2(\La))}$, or equally $\tau_{\mathcal{P}(\La)},$ for more specific self-injective algebras.
\begin{corollary}
Let $\La$ be a self-injective.
\begin{itemize}
\item[$(i)$] If $\tau_{\La}$ coincide with identity functor, then $\tau_{\mathcal{P}^{<\infty}(T_2(\La))},$ or equally $\tau_{\CP(\La)},$ is also identity functor.
\item[$(ii)$] Assume that $\tau_{\La}$ coincides with $\Omega^2,$  e.g.  any symmetric algebra. Let $P_1\st{f}\rt P_0$ be in $\CP(\La)$. Consider the following exact sequence
\[P_3 \st{g}\rt P_2 \st{h}\rt P_1 \st{f}\rt P_0 \rt \rm{Coker}(f)\rt 0\]
which all terms, possibly exept $\rm{Coker}(f)$ are projective objects. Then $\tau_{\CP(\La)}(f)\simeq g$ in $\overline{\CP(\La)}.$
\end{itemize}
\end{corollary}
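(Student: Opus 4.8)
The plan is to read $\tau_{\CP(\La)}$ off the equivalence $\hat{A}\simeq \tau_{\CP(\La)}$ established in the preceding example, and to transport the whole computation along the equivalence $\overline{\rm{\mathcal{C}ok}}\colon \overline{\CP(\La)}\st{\simeq}\rt \mmod\La$. Since $\La$ is self-injective we have $\overline{\CP(\La)}=\CP(\La)/I_{\mathcal{C}'}$, and by construction $\hat{A}=(\overline{\rm{\mathcal{C}ok}})^{-1}\circ\overline{\rm{\mathcal{K}er}}\circ\hat{F}$. Composing with $\overline{\rm{\mathcal{C}ok}}$ therefore cancels the last factor, so for an object $P=(P_1\st{f}\rt P_0)$ of $\CP(\La)$ I would first obtain
\[
\overline{\rm{\mathcal{C}ok}}\bigl(\tau_{\CP(\La)}(P)\bigr)\simeq \overline{\rm{\mathcal{K}er}}\bigl(\hat{F}(P)\bigr)=\Ker(\mathcal{N}f),
\]
where $\mathcal{N}=-\otimes_{\La}D(\La)$ is the Nakayama functor. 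This reduces both statements to understanding the single module $\Ker(\mathcal{N}f)$.

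Next I would record two facts. Because $\La$ is self-injective, $\La$ is an injective module, hence $\Hom_{\La}(-,\La)$ is exact and so $\mathcal{N}\simeq D\Hom_{\La}(-,\La)$ is an exact functor; consequently $\Ker(\mathcal{N}f)\simeq \mathcal{N}\,\Ker(f)$. Moreover, from the presentation $P_1\st{f}\rt P_0\rt \Coker(f)\rt 0$ one reads $\Ker(f)\simeq \Omega^2\Coker(f)$ in $\underline{\mmod}\La$. Combining these with the standard identity $\tau_{\La}\simeq \mathcal{N}\Omega^2$ for self-injective algebras yields the intertwining relation
\[
\overline{\rm{\mathcal{C}ok}}\bigl(\tau_{\CP(\La)}(P)\bigr)\simeq \mathcal{N}\,\Omega^2\Coker(f)\simeq \tau_{\La}\bigl(\Coker(f)\bigr)=\tau_{\La}\bigl(\overline{\rm{\mathcal{C}ok}}(P)\bigr),
\]
i.e. $\overline{\rm{\mathcal{C}ok}}\circ \tau_{\CP(\La)}\simeq \tau_{\La}\circ \overline{\rm{\mathcal{C}ok}}$. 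This single relation is the heart of the corollary.

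Granting it, both parts are immediate. For $(i)$, if $\tau_{\La}\simeq \id$ then $\overline{\rm{\mathcal{C}ok}}(\tau_{\CP(\La)}(P))\simeq \Coker(f)=\overline{\rm{\mathcal{C}ok}}(P)$, and since $\overline{\rm{\mathcal{C}ok}}$ is an equivalence I conclude $\tau_{\CP(\La)}(P)\simeq P$ for every $P$, so $\tau_{\CP(\La)}\simeq\id$. For $(ii)$, if $\tau_{\La}\simeq\Omega^2$ then $\overline{\rm{\mathcal{C}ok}}(\tau_{\CP(\La)}(P))\simeq \Omega^2\Coker(f)$; on the other hand, applying the first isomorphism theorem to the exact sequence $P_3\st{g}\rt P_2\st{h}\rt P_1\st{f}\rt P_0$ gives $\Coker(g)\simeq \im(h)=\Ker(f)\simeq \Omega^2\Coker(f)$, hence $\overline{\rm{\mathcal{C}ok}}(g)\simeq \overline{\rm{\mathcal{C}ok}}(\tau_{\CP(\La)}(P))$ and therefore $\tau_{\CP(\La)}(f)\simeq g$ in $\overline{\CP(\La)}$.

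The hard part will be the bookkeeping of projective-injective summands: the equivalence $\overline{\rm{\mathcal{C}ok}}$ lands in the honest category $\mmod\La$, whereas $\Ker(f)\simeq\Omega^2\Coker(f)$ and $\tau_{\La}\simeq\mathcal{N}\Omega^2$ only hold in the stable category. I would need to verify that the summands annihilated on passing to $\overline{\CP(\La)}$ (namely those in $\mathcal{C}'$) correspond exactly to the projectives discarded on the module side under $\mathcal{N}$. For symmetric algebras this difficulty disappears, since there $\mathcal{N}\simeq\id$ as a functor on $\mmod\La$, so that $\Ker(\mathcal{N}f)=\Ker(f)\simeq\Coker(g)$ on the nose; this explains the phrase ``e.g. any symmetric algebra'' in $(ii)$. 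The only external inputs are the exactness of $\mathcal{N}$ and the formula $\tau_{\La}\simeq\mathcal{N}\Omega^2$, both standard for self-injective algebras.
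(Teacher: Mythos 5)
Your argument is correct and follows essentially the same route as the paper: both read $\tau_{\CP(\La)}$ off the identification $\hat{A}\simeq\tau_{\CP(\La)}$ from the preceding example and reduce to the standard relation between the Nakayama functor, $\Omega^2$ and $\tau_{\La}$ (the paper phrases this via the exact sequence $0\rt\tau(\Coker f)\rt\CN(P_1)\rt\CN(P_0)\rt\CN(\Coker f)\rt 0$, you via exactness of $\CN$ and $\Ker f\simeq\Omega^2\Coker f$, which is the same fact). Your packaging as the single intertwining relation $\overline{\rm{\mathcal{C}ok}}\circ\tau_{\CP(\La)}\simeq\tau_{\La}\circ\overline{\rm{\mathcal{C}ok}}$ is a nice uniform way to get both parts, where the paper only writes out $(i)$ and leaves $(ii)$ to the reader; the projective-summand bookkeeping you flag is handled in the paper by the analogous reduction to $\Coker(f)$ having no projective summands.
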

\begin{proof}
$(i)$  We prove the statement for the equivalent case, that is, $\tau_{\mathcal{P}(\La)}\simeq 1_{\mathcal{P}}.$ Let $P_1 \st{f}\rt P_0 \in \CP (\La),$ we can assume that $\rm{Coker}(f)$ has no 
projective direct summand in $\mmod \La.$ Then by our assumption we have the following exact sequence
$$0 \rt \tau(\rm{Coker}(f))\simeq \rm{Coker}(f) \rt \CN(P_1) \st{\CN(f)} \rt \CN(P_0) \rt \CN(\rm{Coker}(f)) \rt 0. $$

Notice that we have used here  the fact which $\CN(M)\simeq \Omega^{-2}(\tau(M)),$ for any module without projective summand. Now by our computation in the above example, we
 can get $\tau_{\CP(\La)}(P_1 \st{f}\rt P_0)=P_1 \st{f} \rt P_0.$ So we are done. The proof of part $(ii)$ is similar to $(i)$, so we leave it to the reader.
\end{proof}

 In sequel, we shall try to explain a procedure to  compute $\tau_{\mathcal{P}^{<\infty}(\La)}$ when $\La$ is an arbitrary $1$-Gorenstein algebra. We refer to \cite{GLS} for many examples which show that $\tau_{\CP(\La)^{<\infty}}$ can be  a proper translation, i.e. not to be isomorphic to usual Auslander-Reiten translation in general. \\
  Let us first recall some facts from \cite{RS3}. Let $(A \st{f}\rt B)$ be an arbitrary object in $H(\La)$. Assume that $e':\rm{Ker}(f) \rt I(\rm{Ker}(f))$ be an injective envelope and choose an extension $e: A \rt I(\rm{Ker}(f)) $ of $e'.$ Consider the following object in $H(\La)$
  $$\rm{Mimo}(f)=[f, e]:  \ A \rt B\oplus I(\rm{Ker}(f)).$$
  
Moreover, there is a canonical   morphism $(1_A, [1_, 0]): \rm{Mimo}(f) \rt f $ in $H(\La).$ Let $\mathcal{S}(\La)$ stand for the subcategory of all monomorphisms in $H(\La).$  It is shown in Proposition 2.4 of \cite{RS3} the morphism $\rm{Mimo}(f) \rt f$ is a minimal right approximation of $f$ in $\mathcal{S}(\La).$ This fact plays an important role to prove our next result. By applying cokernel on $(1_A, [1_, 0])$, we obtain morphism $h:\rm{Coker}(\rm{Mimo}(f)) \rt \rm{Coker}(f)$. By use of a general fact, one can uniquely decompose $h$ as $h=[h_1, h_2]: \rm{Coker}(\rm{Mimo}(f))=N_1\oplus N_2 \rt \rm{Coker}(f)$ such that $h_1$ is right minimal and $h_2=0.$ Let us denote $N_1$ by $\overline{\rm{Coker}}(\rm{Mimo}(f)).$ 

We need also the following lemma.
\begin{lemma}\cite[Proposition 3.3]{AR2}\label{Applic}
Let $\CX$ be a resolving, contravariently finite subcategory of $\mmod \La.$ Let $f_Y:X_Y \rt Y $ be a minimal right $\CX$-approximation of $Y.$ Then $f_Y$ induces an isomorphism
\[ \Ext^i_{\La}(X, X_Y)\simeq \Ext^i_{\La}(X, Y)\]
for all $i\geq 1$ and $X \in \CX.$
\end{lemma}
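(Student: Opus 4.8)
=== PROOF PROPOSAL ===

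The statement to prove is Lemma \ref{Applic}, attributed to \cite[Proposition 3.3]{AR2}: for a resolving, contravariantly finite subcategory $\CX$ of $\mmod \La$, if $f_Y : X_Y \rt Y$ is a minimal right $\CX$-approximation, then it induces isomorphisms $\Ext^i_{\La}(X, X_Y) \simeq \Ext^i_{\La}(X, Y)$ for all $i \geq 1$ and all $X \in \CX$.

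\textbf{The approach.} The plan is to pass from the approximation $f_Y$ to its kernel, exploit the defining property of right approximations to control $\Ext^0$, and then use the resolving hypothesis together with dimension-shifting to propagate the isomorphism up the $\Ext$-tower. First I would form the short exact sequence coming from the approximation. Because $\CX$ is resolving it contains the projectives, so every object of $\CA$ admits epimorphisms from objects of $\CX$; combined with contravariant finiteness this lets me arrange $f_Y$ to be an epimorphism (a minimal right $\CX$-approximation of $Y$ is surjective since any projective cover of $Y$ factors through $X_Y$). Thus I get
\[
0 \rt K \rt X_Y \st{f_Y} \rt Y \rt 0,
\]
where $K = \Ker(f_Y)$. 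The crucial claim is that $K$ again lies in $\CX$. This follows from the resolving property: $K$ is the kernel of an epimorphism $X_Y \rt Y$ between objects of... well, $Y$ need not be in $\CX$, so this needs the sharper fact that for a minimal right $\CX$-approximation the kernel $K$ satisfies $\Ext^1_{\La}(X', K) = 0$ for all $X' \in \CX$, i.e. $K \in \CX^{\perp}$ in the relevant sense.

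\textbf{Key steps in order.} The engine is the long exact sequence in $\Ext_{\La}(X, -)$ applied to the short exact sequence above, for a fixed $X \in \CX$:
\[
\cdots \rt \Ext^i_{\La}(X, K) \rt \Ext^i_{\La}(X, X_Y) \rt \Ext^i_{\La}(X, Y) \rt \Ext^{i+1}_{\La}(X, K) \rt \cdots
\]
If I can show $\Ext^i_{\La}(X, K) = 0$ for all $i \geq 1$ and all $X \in \CX$, then the connecting maps force $\Ext^i_{\La}(X, X_Y) \simeq \Ext^i_{\La}(X, Y)$ for every $i \geq 1$, which is exactly the claim. So the whole lemma reduces to the vanishing $\Ext^{\geq 1}_{\La}(\CX, K) = 0$. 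For $i = 1$ this is the defining property of a right $\CX$-approximation combined with minimality: applying $\Hom_{\La}(X, -)$ to the sequence, surjectivity of $\Hom_{\La}(X, X_Y) \rt \Hom_{\La}(X, Y)$ (which is the approximation property) yields $\Ext^1_{\La}(X, K) = 0$ once one checks the map $\Hom_{\La}(X, Y) \rt \Ext^1_{\La}(X,K)$ is zero. For higher $i$, I would use that $\CX$ is resolving to take syzygies: replacing $X$ by a first syzygy $\Omega X$ (which again lies in $\CX$ since $\CX$ is resolving and contains projectives), dimension-shifting gives $\Ext^{i+1}_{\La}(X, K) \simeq \Ext^{i}_{\La}(\Omega X, K)$, and an induction on $i$ starting from the $i=1$ case closes the argument.

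\textbf{Main obstacle.} The hard part will be establishing the base case vanishing $\Ext^1_{\La}(X, K) = 0$ for all $X \in \CX$, which is where minimality of the approximation genuinely enters. The defining exactness of a right approximation only gives surjectivity of $\Hom_{\La}(-, X_Y)\vert_{\CX} \rt \Hom_{\La}(-, Y)\vert_{\CX}$, and one must argue that this surjectivity, together with the fact that $K \in \CX$ (or at least that $K$ is orthogonal to $\CX$ under $\Ext^1$), forces the connecting homomorphism into $\Ext^1_{\La}(X, K)$ to vanish identically. Showing $K \in \CX$ is itself the subtle point: it uses that $\CX$ is resolving (closed under kernels of epimorphisms between its objects) but requires first knowing $X_Y$ and the relevant comparison object lie in $\CX$, so I expect to invoke the standard Wakamatsu-type lemma for minimal approximations — namely that the kernel of a minimal right approximation by a resolving subcategory is right-orthogonal to that subcategory. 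Once that orthogonality is in hand, everything else is the routine long-exact-sequence and dimension-shifting bookkeeping sketched above.
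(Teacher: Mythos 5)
The paper does not actually prove this lemma: it is quoted from Auslander--Reiten (cited as \cite[Proposition 3.3]{AR2}), so there is no in-paper argument to compare against. Your reconstruction is the standard proof and is structurally correct: $f_Y$ is surjective because $\CX$ contains the projectives; Wakamatsu's lemma (applicable since a resolving subcategory is closed under extensions) gives $\Ext^1_{\La}(X', K)=0$ for $K=\Ker f_Y$ and all $X'\in\CX$; dimension shifting along syzygies, which remain in $\CX$ by the resolving hypothesis, upgrades this to $\Ext^i_{\La}(X',K)=0$ for all $i\geq 1$; and the long exact sequence of $\Ext_{\La}(X,-)$ applied to $0\rt K\rt X_Y\rt Y\rt 0$ finishes. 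You are also right to retreat from the claim that $K\in\CX$ (it need not be, since $Y\notin\CX$ in general); the correct statement is the $\Ext$-orthogonality. One caveat on your middle paragraph: surjectivity of $\Hom_{\La}(X,X_Y)\rt\Hom_{\La}(X,Y)$ is, by exactness, \emph{equivalent} to the vanishing of the connecting map $\Hom_{\La}(X,Y)\rt\Ext^1_{\La}(X,K)$, and yields only the injectivity of $\Ext^1_{\La}(X,K)\rt\Ext^1_{\La}(X,X_Y)$, not the vanishing of $\Ext^1_{\La}(X,K)$ itself. The real content is the pushout-plus-minimality argument inside Wakamatsu's lemma, which shows that every class of $\Ext^1_{\La}(X,K)$ maps to a split extension in $\Ext^1_{\La}(X,X_Y)$; combined with the injectivity just mentioned this gives the base-case vanishing. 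Since you ultimately invoke Wakamatsu's lemma by name as the key input, the proposal stands.
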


\begin{theorem} \label{1-Gortrans}
Assume that $\La$ is a $1$-Gorenstein algebra. Let $M$ be a non-projective module  in $\CP^{<\infty}(\La).$ Then there is the following isomorphism in $\overline{\CP^{<\infty}(\La)}$
\[ \tau_{\CP^{<\infty}(\La)}(M)\simeq \overline{\rm{Coker}}(\rm{Mimo}(f))\]
where $P_1 \st{f} \rt P_0 \rt \tau_{\La}(M) \rt 0$ is a minimal projective presentation of $\tau_{\La}(M).$ In particular, for any $N \in \CP^{\infty}(\La)$, 
\[D\underline{\Hom}_{\La}(M, N)\simeq \Ext_{\La}^1(N, \overline{\rm{Coker}}(\rm{Mimo}(f))).\]
\end{theorem}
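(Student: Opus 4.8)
The plan is to identify $\tau_{\CP^{<\infty}(\La)}(M)$ with the \emph{minimal right $\CP^{<\infty}(\La)$-approximation} of the ordinary Auslander--Reiten translate $\tau_{\La}(M)=D\rm{Tr}(M)$, and then to recognize $\overline{\rm{Coker}}(\rm{Mimo}(f))$ as precisely that approximation. Writing $\CX=\CP^{<\infty}(\La)$, the backbone is a comparison of two Auslander--Reiten formulas: for every $Y\in\CX$ there is a chain of isomorphisms, natural in $Y$,
\[\Ext_{\La}^1(Y,\tau_{\CX}(M))\simeq D\underline{\Hom}_{\La}(M,Y)\simeq \Ext_{\La}^1(Y,\tau_{\La}(M)),\]
the first from Theorem \ref{AR-duality} and the second from classical Auslander--Reiten duality. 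Since $\overline{V}\mapsto \Ext_{\La}^1(-,V)\vert_{\CX}$ is fully faithful on $\overline{\CX}$ (it is $\Phi'$ of subsection \ref{Firstconstuction} precomposed with the Yoneda embedding $\overline{\CX}\hookrightarrow \overline{\CX}\mbox{-}\rm{mod}$), it suffices to exhibit an object $W\in\CX$ with $\Ext_{\La}^1(Y,W)\simeq \Ext_{\La}^1(Y,\tau_{\La}(M))$ naturally in $Y\in\CX$; such a $W$ is forced to be $\tau_{\CX}(M)$ in $\overline{\CX}$. The ``in particular'' clause is then immediate by plugging $\tau_{\CX}(M)\simeq W$ into Theorem \ref{AR-duality}.

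For the key local computation I would unwind $\rm{Mimo}(f)=[f,e]\colon P_1\to P_0\oplus I$, with $I=I(\Ker f)$ and $e\colon P_1\to I$ extending the injective envelope $\Ker f\hookrightarrow I$. This map is a monomorphism, giving a short exact sequence $0\to P_1\to P_0\oplus I\to C\to 0$ with $C=\rm{Coker}(\rm{Mimo}(f))$. Here the $1$-Gorenstein hypothesis enters twice: injective modules have $\pd\le 1$, and every finitely generated module of finite projective dimension has $\pd\le 1$, so $\CP^{<\infty}(\La)$ is exactly the class of modules of projective dimension $\le 1$. The sequence then forces $\pd C\le 1$, i.e. $C\in\CX$. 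The canonical morphism $\rm{Mimo}(f)\to f$ induces an epimorphism $h\colon C\to\rm{Coker}(f)=\tau_{\La}(M)$, and computing $\Ker h$ by tracking the $I$-summand modulo the image yields $\Ker h\simeq I/\Ker f=\Omega^{-1}(\Ker f)$.

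I would then show $h$ is a right $\CX$-approximation: for $Y\in\CX$, dimension shifting along $0\to\Ker f\to I\to\Omega^{-1}(\Ker f)\to 0$ gives $\Ext_{\La}^1(Y,\Omega^{-1}(\Ker f))\simeq\Ext_{\La}^2(Y,\Ker f)=0$, since $\pd Y\le 1$; applying $\Hom_{\La}(Y,-)$ to $0\to\Ker h\to C\xrightarrow{h}\tau_{\La}(M)\to 0$ then shows $\Hom_{\La}(Y,C)\to\Hom_{\La}(Y,\tau_{\La}(M))$ is onto. Passing to the right-minimal version $h=[h_1,0]$ with $h_1\colon\overline{\rm{Coker}}(\rm{Mimo}(f))\to\tau_{\La}(M)$ (the defining decomposition of $\overline{\rm{Coker}}$ recalled before the theorem), $h_1$ remains an approximation and is right minimal, hence is the minimal right $\CX$-approximation of $\tau_{\La}(M)$; its source lies in $\CX$ as a summand of $C$. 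As $\CP^{<\infty}(\La)$ is resolving and contravariantly finite, Lemma \ref{Applic} applies to $h_1$ and yields $\Ext_{\La}^1(Y,\overline{\rm{Coker}}(\rm{Mimo}(f)))\simeq\Ext_{\La}^1(Y,\tau_{\La}(M))$ naturally in $Y$. Feeding this into the comparison chain of the first paragraph identifies $\tau_{\CX}(M)\simeq\overline{\rm{Coker}}(\rm{Mimo}(f))$ in $\overline{\CX}$.

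I expect the third paragraph to be the crux. The delicate point is that $\Ker h$ is \emph{not} injective---it is the cosyzygy $\Omega^{-1}(\Ker f)$---so the approximation property is not automatic and rests squarely on the $1$-Gorenstein hypothesis, through the vanishing of $\Ext_{\La}^{\ge 2}(Y,-)$ on $\CP^{<\infty}(\La)$. Verifying $C\in\CP^{<\infty}(\La)$ and the reduction to the right-minimal $\overline{\rm{Coker}}$ are the remaining points requiring care, but are routine once the identification $\CP^{<\infty}(\La)=\{\,\pd\le 1\,\}$ is established.
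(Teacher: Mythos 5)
Your proposal is correct, and it follows the same global skeleton as the paper's proof (realize $\overline{\rm{Coker}}(\rm{Mimo}(f))$ as the minimal right $\CP^{<\infty}(\La)$-approximation of $\tau_{\La}(M)$, apply Lemma \ref{Applic}, compare the two Auslander--Reiten formulas, and conclude by full faithfulness of $W\mapsto \Ext^1_{\La}(-,W)\vert_{\CX}$), but it proves the crucial approximation step by a genuinely different argument. The paper establishes that $h:\rm{Coker}(\rm{Mimo}(f))\rt \tau_{\La}(M)$ is a right $\CP^{<\infty}(\La)$-approximation by working inside the morphism category $H(\La)$: given $g:Z\rt\tau_{\La}(M)$ with $\pd Z\le 1$, it lifts $g$ to a map of minimal projective presentations, uses that the presentation of $Z$ is a monomorphism to land in $\mathcal{S}(\La)$, and then invokes the Ringel--Schmidmeier fact that $\rm{Mimo}(f)\rt f$ is a (minimal) right $\mathcal{S}(\La)$-approximation before passing to cokernels. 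You instead stay in $\mmod\La$: you identify $\Ker h\simeq \Omega^{-1}(\Ker f)$ and kill the obstruction $\Ext^1_{\La}(Y,\Omega^{-1}(\Ker f))\simeq\Ext^2_{\La}(Y,\Ker f)=0$ using $\pd Y\le 1$. Both routes use the $1$-Gorenstein hypothesis only through the identification $\CP^{<\infty}(\La)=\{\pd\le 1\}$; yours is more self-contained (it needs \cite{RS3} only for the definition of $\rm{Mimo}$ and the right-minimal decomposition of $h$, not for the approximation property in $\mathcal{S}(\La)$), while the paper's lifting argument makes the role of the submodule category explicit and meshes with the surrounding machinery. Your explicit verifications that $\rm{Coker}(\rm{Mimo}(f))\in\CP^{<\infty}(\La)$ and that $\Ker h$ is the cosyzygy are correct and are points the paper leaves implicit.
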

\begin{proof}

If we consider $(P_1 \st{f} \rt P_0)$ as  an object in $H(\La)$,  then in view of above facts, we have a minimal right  $\mathcal{S}(\La)$-approximation $(1_{P_1}, [1_{P_0},0]):\rm{Mimo}(f) \rt f$ in $H(\La).$ Then by taking cokernel, we reach morphism $h:\rm{Coker}(\rm{Mimo}(f)) \rt \tau_{\La}(M)$ in $\mmod \La.$ We first show that $h$ is a   right $\CP^{<\infty }(\La)$-approximation  of $\tau(M).$ Let $Z$ be in $\CP^{< \infty }(\La)$ and $g:Z \rt \tau(M)$ a morphism in $\mmod \La.$ We need to find $g':Z \rt \rm{Coker}(\rm{Mimo}(f))$ such that $h\circ g'=g.$ Let $Q_1 \st{d} \rt Q_0 \rt Z \rt 0$ be a minimal projective presentation of $Z$. Since $Z \in \CP^{< \infty}(\La)$, then it implies $d$ is a monomorphism. Note that since $\La$ is a $1$-Gorenstein algebra, then $\CP^{< \infty}(\La)$ is exactly modules with projective dimension less than two. By lifting property, we can obtain morphism $G$ from $(Q_1 \st{d}\rt Q_0)$ to $(P_1 \st{f} \rt P_0)$ in $H(\La).$ On the other hand, since $d \in \CS(\La)$, then there exists
morphism $G':d \rt \rm{Mimo}(f)$ such that $(1_{P_1}, [1_{P_0}, 0])\circ G'=G.$ Now, by taking cokernel, we get morphism $g':Z \rt \rm{Coker}(\rm{Mimo}(f))$ in $\mmod \La$ which satisfies the condition we need. Thus $h$ is a right $\CP^{< \infty }(\La)$-approximation of $\tau_{\La}(M)$ in $\mmod \La$. If we decompose $h$ as $h=[h_1, h_2]: \rm{Coker}(\rm{Mimo}(f))=N_1\oplus N_2 \rt \rm{Coker}(f)=\tau_{\La}(M)$ such that $h_1$ is right minimal and $h_2=0.$ By our notation $N_1=\overline{\rm{Coker}}(\rm{Mimo}(f))$. As $h$ is right $\CP^{< \infty }(\La)$-approximation, it follows that $h_1$ is minimal right $\CP^{< \infty }(\La)$-approximation. By Lemma \ref{Applic}, $h_1$ induces $\Ext_{\La}^1(X,\overline{\rm{Coker}}(\rm{Mimo}(f)) )\simeq \Ext_{\La}^1(X, \tau(M))$ $$ (\dagger) \ \  \ \ \ \ \Ext_{\La}^1(-,\overline{\rm{Coker}}(\rm{Mimo}(f)) )\mid_{\CX}\simeq \Ext_{\La}^1(-, \tau(M))\mid_{\CX}.$$

On the other hand, by Theorem \ref{AR-duality} we have
$$\Ext_{\La}^1(X, \tau(M))\simeq D\underline{\Hom}_{\La}(M, X)\simeq \Ext_{\La}^1(X, \tau_{\CP^{<\infty}(\La)}(M))$$
for any $X$ in $\CP^{<\infty}(\La).$ This implies 
$$(\dagger \dagger) \ \ \ \ \ \Ext^1(-, \overline{\rm{Coker}}(\rm{Mimo}(f)))\mid_{\CP^{<\infty}(\La)} \simeq \Ext_{\La}^1(-, \tau_{\CP^{<\infty}(\La)}(M))\mid_{\CP^{<\infty}(\La).}$$

By combing $(\dagger)$ and $(\dagger \dagger)$, we have $\Phi((\overline{\overline{\rm{Coker}}(\rm{Mimo}(f))}, -))\simeq \Phi((\overline{\tau_{\CP^{<\infty}(\La)}(M)}, -))$, see \ref{Firstconstuction} for definition of $\Phi$. The equivalence $\Phi$ implies $(\overline{\overline{\rm{Coker}}(\rm{Mimo}(f))}, -)\simeq (\overline{\tau_{\CP^{<\infty}(\La)}(M)}, -).$ But it means $\tau_{\CP^{<\infty}}(M)\simeq \overline{\rm{Coker}}(\rm{Mimo}(f))$ in $\overline{\CP^{<\infty}(\La)}$.
\end{proof}
  We close this section with some results concerning subcategories in Example \ref{Examplefirst} $(ii).$
\begin{lemma}\label{lemma4.5}
Assume $\CX \subseteq \mmod \La$ holding conditions of Proposition \ref{DualityDec} and take  $X \in \CX$. Then
\begin{itemize}
\item[(1)] If  $\Omega^{-1}\tau (X) \in \CX$ and whose projective cover in $\mmod \La$ is injective, then  $\Phi_1((\underline{X}, -))= (-,\underline{\Omega^{-1}\tau(X)}).$
\item[(2)] If projective cover of $X$ is injective and also $\tau^{-1}\Omega(X)$ belongs to $\CX$, then $\Phi^{-1}_1((-, \underline{X}))=(\underline{\rm{Tr}D\Omega(X)}, -).$
\end{itemize}  
\end{lemma}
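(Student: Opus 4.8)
The plan is to exploit the identification $\Phi_1\simeq D$ established in Proposition \ref{DualityDec}, so that evaluating $\Phi_1$ or its quasi-inverse on a representable functor reduces to applying the termwise duality $D=\Hom_k(-,E)$ and then recognising the outcome, via the Yoneda lemma, as another representable functor. Concretely, $\Phi_1((\underline{X},-))$ is the functor $Y\mapsto D\underline{\Hom}_\La(X,Y)$ in $\mmod\underline{\CX}$, while $\Phi_1^{-1}((-,\underline{X}))$ is the functor $Y\mapsto D\underline{\Hom}_\La(Y,X)$ in $\underline{\CX}\mbox{-}\rm{mod}$. Thus for $(1)$ it suffices to produce a natural isomorphism $D\underline{\Hom}_\La(X,Y)\simeq \underline{\Hom}_\La(Y,\Omega^{-1}\tau(X))$, and for $(2)$ a natural isomorphism $D\underline{\Hom}_\La(Y,X)\simeq \underline{\Hom}_\La(\rm{Tr}D\Omega(X),Y)$, in each case for $Y\in\CX$; the membership hypotheses $\Omega^{-1}\tau(X)\in\CX$, resp. $\rm{Tr}D\Omega(X)\in\CX$, are exactly what make the right-hand sides genuine representable objects.

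The common engine is the following observation: if $W$ is a module whose projective cover $P(W)$ is injective, then applying $\Hom_\La(Y,-)$ to the projective-cover sequence $0\rt\Omega W\rt P(W)\rt W\rt 0$ and using $\Ext^1_\La(Y,P(W))=0$ identifies the connecting homomorphism as a natural isomorphism $\underline{\Hom}_\La(Y,W)\simeq\Ext^1_\La(Y,\Omega W)$, since the kernel of the surjection $\Hom_\La(Y,W)\rt\Ext^1_\La(Y,\Omega W)$ is precisely the set of morphisms factoring through $P(W)$, which in turn is the set of all morphisms factoring through some projective.

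For $(2)$ this engine applies directly with $W=X$, giving $D\underline{\Hom}_\La(Y,X)\simeq D\Ext^1_\La(Y,\Omega X)$; then the classical Auslander--Reiten formula recalled in the Introduction, which after dualising reads $D\Ext^1_\La(Y,N)\simeq\underline{\Hom}_\La(\rm{Tr}D(N),Y)$, applied with $N=\Omega X$ yields $\underline{\Hom}_\La(\rm{Tr}D\Omega(X),Y)$, all maps being natural in $Y$, which finishes $(2)$ after invoking Yoneda. For $(1)$ I would instead start from $D\underline{\Hom}_\La(X,Y)\simeq\Ext^1_\La(Y,\tau(X))$, set $W=\Omega^{-1}\tau(X)$, and compare the cosyzygy sequence $0\rt\tau(X)\rt I\rt W\rt 0$, with $I$ an injective envelope of $\tau(X)$, against the projective-cover sequence $0\rt\Omega W\rt P(W)\rt W\rt 0$; a pullback (Schanuel-type) argument gives $\Omega W\oplus I\simeq\tau(X)\oplus P(W)$, and applying $\Ext^1_\La(Y,-)$ and discarding the injective summands $I$ and $P(W)$ — where the hypothesis that $P(W)$ is injective is exactly what kills $\Ext^1_\La(Y,P(W))$ — produces a natural isomorphism $\Ext^1_\La(Y,\tau(X))\simeq\Ext^1_\La(Y,\Omega W)$. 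Splicing this with the engine $\Ext^1_\La(Y,\Omega W)\simeq\underline{\Hom}_\La(Y,W)$ gives $(1)$.

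The routine parts are the two long exact sequences and the pullback splitting. The point that needs care is naturality in $Y$ of each isomorphism, since only an isomorphism of functors lets the Yoneda lemma identify the representing objects in $\underline{\CX}$ and $\overline{\CX}$; relatedly, one must track the syzygy and cosyzygy modules only up to projective and injective summands and verify that the two standing hypotheses (membership in $\CX$ after applying the relevant translate, and injectivity of the appropriate projective cover) are precisely what reconcile the modulo-projective and modulo-injective stable structures, so that both the engine and the Schanuel comparison apply simultaneously. I expect this bookkeeping, rather than any single computation, to be the main obstacle.
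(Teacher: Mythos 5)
Your argument is correct, but it takes a genuinely different route from the paper's. The paper proves the lemma by unwinding the definition of $\Phi_1$ as the composite $\Upsilon\circ\Psi'\circ(\Psi'_1)^{-1}$ of Construction \ref{Constructionsec}: it identifies $(\underline{X},-)$ with $\Tor_1(-,{\rm Tr}(X))\vert_{\CX}$, writes down the explicit injective copresentation in $\CX\mbox{-}{\rm mod}$ induced by the short exact sequence $0\rt \tau(X)\rt Q\rt \Omega^{-1}\tau(X)\rt 0$ (where $Q$ is the injective projective cover), and pushes this four-term sequence through the three functors one at a time; the classical Auslander--Reiten formula is never invoked there. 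You instead use Proposition \ref{DualityDec} to replace $\Phi_1$ by the termwise duality $D$, which reduces the lemma to a purely module-theoretic identification of two representable functors, carried out via classical AR duality together with the dimension shift $\underline{\Hom}_{\La}(Y,W)\simeq\Ext_{\La}^1(Y,\Omega W)$ --- and you correctly locate the injectivity of the relevant projective cover as exactly what makes the connecting homomorphism surjective and its kernel the whole projectively trivial part. This is not circular, since Proposition \ref{DualityDec} is established independently of the lemma and the AR formula is an external classical input. As for what each approach buys: yours is shorter and makes the role of each hypothesis transparent; the paper's stays entirely inside the functor-category machinery (exhibiting along the way the explicit injective copresentation of $(\underline{X},-)$ in $\CX\mbox{-}{\rm mod}$, which is of independent use) and implicitly yields the sharper fact that under the hypothesis of $(1)$ one has $\Omega(\Omega^{-1}\tau(X))\simeq\tau(X)$ and $Q\simeq I(\tau(X))$ on the nose, whereas your Schanuel comparison identifies them only up to injective direct summands --- which is all that is needed after applying $\Ext_{\La}^1(Y,-)$.
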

\begin{proof}
Assume that  $Q$ is the projective cover of   $\Omega^{-1}\tau (X)$ in $\mmod \La$, that is, injective by our assumption. Then we obtain the following exact sequence
$$0 \rt \Tor_1(-, \rm{Tr}(X)) \rt -\otimes_{\La} D\Omega^{-1}D \rm{Tr}(X) \rt -\otimes_{\La}D(Q) \rt -\otimes_{\La}\rm{Tr}(X) \rt 0$$
in $\mmod (\rm{mod}\mbox{-} \Lambda)$. We use  $\Tor_1(-, M)\simeq \underline{\Hom}_{\La}(\rm{Tr}M, -)$, for each  left module $M $,  so
$$0 \rt \underline{\Hom}_{\La}(X, -) \rt -\otimes_{\La} D\Omega^{-1}D \rm{Tr}(X) \rt -\otimes_{\La}D(Q) \rt -\otimes_{\La}\rm{Tr}(X) \rt 0,$$
note that $\rm{Tr}\rm{Tr}(X)\simeq X.$  Then the  restriction of above exact sequence on $\CX$ gives us a similar one in $\mmod \CX.$ Consider $X \in \CX,$ such that  $\Omega^{-1}\tau(X)$  also belongs to $\CX$, then just by following the definition of the  duality $\Phi_1$, we have
$$\Phi_1((\underline{X}, -))=\Upsilon \circ \Psi' \circ (\Psi'_1)^{-1}((\underline{X}, -))=\Upsilon \circ \Psi'(D(Q) \rt \rm{Tr}(X))=\Upsilon((\overline{D\Omega^{-1}\tau(X)}, -))=(-,\underline{\Omega^{-1}\tau(X)}).$$  The same argument can work for $(2)$. 
\end{proof}

\begin{remark}\label{remark234}
Let $\CX$ be the same as Lemma \ref{lemma4.5}. If $X \in \CX$ holds the assumptions Lemma \ref{lemma4.5}$(1)$, then since   $\Phi_1$ is a duality, for such $X$, $(-, \underline{\Omega^{-1}\tau(X)})$ becomes an injective-projective object in $\mmod \underline{\CX}.$ Similarly, if  $ X$ satisfies Lemma \ref{lemma4.5}(2), then $\Phi^{-1}_1((-, \underline{X}))=(\underline{\rm{Tr}D\Omega(X)}, -)$ is an injective-projective object in $\underline{\CX} \mbox{-} \rm{mod}$.
In particular, if $\CX=\mmod \La$ for a self-injective $\La$, then automatically $\CX$ holds   all conditions we need  and then in view of these facts one can deduce that $\mmod (\underline{\rm{mod}}\mbox{-} \La)$ is a Frobenius category . Although, it can be seen by  a general fact, that is,  $\mmod \mathcal{T}$ of a triangulated category $\mathcal{T}$ is a Frobenius category. The fact is shown due to Freyd and Verdier independently in \cite{F2} and  \cite{V}.  Moreover, by Proposition \ref{DualityDec}, for each $X \in \mmod \La$
$$\Phi_1((\underline{X}, -)) \simeq D((\underline{X}, -))=(-, \underline{\Omega^{-1}\tau(X)}).$$
It means that $\Omega^{-1}\tau$ is a serre duality for $\underline{\rm{mod} } \mbox{-} \La $, see also \cite{ES} for another proof. Recall from \cite{RV} that a Serre functor for a $k$-linear triangulated category $\CT$ is  an auto-equivalence $S: \CT \rt \CT$ together with an isomorphism $D\Hom_{\CT}(X, -)\simeq \Hom_{\mathcal{T}}(-, S(X))$ for each $X \in \CT$, with $D$ is the duality $\Hom_{k}(-, E).$ It is interesting to note that here we didn't use Auslander-Reiten duality  to prove the existence serre duality as \cite{RV}.  

\end{remark}

Recall that a triangulated subcategory $\mathcal{S}$ of triangulated category $\mathcal{T}$ is thick provided that all idempotents split. 
\begin{proposition}
Let $\Lambda$ be a self-injective algebra and $\mathcal{Y}$ a thick subcategory of $\underline{\rm{mod}} \mbox{-}\Lambda$. If the primage $\pi^{-1}(\CY)$, see Example \ref{Examplefirst}, is functorially finite, e.g. $\La$ being  of finite type, then
\begin{itemize}
\item[(1)] $\CY$ has serre functor;
\item[(2)] For each $Y \in \CY$ there exists morphism $f:S_{\CY} \rt \Omega^{-1}\tau(Y)$ in $\underline{\rm{mod}} \mbox{-} \Lambda$ such that for every object $X \in \CY$ and each morphism $g:X \rt \Omega^{-1}\tau(Y)$ in $\underline{\rm{mod}} \mbox{-} \Lambda$  there exists unique morphism $h:X \rt \Omega^{-1}\tau(Y)$ with $g=fg.$ 
\end{itemize} 
\end{proposition}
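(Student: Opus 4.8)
The plan is to realize $\CY$ as the stable category of the subcategory $\CX := \pi^{-1}(\CY)$ of $\mmod \La$, and then transport the relative Auslander--Reiten duality of Theorem \ref{AR-duality} into a Serre duality, exploiting that $\La$ is self-injective so that $\underline{\CX}=\overline{\CX}=\CY$ and every module is Gorenstein projective. First I would verify that $\CX=\pi^{-1}(\CY)$ meets the hypotheses of Theorem \ref{AR-duality}. By Example \ref{Examplefirst}$(ii)$ the preimage $\CX$ contains all projective-injective modules and is closed under kernels of epimorphisms and cokernels of monomorphisms; since $\CY$ is thick its idempotents split, so $\CX$ is closed under direct summands. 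Combined with the standing hypothesis that $\CX$ is functorially finite, the Corollary following Lemma \ref{extension} shows $\CX$ is functorially finite, resolving and coresolving. As $\La$ is self-injective, projectives and injectives coincide, whence $\underline{\CX}=\overline{\CX}=\CY$ as full triangulated subcategories of $\underline{\rm{mod}}\mbox{-}\La$.

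For $(1)$, I would apply Theorem \ref{AR-duality} to obtain the relative translation $\tau_{\CX}\colon \CY \rt \CY$, an equivalence, with $D\underline{\Hom}_{\La}(X,Y)\simeq \Ext_{\La}^1(Y,\tau_{\CX}(X))$ naturally in both variables. Because every module is Gorenstein projective, $\Ext_{\La}^1(Y,Z)\simeq \underline{\Hom}_{\La}(Y,\Omega^{-1}Z)$; substituting yields
\[D\underline{\Hom}_{\La}(X,Y)\simeq \underline{\Hom}_{\La}(Y,\Omega^{-1}\tau_{\CX}(X)).\]
Setting $S_{\CY}:=\Omega^{-1}\tau_{\CX}$, which carries $\CY$ into $\CY$ since $\tau_{\CX}(X)\in\CY$ and $\CY$ is closed under the shift $\Omega^{-1}$, one gets an equivalence satisfying the defining isomorphism of a Serre functor (its compatibility with the triangulated structure being automatic for Serre functors on triangulated categories). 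Since $\CY$ is full, $\underline{\Hom}_{\La}$ restricted to $\CY$ is exactly $\Hom_{\CY}$, so this is a genuine Serre duality on $\CY$, proving $(1)$.

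For $(2)$, I would combine this relative Serre duality with the ambient one. By Remark \ref{remark234} the Serre functor of $\underline{\rm{mod}}\mbox{-}\La$ is $\Omega^{-1}\tau$, so for every $X\in\CY$ there are natural isomorphisms
\[\underline{\Hom}_{\La}(X,S_{\CY}(Y))\simeq D\underline{\Hom}_{\La}(Y,X)\simeq \underline{\Hom}_{\La}(X,\Omega^{-1}\tau(Y)),\]
the first from $(1)$ and the second from the ambient Serre duality. This isomorphism, natural in $X\in\CY$, is by the Yoneda lemma induced by post-composition with a morphism $f\colon S_{\CY}(Y)\rt \Omega^{-1}\tau(Y)$; unwinding the Yoneda bijection says precisely that every $g\colon X\rt \Omega^{-1}\tau(Y)$ with $X\in\CY$ factors uniquely as $g=f\circ h$ through some $h\colon X\rt S_{\CY}(Y)$, which is the assertion of $(2)$. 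In particular $f$ exhibits $S_{\CY}(Y)$ as the minimal right $\CY$-approximation of $\Omega^{-1}\tau(Y)$.

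The main obstacle is conceptual rather than computational: the ambient Serre functor $\Omega^{-1}\tau$ need not preserve the thick subcategory $\CY$, so one cannot expect $\CY$ to inherit a Serre functor by mere restriction. The crux is that functorial finiteness of $\pi^{-1}(\CY)$ is exactly what licenses Theorem \ref{AR-duality} and thereby produces the \emph{relative} translation $\tau_{\CX}$ intrinsic to $\CY$; part $(2)$ then measures, via the approximation morphism $f$, the discrepancy between $S_{\CY}$ and the restriction of the ambient Serre functor. The delicate points to check are the direct-summand closure of $\CX$ (supplied by thickness of $\CY$) and the naturality of the isomorphisms needed to apply the Yoneda lemma.
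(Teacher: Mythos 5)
Your argument is correct, and part $(2)$ coincides with the paper's proof (compare the two Serre-type isomorphisms and apply Yoneda to extract $f$ as the image of the identity). For part $(1)$, however, you take a genuinely different route. The paper does not invoke Theorem \ref{AR-duality} at all: it observes that $\pi^{-1}(\CY)$ satisfies the hypotheses of Theorem \ref{StableDualizing}, so that $\CY=\underline{\pi^{-1}(\CY)}$ is a dualizing $k$-variety, and then cites Chen's generalized Serre duality \cite[Corollary 2.6]{C1} to get the existence of $S_{\CY}$ abstractly. You instead verify the hypotheses of Theorem \ref{AR-duality} for $\CX=\pi^{-1}(\CY)$ (using thickness of $\CY$ to get closure under direct summands, Example \ref{Examplefirst}$(ii)$ for the resolving/coresolving conditions, and self-injectivity to identify $\underline{\CX}=\overline{\CX}=\CY$), and then rotate the relative Auslander--Reiten formula by the natural isomorphism $\Ext^1_{\La}(Y,Z)\simeq\underline{\Hom}_{\La}(Y,\Omega^{-1}Z)$. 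This costs you a little more checking up front, but it buys an explicit description $S_{\CY}=\Omega^{-1}\tau_{\CX}$ of the Serre functor, parallel to the ambient formula $S=\Omega^{-1}\tau$ of Remark \ref{remark234}, whereas the paper's argument only yields existence. Your closing remark that $f$ exhibits $S_{\CY}(Y)$ as a right $\CY$-approximation of $\Omega^{-1}\tau(Y)$ is a correct and worthwhile gloss on what part $(2)$ is really saying (note the statement itself has typos: $f$ should go from $S_{\CY}(Y)$, $h$ should land in $S_{\CY}(Y)$, and the factorization should read $g=fh$; you interpreted these correctly).
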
 
\begin{proof}
As we observed in Example \ref{Examplefirst}, $\pi^{-1}(\CY)$ satisfies the assumptions of Proposition \ref{DualityDec}, in particular the assumptions of Theorem \ref{StableDualizing}.  
But $\underline{\pi^{-1}(\CY)}=\CY$ is a dualizing $k$-variety by  Theorem \ref{StableDualizing}, and due to \cite[Corollary 2.6]{C1} has a serre duality, denote it by $S_{\CY}.$ Hence, for each $Y \in \CY$, by definition of serre duality, we have
$$(1) \ \  \ \ \  D\underline{\Hom}_{\La} (Y, -) \simeq \underline{\Hom}_{\La} (-, S_{\CY}(Y)).$$

On the other hand, as we discussed in Remark \ref{remark234}, we have
$$(2) \ \  \ \ \  D\underline{\Hom}_{\La} (Y, -) \simeq \underline{\Hom}_{\La} (-, \Omega^{-1}\tau (Y))\vert_{\CY}.$$

 So combining $(1)$ and $(2)$ imply that $\Theta: \underline{\Hom}_{\La}(-, S_{\CY}(Y))\simeq \underline{\Hom}_{\La}(-, \Omega^{-1}\tau(Y))\vert_{\CY}.$ It is enough to take $f$ to be the image of $1_{S_{\CY}(Y)}$ under isomorphism $$\Theta_{S_{\CY}(Y)}:\underline{\Hom}_{\La}(S_{\CY}(Y), S_{\CY}(Y))\simeq \underline{\Hom}_{\La}(S_{\CY}(Y), \Omega^{-1}\tau(Y))\vert_{\CY}.$$
\end{proof}

\section{ The Auslander-Reiten duality  for  Gorenstein projective modules  }
In this section we shall concentrate on the subcategory of Gorenstein projective modules. Throughout this section we assume that the subcategory $\rm{Gprj} \mbox{-} \La$ of Gorenstein projective modules  is contravariantly finite in $\mmod \La.$ For example for virtually Gorenstein algebras it happens, see \ref{gproj}. 

Recall that for an $\La$-module $M$, its syzygy $\Omega(M)$ is the kernel of its projective  cover $P(M) \st{p_{M}} \rt M.$ This gives rise to the syzygy functor $\Omega:\underline{\rm{mod}} \mbox{-} \La \rt \underline{\rm{mod}}\mbox{-} \La$. Then the syzygy functor $\Omega$ restricts to an auto-equivalence $\Omega: \underline{\rm{Gprj}} \mbox{-}\La \rt \underline{\rm{Gprj}} \mbox{-} \La$. As we mentioned in \ref{gproj}, the stable category $\underline{\rm{Gprj} }\mbox{-} \La$ becomes a triangulated category such that the translation functor is given by a
quasi-inverse of $\Omega$, and that the triangles are induced by the  short exact sequences in $\Gprj \La.$

Let us  first recall some basic definitions for almost split sequence, see \cite{ARS}  and \cite{AS} for more details. Let $\mathcal{C}$ be a subcategory of $\mmod \La$ closed under direct summands and extensions. A morphism $f:A \rt B$ in $\mathcal{C}$ is a left almost split morphism in $\mathcal{C}$ if it isn't a split monomorphism and every morphism $j:A \rt X$ in $\mathcal{C}$ that is not a split monomorphism factors through $f.$

  A right almost split morphism in $\mathcal{C}$ is defined by duality. An exact sequence $0 \rt A  \st{f} \rt B \st{g} \rt C \rt 0$ in $\mathcal{C}$ is said to be an almost split sequence in $\mathcal{C}$ if $f$ is a left almost split morphism in $\mathcal{C}$ and $g$ is a right almost split morphism in $\mathcal{C}.$ The indecomposable module $A$ is uniquely determined by $C$ and  denoted by $\sigma_{\mathcal{C}}(C).$ In the case that $\mathcal{C}$ is functorial finite, then $\sigma_{\mathcal{C}}(A)$ exists for non-$\rm{Ext}$-projective modules in $\mathcal{C}$. We say a module $C \in \mathcal{C}$ is  $\rm{Ext}$-projective in $\mathcal{C},$ if $\Ext_{\La}^1(C, X)=0$ for any $X$ in $\mathcal{C}.$  Note that for $\mathcal{C}=\rm{Gprj} \mbox{-} \La$, $\rm{Ext}$-projective modules are exactly projective modules in $\mmod \La.$  
  
In the sequel, for a triangulated category $\mathcal{T}$ being Hom-finite $k$-linear Krull-Schmidt, we have a triangulated version of the concept of almost split sequence in $\CT$ as follows:
Following \cite{H} a triangle $  A \st{f} \rt B \st{g}\rt C \st{h} \rt  A[1]$ in $\CT$ is called an Auslander-Reiten
triangle if the following conditions are satisfied:
\begin{itemize}
\item [$(\rm{AR}_1)$] $A$ and $C$ are indecomposable.
\item[$(\rm{AR}_2)$] $h\neq 0$.
\item [$(\rm{AR}_3) $]If $D$ is indecomposable, then for every non-isomorphism $ t : D \rt C $ we have
$ht = 0.$
\end{itemize}
\begin{theorem}\label{tranGprj}
Assume that $\rm{Gprj}\mbox{-} \La$ is contravariently finite subcategory in $\mmod \La.$ Then $\rm{Gprj}\mbox{-} \La$ has Auslander-Reiten duality,  i.e. there exist an equivalence functor $\tau_{\mathcal{G}}:\underline{\rm{Gprj}}\mbox{-} \La \rt \underline{\rm{Gprj}}\mbox{-} \La$  such that for any Gorenstein projective modules $G$ and $G'$, we have the following natural isomorphism in both variables;
\[D\underline{\Hom}_{\La}(G, G')\simeq \Ext_{\La}^1(G', \tau_{\mathcal{G}}(G)).\]

\end{theorem}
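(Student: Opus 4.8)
The plan is to bypass the functorial machinery behind Theorem \ref{AR-duality} (which is unavailable here, since $\Gprj\La$ is resolving but not coresolving and is only assumed contravariantly finite) and instead to bootstrap the \emph{classical} Auslander--Reiten duality on $\mmod\La$ together with Lemma \ref{Applic}, exactly as in the proof of Theorem \ref{1-Gortrans}. First I would recall the original formula $D\underline{\Hom}_\La(G,G')\simeq \Ext^1_\La(G',\tau_{\La}G)$ with $\tau_{\La}=D\rm{Tr}$, which is natural in both variables. Its only defect for our purposes is that $\tau_{\La}G$ need not be Gorenstein projective, so the task is to replace it by a Gorenstein projective module without disturbing $\Ext^1_\La(G',-)$ for $G'\in\Gprj\La$.

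Since $\Gprj\La$ is resolving and, by hypothesis, contravariantly finite, every $Y\in\mmod\La$ admits a minimal right $\Gprj\La$-approximation $a_Y\colon X_Y\to Y$ with $X_Y\in\Gprj\La$. I would set $\tau_{\mathcal{G}}(G):=X_{\tau_{\La}G}$, the source of the minimal right $\Gprj\La$-approximation of $\tau_{\La}G$. Applying Lemma \ref{Applic} with $i=1$ to the resolving, contravariantly finite subcategory $\Gprj\La$ shows that $a_{\tau_{\La}G}$ induces, naturally in $G'\in\Gprj\La$,
\[\Ext^1_\La(G',\tau_{\mathcal{G}}(G))\;\simeq\;\Ext^1_\La(G',\tau_{\La}G).\]
Composing with the classical formula gives the desired
\[D\underline{\Hom}_\La(G,G')\;\simeq\;\Ext^1_\La(G',\tau_{\mathcal{G}}(G)),\]
natural in $G'$. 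Naturality in $G$ follows because $\tau_{\La}$ is a functor on $\underline{\rm{mod}}\mbox{-}\La$ and minimal right approximations are unique up to a projectively trivial isomorphism, so $G\mapsto\tau_{\mathcal{G}}(G)$ is well defined as a functor on $\underline{\rm{Gprj}}\mbox{-}\La$.

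The hard part will be showing that $\tau_{\mathcal{G}}\colon\underline{\rm{Gprj}}\mbox{-}\La\to\underline{\rm{Gprj}}\mbox{-}\La$ is an \emph{equivalence}, and not merely a natural assignment on objects. Here I would exploit that $\Gprj\La$ is a Frobenius category (subsection \ref{gproj}), so $\underline{\rm{Gprj}}\mbox{-}\La$ is triangulated with shift $\Omega^{-1}$ and, crucially, its projectives and injectives coincide, giving $\overline{\rm{Gprj}}\mbox{-}\La=\underline{\rm{Gprj}}\mbox{-}\La$; this is exactly why the translate lands in $\underline{\rm{Gprj}}\mbox{-}\La$ itself rather than in a separate $\overline{\rm{Gprj}}\mbox{-}\La$. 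Because $\Gprj\La$ is closed under extensions, $\Ext^1_\La$ restricted to $\Gprj\La$ agrees with the Yoneda $\Ext$ of the exact category $\Gprj\La$, whence $\Ext^1_\La(G',H)\simeq\underline{\Hom}_\La(G',\Omega^{-1}H)$ for $G',H\in\Gprj\La$. Substituting this into the isomorphism above rewrites it as $D\underline{\Hom}_\La(G,-)\simeq\underline{\Hom}_\La(-,\Omega^{-1}\tau_{\mathcal{G}}(G))$ on $\underline{\rm{Gprj}}\mbox{-}\La$, i.e. it exhibits $\Omega^{-1}\tau_{\mathcal{G}}$ as a right Serre functor on the Hom-finite Krull--Schmidt triangulated category $\underline{\rm{Gprj}}\mbox{-}\La$.

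To promote this to an honest auto-equivalence I would use the symmetry of the situation: running the dual Auslander--Reiten formula on $\overline{\rm{Gprj}}\mbox{-}\La=\underline{\rm{Gprj}}\mbox{-}\La$ produces a left Serre functor as well, and a functor that is simultaneously a left and a right Serre functor on a Hom-finite Krull--Schmidt triangulated category is automatically an auto-equivalence (Reiten--Van den Bergh \cite{RV}). Alternatively, one may first record that $\underline{\rm{Gprj}}\mbox{-}\La$ is a dualizing $k$-variety (either as a consequence of the right Serre functor just constructed, or along the lines of Theorem \ref{StableDualizing}) and then read off genuine Serre duality from \cite[Corollary 2.6]{C1}. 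Since $\Omega$ is an auto-equivalence of $\underline{\rm{Gprj}}\mbox{-}\La$ and $\Omega^{-1}\tau_{\mathcal{G}}$ is now seen to be one, $\tau_{\mathcal{G}}=\Omega\circ(\Omega^{-1}\tau_{\mathcal{G}})$ is an equivalence, and the displayed isomorphism is natural in both variables by construction. This would complete the proof.
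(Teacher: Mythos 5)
Your construction of the duality formula itself is correct and genuinely different from the paper's. The paper first manufactures the Serre functor $S_{\mathcal{G}}$ on $\underline{\rm{Gprj}}\mbox{-}\La$ abstractly --- via Theorem \ref{StableDualizing} (the stable category is a dualizing $k$-variety, using \cite[Corollary 0.3]{KS} to upgrade contravariant to functorial finiteness) together with \cite[Corollary 2.6]{C1} --- then sets $\tau_{\mathcal{G}}=S_{\mathcal{G}}\circ\Omega$ and only afterwards identifies $\tau_{\mathcal{G}}(G)$ with the left end of an almost split sequence. You instead build the translate concretely as the minimal right $\Gprj\La$-approximation of $\tau_{\La}(G)$ and obtain the formula from the classical Auslander--Reiten duality plus Lemma \ref{Applic}; this mirrors the paper's own proof of Theorem \ref{1-Gortrans} and has the added value of an explicit recipe for $\tau_{\mathcal{G}}$, consistent with Corollary \ref{Galmost} and \cite{AS}. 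Two caveats on this half: you should fix $\tau_{\La}G=D{\rm Tr}(G)$ via a \emph{minimal} projective presentation so that it has no injective summands, since otherwise $X_{\tau_{\La}G}$ changes by non-projective Gorenstein projective summands; and functoriality of $G\mapsto X_{\tau_{\La}G}$ does not follow merely from uniqueness of minimal approximations (a stable map $G\to G''$ determines $\tau_{\La}G\to\tau_{\La}G''$ only modulo injectives, and its lift along the approximations only modulo maps through the approximation kernels). It is your later representability observation $D\underline{\Hom}_{\La}(G,-)\simeq\underline{\Hom}_{\La}(-,\Omega^{-1}\tau_{\mathcal{G}}(G))$ together with Yoneda that actually makes $\tau_{\mathcal{G}}$ a well-defined functor and the isomorphism natural in $G$; lead with that.

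The one genuine gap is your first route to the equivalence claim. Everywhere-definedness of the right Serre functor gives full faithfulness, but density requires the \emph{left} Serre functor to be everywhere defined, i.e. every $D\underline{\Hom}_{\La}(-,H)$ to be corepresentable. ``Running the dual Auslander--Reiten formula'' does not deliver this: the dual of Lemma \ref{Applic} needs a covariantly finite \emph{coresolving} subcategory, and $\Gprj\La$ is not coresolving --- this asymmetry is exactly why Theorem \ref{AR-duality} is unavailable here in the first place. Nor is the dualizing-variety property ``a consequence of the right Serre functor just constructed'': a right Serre functor only shows that $D$ carries representables of one handedness to representables, not the converse. So you are forced onto your alternative route --- $\underline{\rm{Gprj}}\mbox{-}\La$ is a dualizing $k$-variety by Theorem \ref{StableDualizing}, whence \cite[Corollary 2.6]{C1} yields honest Serre duality --- which is precisely the paper's argument. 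With that substitution the proof closes.
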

\begin{proof}
Since $\Gprj \La$ is a resolving subcategory of $\mmod \La$ then \cite[Corollary 0.3]{KS} follows that it is functorially finite. Due to \cite[Corollary 2.6]{C1} and Theorem \ref{StableDualizing}, $\underline{\rm{Gprj}}\mbox{-} \La$ has a serre duality, see Remark \ref{remark234} for definition of Serre duality, and we denote it by $S_{\mathcal{G}}$.
Therefore, for every $G$ and $G'$ in $\Gprj \La$
$$ (\dagger) \ \ \ \ \ D\underline{\Hom}_{\La}(G, G')\simeq \underline{\Hom}_{\La}(G', S_{\mathcal{G}}(G)).$$

Denote by $\tau_{\mathcal{G}}$  the equivalence $S_{\mathcal{G}} \circ \Omega$.
Let $G_1$ be an indecomposable non-projective Gorenstein projective module. Then by applying \cite[Theorem 1.1]{AS}, there exists an almost split sequence $0 \rt G_1' \rt G'' \rt G_1 \rt 0$, with $G_1'$ to be necessary an indecomposable  non- projective Gorenstein projective. It is straightforward to check that the induced triangle
 $$G'_1 \rt G'' \rt G_1 \rt \Omega^{-1}(G'),$$
 in $\underline{\rm{Gprj}} \mbox{-} \La$ is an Auslander-Reiten triangle. Moreover by \cite[Proposition I.2.3.]{RV},  $\tau_{\mathcal{G}}(G_1)\simeq G_1'$ in $\Gprj \La.$ In view of $(\dagger)$ and using the natural isomorphism $\Ext_{\La}^1(G', -)\vert_{\rm{Gprj} \mbox{-}\La}\simeq \underline{\Hom}_{\La}(\Omega(G'), -)$ for each $G'$ in $\rm{Gprj} \mbox{-} \La$, we can obtain the following relative Auslander-Reiten duality
 \[D\underline{\Hom}_{\La}(G_1, G')\simeq \Ext_{\La}^1(G', \tau_{\mathcal{G}}(G_1)).\]
for each  $G'$ in $\rm{Gprj} \mbox{-} \La.$ By the additivity of functor $\tau_{\mathcal{G}}$, and Krull-schmit property of $\rm{Gprj} \mbox{-} \La$, we can extend above isomorphism for each $G \in \rm{Gprj} \mbox{-} \La$. Thus, we get the desired isomorphism.
\end{proof}

By the proof of the above theorem, we have the following consequence.
\begin{corollary}\label{Galmost}
	Let $G$ be an indecomposable non-projective Gorensetin projective $\La$-module. Then $\sigma_{\rm{Gprj} \mbox{-} \La}(G)\simeq \tau_{\mathcal{G}}(G)$ in $\underline{\rm{Gprj}}\mbox{-} \La.$
\end{corollary}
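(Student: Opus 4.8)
The plan is to read the statement off directly from the argument already carried out in the proof of Theorem \ref{tranGprj}, the point being that the module denoted $G_1'$ there is exactly $\sigma_{\rm{Gprj}\mbox{-}\La}(G)$. First I would recall the hypotheses in force: since $\Gprj \La$ is assumed contravariantly finite (hence resolving and contravariantly finite), it is functorially finite by \cite[Corollary 0.3]{KS}, so by \cite[Theorem 1.1]{AS} every indecomposable non-projective $G \in \Gprj \La$ admits an almost split sequence $0 \rt A \rt B \rt G \rt 0$ in $\Gprj \La$. By the very definition of $\sigma$ recalled just before Theorem \ref{tranGprj}, the uniquely determined indecomposable left-hand term is $A = \sigma_{\rm{Gprj}\mbox{-}\La}(G)$.

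Next I would pass to the triangulated stable category $\underline{\rm{Gprj}}\mbox{-}\La$, whose translation functor is a quasi-inverse of $\Omega$. The short exact sequence above induces a triangle $A \rt B \rt G \rt \Omega^{-1}(A)$, and, as used in the proof of Theorem \ref{tranGprj}, this triangle is an Auslander-Reiten triangle in the sense of Happel, i.e. it satisfies $(\mathrm{AR}_1)$--$(\mathrm{AR}_3)$. The key input is then \cite[Proposition I.2.3]{RV}: in a $\mathrm{Hom}$-finite Krull--Schmidt triangulated category possessing a Serre functor $S_{\mathcal{G}}$, the left-hand term of the Auslander-Reiten triangle ending at $G$ is canonically isomorphic to $S_{\mathcal{G}}(\Omega(G))$. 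Since the shift $[1]$ of the triangulated structure is $\Omega^{-1}$, the Reiten--Van den Bergh translate $S_{\mathcal{G}}\circ[-1]$ coincides with $S_{\mathcal{G}}\circ\Omega = \tau_{\mathcal{G}}$, the functor defined in the theorem; thus the left term equals $\tau_{\mathcal{G}}(G)$.

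Combining the two descriptions of the left term yields $A \simeq \tau_{\mathcal{G}}(G)$ in $\underline{\rm{Gprj}}\mbox{-}\La$, which is precisely the assertion. I do not expect a genuine obstacle here, as the corollary simply repackages what the proof of Theorem \ref{tranGprj} already establishes; the only points requiring a little care are bookkeeping matters, namely that the isomorphism is claimed in the stable category $\underline{\rm{Gprj}}\mbox{-}\La$ rather than in $\Gprj \La$ on the nose (the ambient almost split sequence pins down $A$ only up to the projective summands that are annihilated in passing to the stable category), and the matching of conventions whereby $[1] = \Omega^{-1}$ so that $S_{\mathcal{G}}[-1]$ agrees with $S_{\mathcal{G}}\circ\Omega$.
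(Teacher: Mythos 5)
Your proposal is correct and follows essentially the same route as the paper: the corollary is read off from the proof of Theorem \ref{tranGprj}, where the almost split sequence $0 \rt G_1' \rt G'' \rt G_1 \rt 0$ supplied by \cite[Theorem 1.1]{AS} has left term $\sigma_{\rm{Gprj}\mbox{-}\La}(G_1)$ by definition, and the induced Auslander--Reiten triangle together with \cite[Proposition I.2.3]{RV} identifies that same term with $\tau_{\mathcal{G}}(G_1)=S_{\mathcal{G}}\circ\Omega(G_1)$. Your additional remarks on the stable-category bookkeeping and the convention $[1]=\Omega^{-1}$ are accurate and consistent with the paper's (terser) justification, which only notes that functorial finiteness guarantees the existence of $\sigma_{\rm{Gprj}\mbox{-}\La}(G)$.
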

\begin{proof}
	Note that as $\rm{Gprj} \mbox{-} \La$ is functorial finite by our assumption, then $\sigma_{\rm{Gprj} \mbox{-} \La}(G)$ always exists for any indecomposable non-projective Gorensetin projective module.  
\end{proof}
A relative version of Auslander's conjecture as explained in \ref{AusCOnjsec}  also holds for the subcategory of Gorenstein projective modules.
\begin{proposition}\label{AUSGPrj}
Let $G$ be in $\rm{Gprj} \mbox{-} \La$. 
\begin{itemize}
	\item[$(i)$]If $F$ is a direct summand of $\Ext_{\La}^1(G,-)\vert_{\rm{Gprj} \mbox{-} \La}$, then there exists $G' \in \rm{Gprj} \mbox{-} \La $ such that $F\simeq \Ext_{\La}^1(G', -)\vert_{\rm{Gprj} \mbox{-} \La}.$

\item[$(ii)$] If $F$ is a direct summand of $\Ext_{\La}^1(-, G)\vert_{\rm{Gprj} \mbox{-} \La}$, then there exists $G' \in \rm{Gprj} \mbox{-} \La $ such that $F\simeq \Ext_{\La}^1(-, G')\vert_{\rm{Gprj} \mbox{-} \La}.$
\end{itemize}
\end{proposition}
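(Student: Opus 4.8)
The plan is to carry out, inside the triangulated category $\underline{\CG}:=\underline{\rm{Gprj}} \mbox{-} \La$, the same mechanism that drove the proof of Theorem \ref{AusConj}, but without invoking the dualities $\Phi,\Phi'$ and hence without the hypotheses of Proposition \ref{enoughinj}. The key input is that $\rm{Gprj} \mbox{-} \La$ is, by our standing assumption together with \cite[Corollary 0.3]{KS}, a functorially finite subcategory of $\mmod \La$, so it is a Krull--Schmidt Frobenius category whose stable category $\underline{\CG}$ is again Krull--Schmidt; in particular $\underline{\CG}$ has split idempotents, whence the projective objects of $\mmod \underline{\CG}$ and of $\underline{\CG} \mbox{-} \rm{mod}$ are precisely the representable functors, as recalled in the proof of Theorem \ref{AusConj}.

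First I would record the two natural identifications furnished by the triangulated (Frobenius) structure of $\underline{\CG}$ described in \ref{gproj}, namely, for every $G \in \rm{Gprj} \mbox{-} \La$,
\[\Ext_{\La}^1(G, -)\vert_{\rm{Gprj} \mbox{-} \La}\simeq \underline{\Hom}_{\La}(\Omega G, -)\vert_{\rm{Gprj} \mbox{-} \La}=(\underline{\Omega G}, -)\]
and dually
\[\Ext_{\La}^1(-, G)\vert_{\rm{Gprj} \mbox{-} \La}\simeq \underline{\Hom}_{\La}(-, \Omega^{-1} G)\vert_{\rm{Gprj} \mbox{-} \La}=(-, \underline{\Omega^{-1} G}),\]
where $\Omega^{-1}$ is the quasi-inverse of the syzygy auto-equivalence of $\underline{\CG}$; the first of these is precisely the isomorphism already used at the end of the proof of Theorem \ref{tranGprj}. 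Thus $\Ext_{\La}^1(G, -)\vert_{\rm{Gprj} \mbox{-} \La}$ is a representable object of $\underline{\CG} \mbox{-} \rm{mod}$ and $\Ext_{\La}^1(-, G)\vert_{\rm{Gprj} \mbox{-} \La}$ is a representable object of $\mmod \underline{\CG}$; each is therefore projective in its respective functor category.

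For part $(ii)$, the summand $F$ of the projective (representable) functor $(-, \underline{\Omega^{-1} G})$ is again projective in $\mmod \underline{\CG}$, hence, by split idempotents, representable: $F\simeq (-, \underline{H})$ for some $H \in \rm{Gprj} \mbox{-} \La$. Reading the second identification backwards with $\Omega H$ in place of $\Omega^{-1} G$ and using $\Omega^{-1}\Omega H\simeq H$ in $\underline{\CG}$ gives $F\simeq (-, \underline{H})\simeq \Ext_{\La}^1(-, \Omega H)\vert_{\rm{Gprj} \mbox{-} \La}$, so that $G'=\Omega H\in \rm{Gprj} \mbox{-} \La$ works (here I use that $\rm{Gprj} \mbox{-} \La$ is closed under syzygies). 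Part $(i)$ is entirely dual: $F$ is a summand of the projective functor $(\underline{\Omega G}, -)$ in $\underline{\CG} \mbox{-} \rm{mod}$, hence representable, $F\simeq (\underline{K}, -)$ for some $K$, and then $F\simeq \Ext_{\La}^1(\Omega^{-1} K, -)\vert_{\rm{Gprj} \mbox{-} \La}$ with $G'=\Omega^{-1} K$.

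The one genuinely load-bearing step, and the place where the hypotheses enter, is the verification that $\underline{\CG}$ has split idempotents: this rests on $\rm{Gprj} \mbox{-} \La$ being functorially finite, hence a Krull--Schmidt exact (indeed Frobenius) category, so that its stable category inherits the Krull--Schmidt property. Once that is in place the remaining steps are routine: the naturality of the two $\Ext$-to-stable-$\Hom$ isomorphisms is standard dimension shifting, and the passage from a direct summand of a representable functor to its representability is exactly the representable-equals-projective principle of Theorem \ref{AusConj}.
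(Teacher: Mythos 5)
Your proof is correct, but it takes a genuinely different route from the paper's. The paper deduces from Theorem \ref{tranGprj} that $\Ext_{\La}^1(G,-)\vert_{\Gprj \La}\simeq D\underline{\Hom}_{\La}(-,\tau_{\mathcal{G}}(G))$, i.e.\ that this functor is an \emph{injective} object of the functor category, and then invokes the classification of injectives over the dualizing $k$-variety $\underline{\rm{Gprj}}\mbox{-}\La$ (Theorem \ref{StableDualizing}) to conclude that a direct summand is again of that form, finally unwinding through $\tau_{\mathcal{G}}^{-1}$ to recover the $\Ext$-shape. You instead identify the same functor as a \emph{projective} (representable) object $(\underline{\Omega G},-)$ by dimension shifting along the Frobenius structure, and reduce to the fact that a direct summand of a representable is representable once idempotents split in $\underline{\rm{Gprj}}\mbox{-}\La$. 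Your route buys economy and generality: it bypasses Theorem \ref{tranGprj}, the Serre-duality input and the dualizing-variety machinery entirely, and --- contrary to what you assert --- the Krull--Schmidt property of $\underline{\rm{Gprj}}\mbox{-}\La$ does not rest on functorial finiteness (closure of $\Gprj \La$ under direct summands inside the Krull--Schmidt category $\mmod \La$ already gives local endomorphism rings, which pass to the stable quotient), so your argument in fact proves the proposition without the section's standing contravariant-finiteness hypothesis. What the paper's route buys is the explicit appearance of $\tau_{\mathcal{G}}$ in the answer, $F\simeq \Ext_{\La}^1(\tau_{\mathcal{G}}^{-1}(G'),-)\vert_{\Gprj \La}$, and consistency with its theme that these $\Ext$-functors are exactly the injectives of the stable functor category. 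One point worth making explicit in your write-up: both of your $\Ext$-to-stable-$\Hom$ isomorphisms rely on $\Ext_{\La}^1(G,Q)=0=\Ext_{\La}^1(X,Q)$ for $G,X$ Gorenstein projective and $Q$ projective, and the second isomorphism $\Ext_{\La}^1(-,G)\simeq \underline{\Hom}_{\La}(-,\Omega^{-1}G)$ is only valid after restriction to $\Gprj \La$, exactly as you state it.
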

\begin{proof}
By the above Theorem we can deduce $\Ext_{\La}^1(G,-)\vert_{\rm{Gprj} \mbox{-} \La}\simeq D\underline{\Hom}_{\La}(-,\tau_{\mathcal{G}}(G))$. On the other hand, since $\underline{\rm{Gprj}} \mbox{-} \La$ is a dualizing $R$-variety, see Theorem \ref{StableDualizing},  and has split idempotents, then any injective functor in $\mmod \underline{\rm{Gprj}} \mbox{-} \La$ is in the form of $D\underline{\Hom}_{\La}(-,A)$ for some $A $ in $\rm{Gprj} \mbox{-} \La.$ So, the isomorphism implies that $\Ext_{\La}^1(G,-)$ is an injective functor in $\mmod \underline{\rm{Gprj}} \mbox{-} \La$ and also $F$ as a direct summand of it. Hence, there is $G' \in \rm{Gprj} \mbox{-} \La $ such that $F\simeq D\underline{\Hom}_{\La}(-,G')$, which by the above theorem $F\simeq \Ext_{\La}^1(\tau^{-1}_{\mathcal{G}}(G'),-)\vert_{\rm{Gprj} \mbox{-} \La}.$
\end{proof}
In the following we provide some examples in which the relative Ausalnder-Reiten translation in $\rm{Gprj} \mbox{-} \La$ for a given Gorenstein projective module are computed. 
\begin{example}\label{SgTran}
\begin{itemize}
\item[$(i)$] Let $\La$ be a self-injective algebra. Then the subcategory $\mathcal{S}(\La) $, because of Lemma \ref{GPCH}, is mapped to $\rm{Gprj}\mbox{-}  T_2(\La)$ by the equivalence $H(\La) \simeq \mmod T_2(\La)$. For an non-projective object $A \st{f}\rt B$ in $\mathcal{S}(\La)$, whose relative Auslander-Reiten translation in $\mathcal{S}(\La)$ is given by
$$\sigma_{\mathcal{S}(\La)}\simeq\tau_{\mathcal{S}(\La)}\simeq \rm{Mimo}(\tau_{\La}(B \rt \rm{Coker}(f))),$$
see \cite{RS3} for more details. In particular, by Theorem \ref{tranGprj}, for any object $C \st{g} \rt D$ in $\mathcal{S}(\La)$, we have 
 $$D\Ext_{H(\La)}^1(f, g)\simeq \underline{\Hom}_{H(\La)}(g, \rm{Mimo}(\tau_{\La}(B \rt \rm{Coker}(f))) ).$$ By the additivity of functors involved in above isomorphism, we can extend the isomorphism for all object $A \st{f}\rt B$ in $\mathcal{S}(\La)$, not to be necasserly indecomposable.
\item[$(ii)$] Let $\La$ be a $\rm{d}$-Gorenstein algebra. It was proved by Auslander and Reiten in \cite{AR4}, for indecomposable non-projective Gorenstein projective $C$, $\sigma_{\rm{Gprj} \mbox{-} \La}(C)\simeq \Omega^{d}D\Omega^{d}\rm{Tr}(C),$ in $\underline{\rm{Gprj}}\mbox{-} \La.$  In similar argument of the first example, we deduce the following  for every $G$ and $G'$ in $\Gprj \La$ 
 \[D\underline{\Hom}_{\La}(G, G')\simeq \Ext_{\La}^1(G',\Omega^{d}D\Omega^{d}\rm{Tr}(G) ).\]

\end{itemize}
\end{example}

 Let us recall that  an abelian category $\CA $ is said to be semisimple if any short exact sequence splits. It is equivalent to say that any object is projective provided $\CA$ with enough projectives.
\begin{lemma}\label{MinimalINJ} Let $G$ be in $\rm{Gprj} \mbox{-} \La$ and non-projective. Then the following exact sequence
\[0 \rt (-, \Omega(G)) \rt (-, P) \rt (-, G) \rt (-, \underline{G}) \rt 0\]
is a minimal projective resolution of $(-, \underline{G})$ in $\mmod (\rm{Gprj}\mbox{-}\La).$
\end{lemma}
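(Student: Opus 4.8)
The plan is to build the resolution directly from the projective cover of $G$ and then verify minimality through the radical criterion in the functor category. First I would start from the short exact sequence $0 \to \Omega(G) \xrightarrow{i} P \xrightarrow{p} G \to 0$, where $p\colon P = P(G) \to G$ is the projective cover defining $\Omega(G)$; since $\rm{Gprj}\mbox{-}\La$ is resolving, all three terms lie in $\rm{Gprj}\mbox{-}\La$. Applying the Yoneda functor $(-,?)$ restricted to $\rm{Gprj}\mbox{-}\La$ and using that $(X,-)$ is left exact gives exactness of $0 \to (-,\Omega(G)) \to (-,P) \to (-,G)$. To identify the cokernel of $(-,p)$ I would observe that, for $X \in \rm{Gprj}\mbox{-}\La$, a morphism $X \to G$ lies in the image of $(X,P) \to (X,G)$ exactly when it factors through a projective: one inclusion is clear, and conversely any factorization $X \to Q \to G$ through a projective $Q$ lifts along the epimorphism $p$ (by projectivity of $Q$) to a factorization through $P$. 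Hence $\Coker(-,p) \cong (-,\underline{G})$, and the four-term sequence is exact. As each of $(-,\Omega(G))$, $(-,P)$, $(-,G)$ is representable, it is projective in $\mmod(\rm{Gprj}\mbox{-}\La)$ (which is abelian because $\rm{Gprj}\mbox{-}\La$ is contravariantly finite, hence admits weak kernels), so this is a projective resolution of $(-,\underline{G})$.

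For minimality I would use the standard criterion that a projective resolution over a Krull--Schmidt category is minimal precisely when every differential has image inside the radical of its target; via Yoneda, $\im(-,g) \subseteq \rad(-,Y)$ is equivalent to the morphism $g$ belonging to the radical $\rad(\cdot,Y)$ of $\rm{Gprj}\mbox{-}\La$, because the radical is a two-sided ideal. Thus it suffices to check that $i$ and $p$ are radical morphisms. The inclusion $i\colon \Omega(G) \hookrightarrow P$ is radical because $\Omega(G) = \Ker p$ is a superfluous submodule of the projective cover $P$, hence $\im i \subseteq \rad P$, which forces $i$ to have no isomorphism component. For $p\colon P \to G$, being the projective cover of the non-projective module $G$ (which I take to have no nonzero projective direct summand, the relevant reduced case) means $p$ is not a split epimorphism on any summand and so is radical; equivalently, $(-,G) \to (-,\underline{G})$ is a projective cover. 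Combining the two gives $\im(-,i) \subseteq \rad(-,P)$ and $\im(-,p) \subseteq \rad(-,G)$, which is exactly minimality.

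The main obstacle is the minimality step, and within it the radicality of $p$: this is where the hypothesis on projective summands genuinely intervenes, since if $G$ had a projective summand $Q$ the resolution would contain a trivial direct summand $(-,Q) \xrightarrow{\mathrm{id}} (-,Q)$ and fail to be minimal. I would therefore either incorporate the no-projective-summand reduction explicitly or restrict attention to indecomposable $G$, for which radicality of $p$ is automatic. The only other point needing care is the precise dictionary between the radical of $\mmod(\rm{Gprj}\mbox{-}\La)$ on representable functors and the radical of $\rm{Gprj}\mbox{-}\La$, which follows from Yoneda together with the Krull--Schmidt property.
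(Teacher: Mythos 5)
Your proof is correct, and in substance it supplies the details that the paper's two--sentence argument leaves implicit; the one structural difference lies in how minimality is certified. The paper reduces to indecomposable $G$, invokes the fact that $\Omega(G)$ is then again indecomposable, and assembles the general case from finite direct sums of minimal resolutions, whereas you verify the radical criterion directly: $i\colon\Omega(G)\hookrightarrow P$ is radical because $\Omega(G)=\Ker p\subseteq\rad P$, and $p\colon P\rt G$ is radical because no indecomposable projective summand of $P$ can map isomorphically onto an indecomposable summand of $G$ once $G$ has no projective direct summands. Your route avoids having to know that $\Omega$ preserves indecomposability of non-projective Gorenstein projectives (true, but it needs $\Ext^1_{\La}(G,Q)=0$ for $Q$ projective to rule out projective summands of $\Omega(G)$), and it makes visible exactly where the hypothesis on $G$ enters. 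The caveat you flag is genuine: as literally stated the lemma fails when $G$ is non-projective but has a projective direct summand $Q$, since the resolution then contains the split summand $(-,Q)\st{\mathrm{id}}\rt(-,Q)$; the paper's reduction to indecomposables implicitly restricts to $G$ without projective summands, and your proof should record the same reduction. Your identification of $\Coker(-,p)$ with $(-,\underline{G})$ and the Yoneda dictionary between $\rad(-,Y)$ and the radical of $\Gprj\La$ are both handled correctly.
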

\begin{proof}
In case that $G$ is indecomposable the $\Omega(G)$ is as well. By this fact one can deduce for the case of $G$ being indecomposable.  Then extending this to any non-projective Gorenstein projective module, by considering this fact that finite direct sums of minimal projective resolutions are again minimal.
\end{proof}
\begin{lemma}\label{indeCOM}
Let $G$ be a non-projective indecomposable module in $\Gprj \La$. Then representation 
$\Omega(G)\hookrightarrow P$ is an indecomposable Gorenstein projective object in $T_2(\Lambda)$, where $P$ is the projective cover of $G$.
\end{lemma}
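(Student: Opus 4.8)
The plan is to prove the two assertions in turn: that $(\Omega(G)\hookrightarrow P)$ is Gorenstein projective over $T_2(\La)$, and that it is indecomposable. For the first I would simply invoke Lemma \ref{GPCH}. Write $\iota:\Omega(G)\hookrightarrow P$ for the inclusion of the kernel of the projective cover $P\st{p}\rt G$. Then $\iota$ is a monomorphism, its source $\Omega(G)$ is Gorenstein projective (being a syzygy of the Gorenstein projective module $G$), its target $P$ is projective hence Gorenstein projective, and $\Coker(\iota)=G$ is Gorenstein projective. All hypotheses of Lemma \ref{GPCH} are met, so $(\Omega(G)\st{\iota}\rt P)$ is a Gorenstein projective object of $\mmod T_2(\La)$ under the identification $\mmod T_2(\La)\simeq H(\La)$.

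For indecomposability I would show that $\End_{T_2(\La)}(\Omega(G)\st{\iota}\rt P)$ is local. Consider the ring homomorphism $\phi$ to $\End_{\La}(G)$ sending an endomorphism $(\sigma_1,\sigma_2)$ (so $\sigma_2\iota=\iota\sigma_1$) to the map $\sigma_3$ it induces on the cokernel $G$. Since $G$ is indecomposable over an artin algebra, $\End_{\La}(G)$ is local; the crux is then to prove that $\phi$ both preserves and reflects isomorphisms. Indeed, if that holds, an element of $\End_{T_2(\La)}(\Omega(G)\st{\iota}\rt P)$ is a unit exactly when its image avoids the maximal ideal $\fm$ of $\End_{\La}(G)$, so the non-units form the ideal $\phi^{-1}(\fm)$ and the endomorphism ring is local, whence $(\Omega(G)\st{\iota}\rt P)$ is indecomposable. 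That $(\sigma_1,\sigma_2)$ invertible forces $\sigma_3$ invertible is immediate from the functoriality of the cokernel.

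The real work, and the step I expect to be the main obstacle, is the converse: if $\sigma_3$ is an automorphism of $G$, then $(\sigma_1,\sigma_2)$ is an automorphism. Here I would use crucially that $P$ is the \emph{projective cover}, so $\iota(\Omega(G))=\ker p$ is superfluous, i.e. $\iota(\Omega(G))\subseteq\rad P$. From $p\sigma_2=\sigma_3 p$ with $\sigma_3 p$ surjective one gets $\sigma_2(P)+\ker p=P$, and superfluousness yields $\sigma_2(P)=P$; as $P$ has finite length over the artin algebra, $\sigma_2$ is an automorphism. Since $\sigma_3$ is invertible, $\sigma_2$ carries $\ker p$ into $\ker p$, so $\sigma_1=\sigma_2|_{\Omega(G)}$ is an injective, hence bijective, endomorphism of $\Omega(G)$, giving that $(\sigma_1,\sigma_2)$ is invertible. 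This is precisely where the hypothesis that $P$ is the projective cover (rather than an arbitrary projective surjecting onto $G$) cannot be dropped: otherwise a trivial summand could split off.

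As an alternative that fits the functorial machinery above, one can instead regard $(\Omega(G)\st{\iota}\rt P)$ as an object of $S_{\Gprj\La}(\mmod\La)$ whose image under $\Psi$ of Construction \ref{FirstCoonstr} is, by Lemma \ref{MinimalINJ}, the indecomposable projective functor $(-,\underline{G})$ of $\mmod\underline{\Gprj\La}$ (indecomposable because $\underline{\End}_{\La}(G)$ is a nonzero quotient of the local ring $\End_{\La}(G)$, $G$ being non-projective). Since $\Psi$ is objective with kernel $\mathcal{V}$ by Theorem \ref{Thefirst}, any nontrivial decomposition would split off a summand of $(\Omega(G)\st{\iota}\rt P)$ lying in $\mathcal{V}$, that is, a sum of copies of $(Y\st{\rm{Id}}\rt Y)$ and $(0\rt Z)$. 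The projective-cover condition $\iota(\Omega(G))\subseteq\rad P$ rules these out — a nonzero projective summand of $P$ cannot lie inside $\rad P$, and a summand $(0\rt Z)$ would force the indecomposable non-projective $G$ to acquire the projective summand $Z$ — so no such summand is nonzero and indecomposability follows.
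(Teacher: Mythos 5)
Your proof is correct, and its core argument --- showing $\End_{T_2(\La)}(\Omega(G)\hookrightarrow P)$ is local by using the projective cover property to transfer invertibility to the local ring $\End_{\La}(G)$ --- is essentially the paper's own proof, only written out in full detail where the paper gives a two-line sketch (the paper even phrases the reduction in terms of $\phi_2\in\End_{\La}(P)$ rather than the induced map on $G$, so your version with $\sigma_3$ is arguably the cleaner reading). The explicit Gorenstein-projectivity check via Lemma \ref{GPCH} and the alternative functorial argument via $\Psi$ are correct additions that the paper's proof omits, but they do not change the substance.
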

\begin{proof}
Let $\phi=(\phi_1, \phi_2)$ is an endomorphism of $\Omega(G)\hookrightarrow P$. By the property of a  projective cover, we can conclude that $\phi$ is a non-isomorphism if and only if $\phi_2$ so is. In view of this fact and  being $\rm{End}(G)$  local, it follows that the set of all non-isomorphism endomorphism makes a two-sided ideal, or equivalently, $\rm{End}_{\La}(\Omega(G)\hookrightarrow P)$ is local. It implies that $\Omega(G)\hookrightarrow P$ is indecomposable.
\end{proof}
\begin{theorem}\label{CHArecterzationSemi}
 Consider the  following statements:
\begin{itemize}
\item[$(i)$] $ \mmod (\underline{\rm{Gprj}} \mbox{-} \Lambda)$ is semisimple;

\item[$(ii)$] Any indecomposable Gorenstein projective object of $H(\Lambda)$ is isomorphic to one of the following indecomposable Gorenstein projective representations:
\begin{itemize}
\item[$(a)$] $G \st{\rm{Id}} \rt G$, for some indecomposable  module $G \in \Gprj \La$,
\item[$(b)$] $0 \rt G$, for some indecomposable  module $G \in \Gprj \La$,
\item[$(c)$] $\Omega(G) \hookrightarrow P$, where $P$ is the projective cover of some indecomposable non-projective $G \in \Gprj \La;$
\end{itemize}
\item[$(iii)$]  The relative Auslander-Reiten translation  for indecomposable non-projective Gorenstein projective $G $ in the subcategory $\Gprj \Lambda$ is isomorphic to $\Omega(G),$ i.e. $\tau_{\mathcal{G}}(G)\simeq \Omega (G).$
\end{itemize}
Then $ (i)\Rightarrow (ii) \Rightarrow (iii).$ Moreover, if $\La$ is of finite $\rm{CM}$-type, then all the statements are equivalent.
\end{theorem}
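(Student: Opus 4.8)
The plan is to read everything through the equivalence $\Psi\colon S_{\CX}(\CA)/\mathcal{V}\simeq \mmod\underline{\CX}$ of Theorem \ref{Thefirst}, applied to $\CX=\Gprj\La$ and $\CA=\mmod\La$. By Lemma \ref{GPCH} the indecomposable Gorenstein projective objects of $H(\La)$ are exactly the indecomposable objects of $S_{\CX}(\CA)$, and $S_{\CX}(\CA)$ is Krull--Schmidt, being a summand-closed subcategory of $\mmod T_2(\La)$. The subcategory $\mathcal{V}=\ker\Psi$ is, up to summands, generated by the objects of types $(a)$ and $(b)$, while by Lemma \ref{MinimalINJ} a type-$(c)$ object $N_G=(\Omega(G)\hookrightarrow P)$, with $P$ the projective cover of $G$, satisfies $\Psi(N_G)=(-,\underline{G})$, the indecomposable projective functor of $\mmod\underline{\CX}$; moreover $N_G$ is indecomposable by Lemma \ref{indeCOM}. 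The governing observation is that $\mmod\underline{\CX}$ is semisimple iff every object is projective iff every indecomposable object is some $(-,\underline{G})$, so $(i)$ is really a statement about $S_{\CX}(\CA)$ read through $\Psi$.

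For $(i)\Rightarrow(ii)$ I would take an indecomposable $M\in S_{\CX}(\CA)$. If $M\in\mathcal{V}$ it is of type $(a)$ or $(b)$. Otherwise $\Psi(M)\neq 0$; since $\End(M)$ is local and the image of a local ring in a quotient is local, $\Psi(M)$ is indecomposable, hence simple by semisimplicity, so $\Psi(M)\cong(-,\underline{G})$ for some indecomposable non-projective $G$. Then $M$ and $N_G$ have isomorphic images under $\Psi$, so they are isomorphic in $S_{\CX}(\CA)/\mathcal{V}$. As $\mathcal{V}$ is closed under summands and both $M$ and $N_G$ are indecomposable with no summand in $\mathcal{V}$, the standard cancellation argument applies: if $\beta\alpha\equiv 1_M$ modulo maps factoring through $\mathcal{V}$, then $M$ is a direct summand of $N_G\oplus V$ for some $V\in\mathcal{V}$, whence $M\cong N_G$ by Krull--Schmidt. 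Thus $M$ is of type $(c)$.

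For $(ii)\Rightarrow(iii)$ I would first note $(ii)\Rightarrow(i)$: by density and additivity of $\Psi$ every object of $\mmod\underline{\CX}$ is $\bigoplus_i\Psi(M_i)$ over indecomposable $M_i$, and $(ii)$ forces each $\Psi(M_i)$ to be $0$ or some $(-,\underline{G})$, so every object is projective and $\mmod\underline{\CX}$ is semisimple. Now fix an indecomposable non-projective $G$. Semisimplicity makes $(-,\underline{G})$ simple, i.e. $\rad(-,\underline{G})=0$, so every radical morphism into $G$ in $\underline{\Gprj}\La$ vanishes; equivalently the minimal right almost split map into $G$ has source $0$. Since $\underline{\Gprj}\La$ is a dualizing variety (Theorem \ref{StableDualizing}) it has a Serre functor \cite{C1} and hence Auslander--Reiten triangles \cite{RV}; the AR triangle ending at $G$ therefore degenerates to $\tau_{\mathcal{G}}(G)\rt 0\rt G\rt\Omega^{-1}\tau_{\mathcal{G}}(G)$, forcing $G\cong\Omega^{-1}\tau_{\mathcal{G}}(G)$, that is $\tau_{\mathcal{G}}(G)\cong\Omega(G)$, where $\tau_{\mathcal{G}}$ is the translate of Theorem \ref{tranGprj} and Corollary \ref{Galmost} and I have used $\Ext_{\La}^1(G',-)\vert_{\Gprj\La}\simeq\underline{\Hom}_{\La}(\Omega(G'),-)$.

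It remains, under finite CM-type, to prove $(iii)\Rightarrow(i)$ so as to close the equivalences. Comparing the Auslander--Reiten duality of Theorem \ref{tranGprj} with the Serre duality $D\underline{\Hom}_{\La}(G,G')\simeq\underline{\Hom}_{\La}(G',S_{\mathcal{G}}(G))$ identifies $\tau_{\mathcal{G}}=\Omega\circ S_{\mathcal{G}}$, so $(iii)$ is equivalent to the Serre functor $S_{\mathcal{G}}$ acting as the identity on objects; in particular Serre duality makes the matrix $\bigl(\dim_k\underline{\Hom}_{\La}(G_i,G_j)\bigr)$ over the finitely many indecomposables $G_1,\dots,G_n$ symmetric. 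Finite CM-type gives $\mmod\underline{\CX}\simeq\mmod\Gamma$ for the finite-dimensional stable Auslander algebra $\Gamma$, which is Frobenius by the Freyd--Verdier theorem \cite{F2,V}, and $S_{\mathcal{G}}\cong\mathrm{id}$ translates into the Nakayama functor of $\Gamma$ being trivial on the indecomposable projectives. The main obstacle is exactly this last step: upgrading the weak, object-level condition $S_{\mathcal{G}}\cong\mathrm{id}$ to the strong conclusion $\rad\underline{\CX}=0$ (equivalently, that \emph{every} AR triangle degenerates as in the previous paragraph). I expect to carry this out by a counting argument with the symmetric Hom-matrix together with the mesh relations of the finite AR quiver, finiteness being essential, since $(iii)$ is genuinely weaker than $(i)$ without it; the aim is to show that a single nonzero irreducible map would violate the symmetry and positivity forced by $S_{\mathcal{G}}\cong\mathrm{id}$.
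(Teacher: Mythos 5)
Your $(i)\Rightarrow(ii)$ is correct and is in substance the paper's own argument read through the equivalence $\Psi$ of Theorem \ref{Thefirst}: the paper compares the projective resolution of $\Psi(f)$ induced by $0\rt G\rt G'\rt G''\rt 0$ with the minimal one from Lemma \ref{MinimalINJ} and applies Yoneda, while you run the same comparison as a Krull--Schmidt cancellation in $S_{\CX}(\CA)/\mathcal{V}$; these are interchangeable. Your $(ii)\Rightarrow(iii)$ takes a genuinely different route: the paper decomposes the middle term of the almost split sequence ending at $G$ directly using $(ii)$, whereas you first prove $(ii)\Rightarrow(i)$ unconditionally (by density and additivity of $\Psi$) and then let the Auslander--Reiten triangle in $\underline{\rm{Gprj}}\mbox{-}\La$ degenerate. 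Both are valid, and your detour has the small merit of establishing $(ii)\Rightarrow(i)$ without any CM-finiteness hypothesis, which the paper does not claim.

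The genuine gap is $(iii)\Rightarrow(i)$, which you explicitly leave open (``I expect to carry this out\dots''). Moreover, the strategy you sketch cannot succeed as stated. The only consequence of $S_{\CG}\cong\mathrm{id}$ on objects that you propose to exploit is the symmetry of the matrix $\bigl(\dim_k\underline{\Hom}_{\La}(G_i,G_j)\bigr)$, but symmetry of stable Hom-dimensions does not by itself preclude a nonzero stable radical: a single indecomposable with local stable endomorphism ring $k[\epsilon]/(\epsilon^2)$ (a symmetric local algebra, hence entirely compatible with a Serre functor fixing the object) produces a symmetric $1\times 1$ matrix with entry $2$. So no counting argument on the Hom-matrix and mesh relations alone can force $\rad\,\underline{\CX}=0$; you must use $(iii)$ as an identification of extension classes, not merely of objects.

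Concretely, the paper's route is the following. By $(iii)$ and Corollary \ref{Galmost} the almost split sequence ending at an indecomposable non-projective $G$ has the form $0\rt\Omega(G)\rt E\rt G\rt 0$, and one identifies it with the syzygy sequence $0\rt\Omega(G)\rt P\rt G\rt 0$, where $P$ is the projective cover. Granting this, $\rad(-,G)={\rm im}\bigl((-,P)\rt(-,G)\bigr)$, i.e.\ $(-,\underline{G})$ is the simple top of $(-,G)$; since under finite CM-type $\mmod(\underline{\rm{Gprj}}\mbox{-}\La)$ is $\mmod\Gamma$ for an artin algebra $\Gamma$ whose indecomposable projectives are exactly these $(-,\underline{G})$, all indecomposable projectives are simple and $\Gamma$ is semisimple. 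The step you would still have to supply --- and it is where the actual work lies, whether in your Frobenius/Nakayama picture or in the paper's --- is that the class of the projective cover sequence in $\Ext_{\La}^1(G,\Omega(G))\simeq D\underline{\End}_{\La}(G)$ coincides, up to automorphisms of the end terms, with the class of the almost split sequence. That is not a dimension count.
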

\begin{proof}
$(i) \Longrightarrow (ii)$ The representations of types $(a)$ and $(b)$ are indecomposable, not difficult to prove, but of type  $(c)$ follows by Lemma \ref{indeCOM}. Conversely, assume $G \st{f} \rt G'$  is an indecomposable Gorenstein projective indecomposable representation on quiver $\mathbb{A}_2$. This representation induces the short exact sequence $0 \rt G \st{f} \rt G' \rt G'' \rt 0$ in $\Gprj \La.$  This short exact sequence gives us the following functor $F$
$$0 \rt (-, G) \rt (-, G') \rt (-, G'') \rt F \rt 0$$ in $\mmod (\underline{\rm{Gprj }} \mbox{-}\La)$. Since $ \mmod (\underline{\rm {Gprj}} \mbox{-} \Lambda)$ is semisimple, then $F$ is a projective functor, namely, $F\simeq (-, \underline{A})$ for some $A$ in $\Gprj \La.$ By Lemma \ref{MinimalINJ} we know that $0 \rt (-, \Omega(A)) \rt (-, P) \rt (-, A) \rt (-, \underline{A}) \rt 0$
is a minimal projective resolution of $ (-, \underline{A}) $. Hence, we have  two projective resolution for $F$ in $\mmod (\Gprj \La)$, by the property of  minimality, the minimal  one is a direct summand of the first one. Now  the Yoneda Lemma  says us the short exact sequence $0 \rt G \rt G' \rt G'' \rt 0$ is a direct sums of
$0 \rt \Omega(A) \rt P \rt A \rt 0 $, $0 \rt 0 \rt B \st{\rm{Id}} \rt B \rt 0$ and $0 \rt C \st{\rm{Id}} \rt C \rt 0 \rt 0$ for some $ A, B$ and $C$ in $\Gprj \La.$ But this implies that the indecomposable representation $f$ must be one of three types.\\
$(ii) \Longrightarrow (iii) $ In view of the   observation provided in the first of this section  we can conclude that there is  an almost split sequence $0 \rt \tau_{\mathcal{G}}(G) \rt G' \rt G \rt 0 $ in $\Gprj \La.$ Considering $\tau_{\mathcal{G}}(G) \rt G'$
 as a representation over quiver $\mathbb{A}_2$, then by $(ii)$ it  can be decomposed to the indecomposable representations in types of $(a), (b)$ and $(c)$. According to these facts that the end terms in an almost split sequence  are indecomposable and being non-split, then $\tau_{\mathcal{G}}(G) \rt G'$  must be an indecomposable representation in the form of $(c).$ It is our desired result.\\
Assume $\La$ is of  $\rm{CM}$-finite type, then $\mmod (\Gprj \La)$ is in fact equivalent to the category of finitely generated modules over some artin algebra. Now we prove  $(iii) \Longrightarrow (i)$ to complete our task.
Let $(-, \underline{G})$ be a indecomposable projective functor in $\mmod (\underline{\rm{Gprj}} \mbox{-}\La)$. Without loss of generality  we can assume that $G$ is an indecomposable non-projective  Gorenstein projective module. Then by $(iii)$ there is  an almost split $\eta: 0 \rt \Omega(G) \rt P \rt G \rt 0$ which gives us
$$0 \rt (- \Omega(G)) \rt (-, P) \rt (-, G) \rt (-, \underline{G}) \rt 0.$$ By the properties of an almost split sequence it is routine to check that $(-, \underline{G})$ is a simple functor in $\mmod (\Gprj \La)$, and also in $\mmod (\underline{\rm{Gprj}}\mbox{-} \La)$. Therefore, any projective functor is a simple functor, and hence $\mmod (\underline{\rm{Gprj}}\mbox{-} \La)$  is semisimple.
\end{proof}
\begin{corollary}
	Let $\La$ be an algebra satisfying the equivalent condition in above theorem. Then for any Gorenstein projective modules $G$ and $G'$, we have the following;
	\[D\underline{\Hom}_{\La}(G, G')\simeq \Ext_{\La}^1(G', \Omega(G)).\]
	 
\end{corollary}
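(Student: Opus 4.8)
The plan is to obtain the corollary as a direct specialization of Theorem~\ref{tranGprj}, using the explicit form of the relative Auslander-Reiten translation supplied by Theorem~\ref{CHArecterzationSemi}. The hypothesis that $\La$ satisfies the equivalent condition of that theorem gives in particular statement $(iii)$, namely $\tau_{\mathcal{G}}(G)\simeq\Omega(G)$ for every indecomposable non-projective Gorenstein projective module $G$ (the chain $(i)\Rightarrow(ii)\Rightarrow(iii)$ holds unconditionally, so no finiteness is needed for this direction). On the other hand Theorem~\ref{tranGprj} yields, for all $G,G'\in\Gprj\La$, the isomorphism $D\underline{\Hom}_{\La}(G,G')\simeq\Ext_{\La}^1(G',\tau_{\mathcal{G}}(G))$. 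First I would combine these on an indecomposable non-projective $G$: substituting $\tau_{\mathcal{G}}(G)\simeq\Omega(G)$ produces $D\underline{\Hom}_{\La}(G,G')\simeq\Ext_{\La}^1(G',\Omega(G))$ for every $G'$.

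It then remains only to pass from indecomposable non-projective $G$ to an arbitrary Gorenstein projective module. When $G$ is projective both sides vanish: any morphism out of a projective factors through a projective, so $\underline{\Hom}_{\La}(G,-)=0$, while $\Omega(G)=0$ in $\underline{\Gprj}\La$ forces $\Ext_{\La}^1(G',\Omega(G))=0$. Hence the isomorphism holds on every indecomposable. Since $\Gprj\La$ is Krull-Schmidt and the assignments $G\mapsto D\underline{\Hom}_{\La}(G,G')$, $G\mapsto\Omega(G)$ and $G\mapsto\Ext_{\La}^1(G',\Omega(G))$ are all additive, I would conclude for arbitrary $G$ by decomposing into indecomposables and taking the direct sum of the isomorphisms obtained on the summands.

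The corollary as stated asserts the isomorphism for each pair $(G,G')$, so the argument above is already sufficient; moreover naturality in the variable $G'$ is inherited verbatim from Theorem~\ref{tranGprj}, since for fixed $G$ the object $\Omega(G)$ is fixed and both $D\underline{\Hom}_{\La}(G,-)$ and $\Ext_{\La}^1(-,\Omega(G))$ are functorial in $G'$. The one point I expect to require genuine care is upgrading the object-level identification $\tau_{\mathcal{G}}(G)\simeq\Omega(G)$ to a natural isomorphism of functors, needed if one wants naturality in $G$ as well: here I would use that $\tau_{\mathcal{G}}=S_{\mathcal{G}}\circ\Omega$ for the Serre functor $S_{\mathcal{G}}$ of $\underline{\Gprj}\La$ (as in the proof of Theorem~\ref{tranGprj}) and that $\Omega$ is an auto-equivalence, so that $(iii)$ forces $S_{\mathcal{G}}$ to fix every indecomposable up to isomorphism, and then leverage the Krull-Schmidt structure to produce a natural isomorphism $\tau_{\mathcal{G}}\simeq\Omega$ along which the duality of Theorem~\ref{tranGprj} can be transported.
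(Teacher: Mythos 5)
Your proposal is correct and follows exactly the route the paper intends: the corollary is an immediate consequence of Theorem \ref{tranGprj} combined with condition $(iii)$ of Theorem \ref{CHArecterzationSemi}, extended to arbitrary $G$ by additivity and the vanishing of both sides on projectives. The extra remarks on naturality go slightly beyond what the statement requires but do no harm.
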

Since $\mmod ( \underline{\rm{Gprj}} \mbox{-} \Lambda)$ is a Frobenius category, so global dimension  $\mmod (\underline{\rm{Gprj}} \mbox{-} \Lambda)$ is finite if and only if $\mmod (\underline{\rm{Gprj}} \mbox{-} \Lambda)$ is  a semisimple abelian category. So in above theorem we completely describe when $\mmod ( \underline{\rm{Gprj}} \mbox{-} \Lambda)$ has a finite global dimension.

\begin{remark} \label{RemaAUS}
Auslander and Reiten in \cite{AR1} investigated when the global dimension of  finitely presented module over additive category $\underline{\rm{mod}} \mbox{-} \La$, $\mmod (\underline{\rm{mod}} \mbox{-} \La)$, is zero, or equivalently, to be semisimple. In \cite [Theorem 10.7]{AR1} it is proved that global dimension of $\mmod (\underline{\rm{mod}} \mbox{-} \La)$ is zero if and only if $\La$ is a Nakayama algebra of loewy length less than of 3.  The overlap of their result with ours is when $\La$ to  be a self-injective algebra. Based on these results this question can be asked  whether $\La$ is of finite $\rm{CM}$-type when
 $ \mmod (\underline{\rm{Gprj}} \mbox{-} \Lambda)$ is semisimple.
\end{remark}

In the following let us list  some examples satisfying the conditions of Theorem \ref{CHArecterzationSemi}.
\begin{example}\label{ExamplSimi}
\begin{itemize}
\item[$(1)$] Let $\La$ be a self-injective Nakayama algebra with loewy length less than of 3, see Remark \ref{RemaAUS}.
\item[$(2)$] If $\La$ is of finite $\rm{CM}$-type and satisfies the equivalent conditions of Theorem \ref{CHArecterzationSemi}, then $T_2(\La)$ is as well. In particular, for this case $T_2(\La)$ is of finite $\rm{CM}$-type and whose relative Auslander-Reiten translation is again the first syzygy functor. By repeating this  process let $T^1_2(\La)=T_2(\La),$ $T^n_2(\La)=T_2(T^{n-1}_2(\La))$, we can get many algebras satisfying the conditions. 

\item[$(3)$] Let $\La = kQ/I$ be a monomial algebra. We say that the algebra $\La$ is quadratic monomial provided that the ideal $I$ is generated by paths of length two. In \cite[Theorem 5.7]{CSZ}, it is proved that there is a triangle equivalence
$$\underline{\rm{Gprj}} \mbox{-} \Lambda \simeq \CT_{d_1} \times \CT_{d_2} \times \cdots \times \CT_{d_m}$$
where $\CT_{d_n}=(\mmod k^n, \sigma^n)$ for some natural number $n,$ and auto-equivalence $\sigma_n: \mmod k^n \rt \mmod k^n.$ We recall from \cite[Lemma 3.4]{C2} that for a semisimple abelian category $\CA$ and an auto-equivalence $\Sigma$ on $\CA,$ there is a unique triangulated structure on $\CA$ with $\Sigma$ as the translation functor. So by the equivalence a quadratic monomial algebra holds the conditions, as for each $m$, $\mmod (\mmod k^m)$ is semisimple.
\item[$(4)$] Let $\La$ be a simple gluing algebra of $A$ and $B$. Please see the  papers \cite{Lu1} and \cite{Lu2} for two different ways of gluing algebras. In these two mentioned papers, for two types of gluing, is proved that $\underline{\rm{Gprj}} \mbox{-} \Lambda\simeq\underline{\rm{Gprj}} \mbox{-} A \coprod \underline{\rm{Gprj}}\mbox{-} B$. Therefore, if $\mmod (\underline{\rm{Gprj}} \mbox{-} A)$ and $\mmod (\underline{\rm{Gprj}} \mbox{-} B)$ are semisimple, then $\mmod( \underline{\rm{Gprj}} \mbox{-} \Lambda)$ so is. For example, cluster-tilted algebras of type $\mathbb{A}_n$ and endomorphism algebras of maximal rigid objects of cluster tube $\mathcal{C}_n$ can be built as simple gluing algebras satisfying the condition of being semisimple, see \cite[ Corollary 3.21 and 3.22]{Lu2} .
\item [$(5)$]By Theorem 3.24 in \cite{AHV}, if $\Lambda$ and $\Lambda'$ are derived equivalence, then $\underline{\rm{Gprj}} \mbox{-} \Lambda \simeq \underline{\rm{Gprj}} \mbox{-} \Lambda'.$ Therefore, the property of being semisimple can be preserved under derived equivalence.
\end{itemize}

\end{example}

\section*{Acknowledgments}
The most of this work was carried out during a post-doc visit of the author in the university of Picardie. The author was supported by  "bourse du gouvernement fran\c caisthe  embassy" for his visit.
He is deeply indebted to Professor Alexander Zimmermann for his kind hospitality, inspiration and continuous encouragement. The author  would like to thank  Professor Henning Krause for introducing him the reference \cite{GLS} which motivated Theorem \ref{1-Gortrans}. He is also thankful to Dr. Mohammad Hossein Keshavarz for helping him to present this manuscript  better.

\end{document}